\documentclass[12pt]{amsart} 
\usepackage{verbatim, latexsym, amssymb, amsmath,color, mathrsfs}
\usepackage{enumitem}
\usepackage{epsfig}
\usepackage{xcolor}

\usepackage{stmaryrd}

\makeatletter
\setcounter{tocdepth}{3}

\renewcommand{\tocsection}[3]{%
  \indentlabel{\@ifnotempty{#2}{\bfseries\ignorespaces#1 #2\quad}}\bfseries#3}
\renewcommand{\tocsubsection}[3]{%
  \indentlabel{\@ifnotempty{#2}{\ignorespaces#1 #2\quad}}#3}

\newcommand\@dotsep{4.5}
\def\@tocline#1#2#3#4#5#6#7{\relax
  \ifnum #1>\c@tocdepth 
  \else
    \par \addpenalty\@secpenalty\addvspace{#2}%
    \begingroup \hyphenpenalty\@M
    \@ifempty{#4}{%
      \@tempdima\csname r@tocindent\number#1\endcsname\relax
    }{%
      \@tempdima#4\relax
    }%
    \parindent\z@ \leftskip#3\relax \advance\leftskip\@tempdima\relax
    \rightskip\@pnumwidth plus1em \parfillskip-\@pnumwidth
    #5\leavevmode\hskip-\@tempdima{#6}\nobreak
    \leaders\hbox{$\m@th\mkern \@dotsep mu\hbox{.}\mkern \@dotsep mu$}\hfill
    \nobreak
    \hbox to\@pnumwidth{\@tocpagenum{\ifnum#1=1\bfseries\fi#7}}\par
    \nobreak
    \endgroup
  \fi}
\AtBeginDocument{%
\expandafter\renewcommand\csname r@tocindent0\endcsname{0pt}
}
\def\l@subsection{\@tocline{2}{0pt}{2.5pc}{5pc}{}}
\makeatother

\usepackage{geometry}
\geometry{hmargin=3.2cm,vmargin=2.3cm}

\usepackage{hyperref}

\parindent=2em
\parskip=1em

\DeclareMathOperator{\End}{End}

\DeclareMathOperator{\spt}{spt}
\DeclareMathOperator{\Id}{Id}
\DeclareMathOperator{\diam}{diam}

\DeclareMathOperator{\Vol}{Vol}
\DeclareMathOperator{\length}{length}
\DeclareMathOperator{\Area}{Area}

\DeclareMathOperator{\dist}{dist}
\DeclareMathOperator{\injrad}{injrad}

\DeclareMathOperator{\Lip}{{Lip}}
\DeclareMathOperator{\Jac}{Jac}
\DeclareMathOperator{\spherevol}{SphereVol}

\DeclareMathOperator{\id}{id}
\DeclareMathOperator{\set}{set}
\DeclareMathOperator{\tr}{Tr}

\DeclareMathOperator{\dvol}{dvol}

\newtheorem{theo}{Theorem}[section]

\newtheorem{lemme}[theo]{Lemma}
\newtheorem{definition}[theo]{Definition}
\newtheorem{coro}[theo]{Corollary}

\theoremstyle{definition}
\newtheorem{remarque}[theo]{Remark}

\newtheorem{question}{Question}

\begin{document}
\title[Spherical volume and spherical Plateau problem]
{Spherical volume and spherical Plateau problem}

\author{Antoine Song}
\address{California Institute of Technology\\ 177 Linde Hall, \#1200 E. California Blvd., Pasadena, CA 91125}
\email{aysong@caltech.edu}

\maketitle

\begin{abstract} 
Given a closed oriented manifold or more generally a group homology class, we introduce the spherical Plateau problem, which is a variational problem corresponding to a topological invariant called the spherical volume.
In principle, its solutions should be realized by minimal surfaces in quotients of spheres. 
We explain that in many geometrically interesting cases, those solutions are essentially unique.  
We start with a  review of the Ambrosio-Kirchheim theory of metric currents, and the barycenter map method developed by Besson-Courtois-Gallot. 
Then, we outline the following applications:
\begin{enumerate}
\item the intrinsic uniqueness of spherical Plateau solutions for negatively curved, locally symmetric, closed oriented manifolds,
\item  the intrinsic uniqueness of spherical Plateau solutions for all 3-dimensional closed oriented manifolds, 
\item the construction of higher-dimensional analogues of hyperbolic Dehn fillings. 
\end{enumerate}
We also propose some open questions. 

\end{abstract}


\tableofcontents

\section*{Introduction}
The classical \emph{Plateau problem}, a fundamental question in Differential Geometry, concerns the existence and regularity of surfaces of least area spanning a given boundary contour inside the $3$-dimensional Euclidean space. The term ``Plateau problem'' has been extended to encompass any situation where the objective is to construct and study ``\emph{minimal surfaces}'', which are minimizers of a volume or area functional subject to  topological or geometric constraints.
 As a concrete example, consider a bounded Riemannian manifold $(N,g)$ of finite or infinite dimension, and fix an $n$-dimensional integer homology class $h\in H_n(N;\mathbb{Z})$ where $n$ is a positive integer. Roughly speaking, the ``volume'' or ``area'' of this homology class is defined as follows:
$$\Area(h):= \inf\{\Area(C);\quad \text{$C$ is a cycle in $N$ representing $h$}\}.$$
Here cycles can be intuitively understood as generalized $n$-submanifolds of $N$.
Let $C_i$ be a sequence of cycles  in $N$ representing $h$, which is minimizing in the sense that
$$\lim_{i\to \infty} \Area(C_i) = \Area(h).$$
Thanks to adequate compactness results, a subsequence $C_{i_j}$ converges to some limit space $C_\infty$ called \emph{Plateau solution}. 
The properties of this limit space, such as regularity, uniqueness, and geometric structure, are the main focus of the Plateau problem. In some way, this space $C_\infty$ can be considered as an ``optimal  geometric representative'' of the homology class $h$ of $(N,g)$. Uniqueness of Plateau solutions holds only in exceptional situations, and this paper is about a natural Plateau problem in infinite dimension, for which interesting uniqueness results hold or are conjectured to be true.

A central question in the study of  Plateau problems is to define the concepts of ``cycle'', ``area'', ``volume'' and ``convergence''. We will rely on  the framework of \emph{integral currents} in Geometric Measure Theory, as it  provides the most far-reaching existence and regularity results  so far, though there are other possible choices. In this context, the Plateau problem has been thoroughly studied for finite dimensional manifolds $N$:  in particular 
if the Riemannian manifold $(N,g)$ is finite dimensional and closed, any $k$-dimensional integer homology class $h$ of $N$ admits an area-minimizing integral current representative $C_\infty$ in $N$ with area equal to $\Area(h)$, which is smooth outside of a codimension two subset (this object is a ``generalized $k$-dimensional minimal surface'').
This major result follows from the successive works of De Giorgi, Federer-Fleming, Allard, Almgren, and many others. See \cite[Theorem 9.6]{FF60} for the existence statement and the surveys \cite{Ambrosio16,DeLellis16} for the partial regularity problem.

However when the underlying manifold $N$ is infinite-dimensional and hence not locally compact, the situation becomes more challenging. Efforts have been made to extend the theory of currents to arbitrary complete metric spaces, which started with Ambrosio-Kirchheim's article \cite{AK00}. This theory has since been applied, extended, and revisited by many researchers, including Lang \cite{Lang11}, Ambrosio-Schmidt \cite{AS13}, Wenger \cite{Wenger07,Wenger11,Wenger14}, Sormani-Wenger \cite{SW11}, Ambrosio-De Lellis-Schmidt \cite{ADLS18} etc. It is this extended  theory which allows us  to define and construct Plateau solutions in general. More concretely, if $(N,g)$ is an infinite-dimensional complete  Riemannian manifold with finite diameter and $h\in H_n(N;\mathbb{Z})$, a cycle in $(N,g)$ is by definition a boundaryless  integral current with compact support in $(N,g)$ in the sense of \cite{AK00}. Its area is given by the notion of mass of an integral current \cite{AK00}. Given a minimizing sequence of cycles $C_i$ representing $h$, by Wenger's compactness theorem \cite{Wenger11}, $C_i$ subsequentially converges in the intrinsic flat topology to an integral  
current space $C_\infty$ \cite{SW11}. An integral current space is roughly speaking determined by an underlying metric space $(X,d)$ and an integral current $S$ in the completion of $(X,d)$. Any such $C_\infty$ is called a Plateau solution for $h$.

Inspired by earlier works of Besson-Courtois-Gallot \cite{BCG91,BCG95, BCG96}, we consider the \emph{spherical Plateau problem}, a natural infinite-dimensional Plateau problem at the interface of geometric measure theory, geometric group theory and topology. The ambient manifold $(N,g)$ is of the following form: 
$$(N,g)=( {S^\infty}/\lambda_\Gamma(\Gamma), \mathbf{g}_{\mathrm{Hil}})$$ where $\Gamma$ is a countable group, $ {S^\infty}$ is the unit sphere in $\ell^2(\Gamma)$, $\Gamma$ acts on $ {S^\infty}$ by the left regular representation 
$\lambda_\Gamma:\Gamma\to \End(\ell^2(\Gamma))$ and $ {S^\infty}/\lambda_\Gamma(\Gamma)$ is endowed with the quotient of the round metric $\mathbf{g}_{\mathrm{Hil}}$. Given $h\in H_n( {S^\infty}/\lambda_\Gamma(\Gamma);\mathbb{Z})$, the invariant $\Area(h)$ is called the spherical volume of $h$ and denoted by
$$\spherevol(h):= \inf\{\Area(C);\quad \text{$C$ is a cycle in $ {S^\infty}/\lambda_\Gamma(\Gamma)$ representing $h$}\}.$$
Versions of that quantity were first defined by Besson-Courtois-Gallot \cite{BCG91,BCG95, BCG96}. 
This invariant is closely related to many other well-known invariants: there is a chain of inequalities connecting  simplicial volume,  spherical volume, minimal volume entropy and minimal volume for the Ricci curvature, as explained in the survey of Kotschick \cite[Section 2]{Kot11}. A more detailed definition is given in Section \ref{definition spp}.
The integral current spaces $C_\infty$ corresponding to $h$, obtained by the recipe described in the previous paragraph, are called \emph{spherical Plateau solutions} for $h$.

\textbf{Aim.} The purpose of this note is to introduce the spherical Plateau problem, and show that for many  special choices of $\Gamma$ and $h\in H_n( {S^\infty}/\lambda_\Gamma(\Gamma);\mathbb{Z})$, the spherical Plateau solutions are in a sense unique and geometrically meaningful. 
Throughout the text, we propose several questions related to the spherical volume and the spherical Plateau problem.

The style of this article is rather informal, nevertheless we try to provide a rather comprehensive  review of the main technical tools.

We will outline the proofs of the three results stated below\footnote{Most of the content in this survey is based on an earlier unpublished preprint \cite{Antoine22}. Theorem \ref{aladd} is proved in details in \cite{Antoine23b}. See also Cosmin Manea's master's thesis \cite{Manea23}.}, which we view as a proof-of-concept justifying a general  study of the spherical Plateau problem.

\textbf{Locally symmetric manifolds of negative curvature.}
The first result pertains to locally symmetric spaces of rank one. Let $(M,g_0)$ be a closed oriented locally symmetric $n$-manifold with negative curvature, and let $\Gamma$ be its fundamental group. Let $h(g_0)$ be the volume entropy of $(M,g_0)$, whose definition is recalled in (\ref{def entropy}). The quotient space $ {S^\infty}/\lambda_\Gamma(\Gamma)$ is a classifying space for $\Gamma$ and the fundamental class $[M]\in H_n(M;\mathbb{Z})$ determines a unique homology class $h_M\in H_n( {S^\infty}/\lambda_\Gamma(\Gamma); \mathbb{Z}) = H_n(M;\mathbb{Z})$. A key step in the proof of the celebrated volume entropy inequality of Besson-Courtois-Gallot \cite{BCG95} was the computation of $\spherevol(h_M)$ with the \emph{barycenter map} method, see Section \ref{section:uniqueness}. The barycenter map will be a central tool in this paper too.

A closed oriented Riemannian $n$-manifold $(W, g_W)$ admits a natural integral current structure $\llbracket 1_W\rrbracket $ induced by its fundamental class $[W]\in H_n(W;\mathbb{Z}) $.
A spherical Plateau solution $C_\infty$ for $h_M$ is called ``intrinsically isomorphic'' to $(W, g_W)$ if the underlying metric space of $C_\infty$
is intrinsically isometric to $(W,g_W)$
via a map sending the current structure of $C_\infty$ to $\llbracket 1_W\rrbracket $, see Definition \ref{definition:intrinsic isomorphism}.

\begin{theo} \label{aladd}
If $(M,g_0)$ is a closed, oriented, locally symmetric manifold of dimension $n\geq 3$, with negative curvature between $-4$ and $-1$, then any spherical Plateau solution for $h_M$ is intrinsically isomorphic to $(M,\frac{h(g_0)^2}{4n} g_0)$.
\end{theo}
This intrinsic uniqueness result leads to the formulation of a new kind of ``area rigidity'' property for the regular representation of $\pi_1(M)$ (see Corollary \ref{rigidity}), and a \emph{Rigidity Conjecture} (see Question \ref{question:unique}).

The proof of Theorem \ref{aladd} has also been applied to study the stability of the entropy inequality of Besson-Courtois-Gallot \cite{BCG95}, see \cite{Antoine23b}. 
Similar questions for higher rank locally symmetric manifolds (even the computation of the spherical volume) are still wide open, because the barycenter map method does not work so well there \cite{CF02}.
Motivated by Theorem \ref{aladd}, it is suggested in  \cite{CN23} to inspect the spherical Plateau problem in the context of Cannon's conjecture.

\textbf{Closed oriented $3$-manifolds.}
Our next result gives another large pool of examples where the spherical Plateau solutions are almost explicit. Let $M$ be a closed oriented $3$-manifold and $\Gamma$ its fundamental group. The fundamental class of $M$ induces a homology class $h_M\in H_3( {S^\infty}/\lambda_\Gamma(\Gamma);\mathbb{Z})$. By the Geometrization theorem \cite{KL08,MT14}, $M$ can be uniquely written as a connected sum $M=M_1\#... \#  M_k$ where each $M_j$ is a closed oriented prime $3$-manifold which is canonically divided into two pieces along some tori: $M_j = M_{j,\mathrm{hyp}} \cup M_{j,\mathrm{Seif}}$ where $M_{j,\mathrm{hyp}}$ carries a finite-volume complete hyperbolic metric, and $ M_{j,\mathrm{Seif}}$  is a union of Seifert manifolds. The disjoint union of the hyperbolic pieces $ M_{j,\mathrm{hyp}}$ endowed with their hyperbolic metrics is denoted $(M_{\mathrm{hyp}}, g_\mathrm{hyp})$.  
\begin{theo}
  If $M$ is closed oriented $3$-manifold with hyperbolic part denoted by $(M_{\mathrm{hyp}},g_\mathrm{hyp})$, then any spherical Plateau solution for $h_M$ is intrinsically isomorphic to $(M_{\mathrm{hyp}},\frac{1}{3} g_\mathrm{hyp})$.
\end{theo}
Hence the hyperbolic part $(M_{\mathrm{hyp}}, g_\mathrm{hyp})$, which is canonically determined by $M$, emerges as the solution of the spherical Plateau problem. The fundamental nature of the hyperbolic part $(M_{\mathrm{hyp}}, g_\mathrm{hyp})$ had previously appeared in other contexts including the Ricci flow, the Yamabe invariant, the simplicial volume, the volume entropy, etc. For instance, the normalized Ricci flow starting at any Riemannian metric on $M$ converges as time goes to infinity to $(M_{\mathrm{hyp}},g_\mathrm{hyp})$ in a multi-pointed Gromov-Hausdorff sense \cite{KL08}. 

\textbf{Plateau Dehn fillings.}
The two previous theorems show that, to some extent, spherical Plateau solutions form a class of spaces which naturally generalizes locally symmetric manifolds of negative curvature. Our last result explores that interpretation further in the context of Dehn fillings. Recall that in dimension $3$, there is a fundamental feature of hyperbolic geometry discovered by Thurston, called hyperbolic Dehn fillings \cite{Thurston97}: in its simplest version, it states that given any finite volume non-compact hyperbolic $3$-manifold $M$, there is a sequence of finite volume hyperbolic manifolds $M_i$ with volume strictly less than that of $M$, and converging geometrically to $M$. In higher dimensions, due to the finiteness theorem of H.C. Wang \cite{Wang72} for locally symmetric negatively curved manifolds, such a phenomenon is impossible in the smooth setting. Nevertheless, as we will see below, by enlarging the set of locally symmetric negatively curved manifolds to the set of spherical Plateau solutions, we do have such an accumulation phenomenon in all dimensions higher than $2$. We start with the higher dimensional $\mathrm{CAT}(0)$ Dehn fillings constructed by Fujiwara-Manning in \cite{FM10}. These fillings are denoted by $M(T_1,...,T_m)$ and are obtained by closing the cusps of a finite volume non-compact hyperbolic $n$-manifold $(M,g_{\mathrm{hyp}})$ with toral cusps. 
 The $T_i$ denote certain subtori in the $m$ cusps of $M$, which are assumed to have injectivity radius greater than $\pi$. 
Each $M(T_1,...,T_m)$ determines a unique homology class $h_{M(T_1,...,T_m)}$ in the corresponding spherical quotient $ {S^\infty}/\lambda_\Gamma(\Gamma)$ where $\Gamma:=\pi_1(M(T_1,...,T_m))$. 
The behavior of Plateau Dehn fillings, namely the spherical Plateau solutions for $h_{M(T_1,...,T_m)}$, is completely analogous to the $3$-dimensional case of hyperbolic Dehn fillings \cite{Gromov81}:
\begin{theo}
We have 
$$\spherevol(h_{M(T_1,...,T_m)}) < \Vol(M,\frac{(n-1)^2}{4n}g_{\mathrm{hyp}}).$$
Moreover for any $\epsilon>0$, if the injectivity radii of $T_1,...,T_m$ are sufficiently large, then 
$$\spherevol(h_{M(T_1,...,T_m)}) > \Vol(M,\frac{(n-1)^2}{4n}g_{\mathrm{hyp}}) - \epsilon$$
and any spherical Plateau solution for $h_{M(T_1,...,T_m)}$ is $\epsilon$-close, in the intrinsic flat topology, to an integral current space intrinsically isomorphic to $(M,\frac{(n-1)^2}{4n}g_{\mathrm{hyp}})$.
\end{theo}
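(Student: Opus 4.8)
The plan is to split the statement into the two numerical inequalities and the asymptotic rigidity of Plateau solutions, to deduce the inequalities from the behaviour of the minimal volume entropy under Dehn filling, and to obtain the rigidity by a compactness argument powered by the Besson--Courtois--Gallot barycenter map.

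For the inequalities, the starting observation is that each Fujiwara--Manning filling $M(T_1,\dots,T_m)$ is a closed smooth manifold admitting a locally $\mathrm{CAT}(0)$ metric, hence is aspherical and is a classifying space for $\Gamma:=\pi_1(M(T_1,\dots,T_m))$; so $\spherevol(h_{M(T_1,\dots,T_m)})$ is the spherical volume of the closed manifold $M(T_1,\dots,T_m)$, and by the Besson--Courtois--Gallot identity between spherical volume and minimal volume entropy (see \cite{BCG95} and Theorem \ref{theorem:bcg}) it equals $(4n)^{-n/2}$ times the minimal volume entropy of $M(T_1,\dots,T_m)$. On the other side, since $g'=\tfrac{(n-1)^2}{4n}g_{\mathrm{hyp}}$ and the minimal volume entropy of the finite-volume hyperbolic manifold $M$ equals $(n-1)^n\Vol(M,g_{\mathrm{hyp}})$ by the finite-volume entropy rigidity theorem \cite{BCG95,BCG96}, we have $\Vol(M,g')=(4n)^{-n/2}(n-1)^n\Vol(M,g_{\mathrm{hyp}})$, i.e. $(4n)^{-n/2}$ times the minimal volume entropy of $M$. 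Thus the first two assertions reduce to showing that the minimal volume entropy of $M(T_1,\dots,T_m)$ is strictly below that of $M$, and converges to it as $\min_i\injrad(T_i)\to\infty$. These I would establish by adapting to the Fujiwara--Manning fillings the three-dimensional argument of Souto \cite{Souto01} (compare Pieroni \cite{Pieroni19}): a filling adds relations only through amenable subgroups, hence never increases the minimal volume entropy, and a minimizing sequence of metrics realizing equality would force the core subsets to collapse in a way incompatible with a nontrivial filling, giving strictness; the convergence uses that, as $\min_i\injrad(T_i)\to\infty$, the fillings degenerate geometrically to $M$ (this geometric convergence being the basis of Fujiwara--Manning's construction and conjecture \cite{FM10,FM11}) while the filling regions carry a controlled, vanishing weight. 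Equivalently one may build the required competitor cycles in $\mathcal{S}^*(\Gamma)/\Gamma$ directly, applying the Besson--Courtois--Gallot natural map to a near-optimal metric on $M(T_1,\dots,T_m)$.

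For the asymptotic rigidity, I would argue by contradiction. Suppose there are $\epsilon_0>0$, fillings $M(T_1^{(k)},\dots,T_m^{(k)})$ with $\min_i\injrad(T_i^{(k)})\to\infty$, and Plateau solutions $C_k$ for $h_{M(T_1^{(k)},\dots,T_m^{(k)})}$ whose intrinsic flat distance to the integral current space $(M,g')$ is at least $\epsilon_0$. By the two inequalities just proved, $\Vol(C_k)=\spherevol(h_{M(T^{(k)})})\to\Vol(M,g')$. Using the uniform geometric control available for the $C_k$ and the compactness theory for integral currents in metric spaces \cite{AK00,Wenger11}, \cite[Theorem 4.19]{SW11}, a subsequence of $\{C_k\}$ converges in the intrinsic flat topology to an $n$-dimensional integral current space $C_\infty$ with $\Vol(C_\infty)\le\Vol(M,g')$. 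It then suffices to show that $C_\infty$ is intrinsically equivalent to $(M,g')$, contradicting the separation hypothesis. Here I would rerun the barycenter-map rigidity underlying Theorems \ref{uniqueness1} and \ref{uniqueness2}, but in a form adapted to the degenerating family: the groups $\Gamma_k=\pi_1(M)/N_k$ (with $N_k$ the normal closure of the filling slopes) degenerate to $\pi_1(M)$ as the injectivity radii grow, and away from the core subsets the filled manifolds look more and more like the hyperbolic manifold $M$, so the barycenter maps of the cycles $C_k$ converge to a degree-one map $C_\infty\to (M,g')$ with Jacobian at most $1$ almost everywhere; combined with $\Vol(C_\infty)\le\Vol(M,g')$, the equality case of the Besson--Courtois--Gallot Jacobian estimate then forces $C_\infty$ to be intrinsically equivalent to $(M,g')$.

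The main obstacle is this last step. Theorems \ref{uniqueness1} and \ref{uniqueness2} concern a fixed pair $(\Gamma,h)$, whereas here both the group and the homology class vary along the sequence and degenerate onto the non-compact finite-volume manifold $M$, which carries no group-homology fundamental class; one therefore needs a genuine \emph{stability of Plateau solutions under degeneration of the filling}. Three points require care: making precise how the limit current $C_\infty$ inherits a degree-one relation to $(M,g')$ through the surjections $\pi_1(M)\twoheadrightarrow\Gamma_k$; controlling the contribution of the core and filling regions to the cycles $C_k$ and showing it vanishes in the limit --- this is exactly where $\min_i\injrad(T_i^{(k)})\to\infty$ and the geometry of the Fujiwara--Manning fillings \cite{FM10} enter; and ruling out any loss of mass in the intrinsic flat limit, so that $\Vol(C_\infty)\le\Vol(M,g')$ is attained with Jacobian one and the rigidity machinery of Theorems \ref{uniqueness1}, \ref{uniqueness2} applies.
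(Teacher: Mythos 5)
Your reduction of the numerical inequalities to minimal volume entropy has a genuine gap that undermines both bounds. You assert that $\spherevol(h_{M(T_1,\dots,T_m)})$ \emph{equals} $(4n)^{-n/2}$ times the minimal volume entropy of $M(T_1,\dots,T_m)$. That equality is the BCG entropy rigidity theorem and it is only known for closed \emph{locally symmetric} negatively curved manifolds; for a general aspherical space (and $M(T_1,\dots,T_m)$ is a pseudomanifold, not a manifold, unless every $k_j=1$) the only available relation is the one-sided inequality (\ref{bcgse}), $\spherevol(h)\leq \frac{h(g)^n}{2^n n^{n/2}}\Vol(M,g)$. This one-sided bound suffices for the upper bound $\spherevol(h)<\Vol(M,g')$ \emph{provided} you first prove that the minimal volume entropy of the filling drops strictly — but your argument for that strictness ("core subsets would collapse in a way incompatible with a nontrivial filling") is a sketch, not a proof, and you are tacitly using Souto/Pieroni arguments outside the $3$-dimensional setting where they are established. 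More seriously, the one-sided inequality gives you \emph{no} lower bound $\spherevol(h)>\Vol(M,g')-\epsilon$: even if the minimal volume entropy of the fillings converges to $(n-1)^n\Vol(M,g_{\mathrm{hyp}})$, this says nothing about $\spherevol$ from below. The lower bound requires a mass lower bound for arbitrary competitor cycles in $\mathcal{S}^*(\Gamma)/\Gamma$, which the paper obtains by running the barycenter map with target the Fujiwara--Manning filling $(M(T^p_1,\dots,T^p_m),\mathbf{d}_p)$ together with the crucial Lemma~\ref{bahoui} — one can choose $\mathbf{d}_p$ so that a geodesic ball $\mathcal{B}_R$ is isometric to a large hyperbolic ball in $M$ — and the Jacobian bound of Lemma~\ref{astuce4} on $\mathcal{B}_{R/2}$. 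Nothing in your proposal supplies this crucial lower bound.

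The paper's proof of the strict upper bound is also genuinely different from what you propose. It does not pass through minimal volume entropy at all. It composes Poisson embeddings $\mathrm{Poisson}_c:(M,g')\to S_2(\tilde M,g_{\mathrm{hyp}})/\pi_1(M)$ with the $1$-Lipschitz map $\Theta:\mathcal{S}(\pi_1 M)/\pi_1 M\to\mathcal{S}(\Gamma)/\Gamma$ induced by the surjection $\theta:\pi_1(M)\twoheadrightarrow\Gamma$, closes the cusps at controlled volume cost to obtain admissible competitors for $h$, and then derives strictness by contradiction: if equality held, $\Theta\circ\mathrm{Poisson}_c$ would be almost isometric with uniformly bounded second derivatives, but a short geodesically-curved loop $a\subset M$ representing a nontrivial element of $\ker\theta$ would map to an almost arclength-parametrized closed curve in $\mathcal{S}(\Gamma)/\Gamma$ which is homotopically trivial (hence lifts isometrically to the Hilbert unit sphere) with arbitrarily small length and uniformly bounded second derivative — which is impossible. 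Your proposal does not contain this idea, which is the actual engine of the strict inequality.

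For the asymptotic rigidity you correctly identify the difficulty — the group and class vary along the degenerating sequence — and you sketch a contradiction-plus-compactness scheme that is compatible in spirit with the paper. But the paper resolves the difficulty concretely and not by a separate "stability" theorem: each barycenter map has target $M(T^p_1,\dots,T^p_m)$ (not $M$), and the nested exhaustion $N^p_d:=M(T^p_1,\dots,T^p_m)\setminus\mathcal{B}_{1/d}$ from Lemma~\ref{bahoui}, together with the observation $M=\bigcup_d\mathcal{B}_{1/d}$, lets the Step 1--Step 4 machinery of Theorem~\ref{uniqueness1} run unchanged, replacing Lemma~\ref{astuce} by Lemma~\ref{astuce4}. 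Your proposal recognizes the obstacle but offers no substitute for this mechanism, and in particular your claim that the barycenter maps converge to a "degree-one map $C_\infty\to(M,g')$" skips the nontrivial point that the maps' targets are the pseudomanifolds $M(T^p)$ rather than $M$ itself.
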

Fujiwara-Manning previously proved that the fillings $M(T_1,...,T_m)$ have simplicial volumes satisfying
$$\|M(T_1,...,T_m)\| \leq \|M\|$$ 
where $\|M\|$ is the simplicial volume of $M$, and conjectured that $\|M(T_1,...,T_m)\|$  converges to $\|M\|$ as the injectivity radius of the tori $T_i$ goes to infinity, but never attains the limit \cite[Conjecture 1.8, Question 1.9]{FM11}. The previous theorem settles  the spherical volume version of this conjecture.
For the construction of Einstein manifolds analogous to Dehn fillings, see \cite{Anderson06a,Bamler12}.

\textbf{Outline of the paper.}
In Section \ref{appendix a}, we give an overview of the theory of integral currents in metric spaces developed by Ambrosio-Kirchheim and others. Results are stated without proofs.

In Section \ref{appendix b}, we explain in details the barycenter map and reproduce the  proofs of some important properties. 
 For clarity, we focus only on the hyperbolic case.

In Section \ref{definition spp}, the spherical Plateau problem is defined, and basic properties of spherical cycles are discussed. 


In Section \ref{section:uniqueness}, 
we sketch Besson-Courtois-Gallot's computation of the spherical volume for hyperbolic manifolds. We briefly explain its close relation to the volume entropy inequality. We then outline the proof of our first uniqueness result, Theorem \ref{uniqueness1}.

In Section \ref{section:uniqueness2}, we indicate how to apply the Geometrization theorem for $3$-manifolds, and extend the arguments of Theorem \ref{uniqueness1} to prove the uniqueness statement for $3$-manifolds, Theorem \ref{uniqueness2}.

In Section \ref{section:uniqueness3},  we describe the idea behind Plateau Dehn fillings and Theorem \ref{dehn fillings}, which can be viewed as an asymptotic uniqueness result.

\subsection*{Acknowledgments} 
I am grateful to G\'{e}rard Besson, Gilles Courtois, John Lott,  Ian Agol, Jason Manning, Camillo De Lellis, Xin Zhou, Hyun Chul Jang, Luca Spolaor, Tamunonye Cheetham-West, Alexander Nolte, Luca Di Cerbo, Richard Bamler, Song Sun, St\'{e}phane Sabourau,  Alexander Nabutovsky,   Ben Lowe, Shi Wang, Cosmin Manea and Zhenhua Liu for many insightful and stimulating discussions during the writing of this article.

A.S. was partially supported by NSF grant DMS-2104254. This research was conducted during the period A.S. served as a Clay Research Fellow.

\section{Preliminaries on metric integral currents} \label{appendix a}

The classical notion of integral currents in finite dimensional manifolds \cite{Federer69} 
exhibits simultaneously several properties explaining the success of the theory: they are mild generalizations of submanifolds, they satisfy strong compactness results and area-minimizers are smooth submanifolds outside of a small singular set \cite{Almgren00,DLS14}. Building on earlier ideas of De Giorgi and Gromov, Ambrosio-Kirchheim \cite{AK00,AK00b} initiated an extension of the theory to complete metric spaces, including infinite-dimensional Riemannian manifolds. Further developments led to  Wenger's compactness result \cite{Wenger11} which  
will be essential to define spherical Plateau solutions (Subsection \ref{subsection:setup}). In this section, we review the definitions and results developed in \cite{AK00,AK00b,Wenger07,Wenger11,SW11}.

\subsection{Basic definitions}

Let $(E,d)$ be a complete metric space\footnote{The metric $d$ is allowed to take $\infty$ as value.}. Let $n\geq0$, and let $\mathcal{D}^n(E)$ be the set of $(n+1)$-tuples $(f,\pi_1,...,\pi_n)$ of Lipschitz functions on $E$ with $f$ bounded. As a suggestive reference to the finite dimensional theory of currents, $(f,\pi_1,...,\pi_n)$ is also usually denoted by $fd\pi_1\wedge...,\wedge d\pi_n$. Metric currents in the sense of Ambrosio-Kirchheim are a flexible generalization of oriented submanifolds:

\begin{definition} \label{definition current} \cite{AK00}
An $n$-dimensional \emph{metric current} in $(E,d)$ is a multi-linear functional on $\mathcal{D}^n(E)$ such that
\begin{enumerate}
\item If $\pi_i^j$ converges pointwise to $\pi_i$ as $j\to \infty$, and if $\sup_{i,j} \Lip(\pi_i^j) <\infty$, then
$$\lim_{j\to \infty} T(fd\pi_1^j\wedge...,\wedge d\pi_n^j )= T(fd\pi_1\wedge...,\wedge d\pi_n).$$
\item If $\{x\in E; f(x)\neq 0\}$ is contained in the union $\bigcup_{i=1}^n B_i$ of Borel sets $B_i$ and if $\pi_i$ is constant on $B_i$ then
$$T(fd\pi_1\wedge...,\wedge d\pi_n) = 0.$$
\item There exists a finite Borel measure $\mu$ on $E$ such that
$$|T(fd\pi_1\wedge...,\wedge d\pi_n)| \leq \prod_{i=1}^n\Lip(\pi_i) \int_E|f| d\mu$$
for all $fd\pi_1\wedge...,\wedge d\pi_n\in \mathcal{D}^n(E)$.
\end{enumerate}
\end{definition}
The minimal Borel measure $\mu$ satisfying the above inequality is called the \emph{mass} of $T$ and denoted by $\|T\|$. The total mass of $T$ is defined as
$$\mathbf{M}(T) := \|T\|(E),$$
and should be thought of as the ``$n$-dimensional area'' of $T$. 
Currents in the sense of Ambrosio-Kirchheim \cite{AK00} have finite mass by definition (and we will only consider such currents here), but there is a variant of this theory due to Lang \cite{Lang11} which avoids the finite mass condition. 
The \emph{support} $\spt(T)$ of $T$ is the support of the measure $\|T\|$ in the usual sense. The \emph{canonical set} of $T$, called  $\set(T)$, is the collection of points in $E$ with a positive lower density:  
$$\set(T):=\{p\in E; \lim_{r\to 0^+} \|T\|(B(p,r)) r^{-n}>0\}.$$ 
In general, 
$$\set(T) \subset \spt(T)$$ and the inclusion is usually strict.
The \emph{boundary} $\partial T$ is defined by 
$$\partial T(fd\pi_1\wedge...,\wedge d\pi_{n-1}) :=T(1 df\wedge d\pi_1\wedge...,\wedge d\pi_n)$$ for all $fd\pi_1\wedge...,\wedge d\pi_{n-1}\in \mathcal{D}^{n-1}(E)$. The \emph{push-forward} of $T$ by a Lipschitz map $\psi$  from $E$ to another complete metric space $E'$ is given by 
$$\psi_\sharp T(fd\pi_1\wedge...,\wedge d\pi_n) := T(f\circ \psi d(\pi_1 \circ \psi) \wedge...\wedge d(\pi_n\circ \psi))$$ for all $fd\pi_1\wedge...,\wedge d\pi_{n}\in \mathcal{D}^n(E')$. There is also a notion of \emph{restriction} of $T$ to a Borel subset $A\subset E$:
$$(T\llcorner A)(fd\pi_1\wedge...,\wedge d\pi_n) := T(f\chi_Ad\pi_1\wedge...,\wedge d\pi_n)$$
where $\chi_A$ is the characteristic function of $A$ (the above is well-defined by an extension of the functional $T$).

\subsection{Rectifiable sets and integral currents}

We are mainly interested in integral currents, which roughly speaking are currents $T$ such that both $T$ and $\partial T$ are the push-forward of a countable union of elementary currents by Lipschitz maps. An elementary current is obtained as follows: consider $\theta\in L^1(A;\mathbb{N})$ where $A\subset \mathbb{R}^n$, then define the following current in $\mathbb{R}^n$: for all $fd\pi_1\wedge...,\wedge d\pi_n\in \mathcal{D}^n(\mathbb{R}^n)$,
$$\llbracket \theta \rrbracket (fd\pi_1\wedge...,\wedge d\pi_n) : = \int_{A} \theta fd\pi_1\wedge...,\wedge d\pi_n.$$
Integer rectifiable currents and integral currents enjoy the following characterizations \cite[Theorem 4.5, Theorem 8.6]{AK00}, which we will take as definitions: 
\begin{definition} \cite{AK00}
A current $T$ in $E$ is an $n$-dimensional \emph{integer rectifiable current} if and only if there are Lipschitz maps $\varphi_i : A_i \to E $ where $A_i\subset \mathbb{R}^n$ are precompact Borel measurable and have disjoint images by $\varphi_i$, and there are $\theta_i \in L^1(A_i;\mathbb{N})$ such that 
$$T = \sum_{i=1}^\infty (\varphi_i)_\sharp \llbracket \theta_i \rrbracket, \quad \text{and}\quad \|T\| = \sum_{i=1}^\infty \|(\varphi_i)_\sharp \llbracket \theta_i \rrbracket\|.$$
The pair $(\{\varphi_i : A_i\to E\},\{\theta_i\})$ is called a parametrization.

The current $T$ is an \emph{integral current} if and only if both $T$ and $\partial T$ are integer rectifiable currents.
\end{definition}

For instance, if $(M,g)$ is a complete oriented Riemannian manifold with compact boundary and finite volume, then it carries a natural $n$-dimensional integral current usually denoted by $\llbracket 1_M \rrbracket$, induced by ``integration on $M$''.

Recall that a Borel set $S\subset E$ is countable $\mathcal{H}^n$-rectifiable if there is a sequence of Lipschitz functions $\varphi_i:A_i\to E$ where $A_i\subset \mathbb{R}^n$ is Borel, such that 
\begin{equation} \label{rectifiable}
\mathcal{H}^n(S\setminus \bigcup_i \varphi_i(A_i)) = 0
\end{equation}
where $\mathcal{H}^n$ denotes the $n$-dimensional Hausdorff measure. 
It is proved in \cite[Theorem 4.6]{AK00} that if $T$ is an $n$-dimensional integral current with a parametrization $(\{\varphi_i\},\{\theta_i\})$, then
$$\mathcal{H}^n\big(\set(T) \setminus \bigcup_{i=1}^\infty \varphi_i(A_i) \cup \bigcup_{i=1}^\infty \varphi_i(A_i)\setminus \set(T)\big) = 0.$$
The canonical set $\set(T)$ is in particular a countably $\mathcal{H}^n$-rectifiable set in $E$ (contrarily to the support $\spt(T)$, in general). The functions $\theta_i$ determine a Borel function called the multiplicity function $\theta_T : E\to \mathbb{N}$, which is  well-defined $\mathcal{H}^n$-almost everywhere.
In the special case where $E$ is an infinite Riemannian dimensional manifold (i.e. locally modelled on a Hilbert space), it is shown in \cite[Theorem 9.5]{AK00} that
$$\|T\| = \theta_T \mathcal{H}^n \llcorner \set(T).$$
That intuitive formula needs a correction factor in the case of general Banach spaces.

\subsection{Weak and flat topology}
There are two fundamental notions of convergence for integral currents in a metric space: the weak and flat convergences. A sequence $\{T_m\}$ of $n$-dimensional integral currents in $E$ is said to converge \emph{weakly} to some current $T$ if for all $fd\pi_1\wedge...,\wedge d\pi_n\in \mathcal{D}^n(E)$,
$$\lim_{m\to \infty} T_m(fd\pi_1\wedge...,\wedge d\pi_n) =T(fd\pi_1\wedge...,\wedge d\pi_n).$$
In that case, an important result states that such a limit $T$ is also an integral current \cite[Theorem 8.5]{AK00}. Besides, a fundamental property of the mass which makes it so useful in Plateau problems is that it is lower semicontinuous with respect to weak converge, see paragraph below \cite[Definition 3.6]{AK00}: if $T_m$ converges weakly to $T$ then 
$$\mathbf{M}(T) \leq \liminf_{m\to \infty} \mathbf{M}(T_m).$$
The sequence $\{T_m\}$ is said to converge to $T$ \emph{in the flat topology} if there  are sequences $\{U_m\}, \{V_m\}$ of integral currents such that
$$T_m-T =  U_m +\partial V_m, \quad \lim_{m\to \infty } \mathbf{M}(U_m) =  \lim_{m\to \infty } \mathbf{M}(V_m) = 0.$$
Convergence in the flat topology implies convergence in the weak topology.  A partial converse 
is proved in \cite{Wenger07}.

\subsection{The area and coarea formulas} \label{subsection:area}

Next we recall some versions of two particularly useful tools, the area and coarea formulas for countably $\mathcal{H}^n$-rectifiable sets. In this paragraph we assume for simplicity that $(E,d)$ is a separable complete infinite-dimensional Riemannian manifold, embedded isometrically inside an $\ell^\infty$ space $Y$ by a Kuratowski embedding, since only that case is needed here. Note that an $\ell^\infty$ space is a $w^*$-separable dual space in the sense of \cite{AK00}. Given a Lipschitz map $\psi :A\to E$ where $A\subset \mathbb{R}^n$ is Borel, for almost every $y\in A$, $\psi$ is $w^*$-differentiable at $y$ with a $w^*$-differential called $wd_y\psi$ in the sense of \cite{AK00b}\cite[Section 9]{AK00}. The latter is a linear map from $T_y\mathbb{R}^n$ to $Y$.  For almost every $y\in A$, $wd_y\psi$ is of full rank; in that case the image $Q:=wd_y\psi(T_y\mathbb{R}^n)$ is called an approximate tangent $n$-plane at $p:=\psi(y)$. Such $Q$ is a linear $n$-plane inside $Y$, and in our case it is also a tangent $n$-plane of the manifold $E$. More generally, let $S\subset E$ be a countably $\mathcal{H}^n$-rectifiable set, with $\{\varphi_i:A_i\to E\}$ as in (\ref{rectifiable}). At $\mathcal{H}^n$-almost every $p\in S$, there are $i$ and $y\in A_i$ such that $\varphi_i(y)=p$ and an approximate tangent $n$-plane $Q$ at $p$ exists in the sense above.

Let $S$ be a countably $\mathcal{H}^n$-rectifiable set in $E$.
Given a Lispchitz map $g:S \to E'$ where $E'$ is another separable complete infinite-dimensional Riemannian  manifold embedded isometrically in an $\ell^\infty$ space $Y'$,  there is a well-defined nonnegative number for $\mathcal{H}^n$-almost every $z \in S$, called the Jacobian of $g$ and denoted by $\mathbf{J}_n(d^Sg_z)$, such that the following \emph{area formula} \cite[Theorem 8.2]{AK00b} holds for any  Borel function $\theta:S\to [0,\infty]$:
\begin{equation} \label{area form}
\int_{S}\theta(p) \mathbf{J}_n(d^Sg_p) d\mathcal{H}^n(p) = \int_{E'} \sum_{p\in S\cap g^{-1}(z)} \theta(p) d\mathcal{H}^n(z).
\end{equation}
More concretely, suppose that for $\{\varphi_i:A_i\to E\}$ as (\ref{rectifiable}), for $i$ and $y\in A_i$, $p:=\varphi_i(y)\in S$, and suppose that $\varphi_i$ is $w^*$-differentiable at $y$ with a $w^*$-differential of full rank, and the composition $g\circ \varphi_i : \mathbb{R}^n\to E'$ is $w^*$-differentiable at $y$ (all of this holds for almost every $y\in A_i$). Let 
$$Q := wd_y\varphi_i(T_y\mathbb{R}^n)$$ be the tangent $n$-plane at $p$. Then $g$ is tangentially differentiable at $p$ along $Q$ with tangential differential $d^{S}g_p : Q \to T_{g(p)} E'$ satisfying:  
$$d^{S}g_p = wd_y(g\circ \varphi_i) \circ (wd_y\varphi_i)^{-1}.$$
In our case where $E,E'$ are infinite-dimensional Riemannian manifolds, $Q$ and $T_{g(p)} E'$ are Hilbert linear spaces.
The Jacobian $\mathbf{J}_n(dg_p)$ is then equal to the absolute value of the usual Jacobian determinant of the linear map $d^{S}g_p$.
In particular when $g$ is $\lambda$-Lipschitz then 
$$\mathbf{J}_n(dg_z)\leq \lambda^n$$ as expected. Given a tangent $n$-plane $Q$ of $S$ at $p$ as above, we often use the following more classical notations:
$$dg\big|_Q := d^{S}g_p,$$
$$|\Jac g\big|_Q| := \mathbf{J}_n(d^Sg_p).$$

Let $S$ be a countably $\mathcal{H}^n$-rectifiable set in $E$.
Consider a Lipschitz function $\pi:S\to \mathbb{R}^k$ where $k\leq n$. At $\mathcal{H}^n$-almost every $p\in S$, there is a tangent $n$-plane $Q$
along which the tangential differential $d^{S}\pi_p$ exists. At such $p$, there is a nonnegative number called coarea factor and denoted by $\mathbf{C}_k(d^{S}\pi_p)$ such that the following \emph{coarea formula} \cite[Theorem 9.4]{AK00b} holds for any  Borel function $\theta:S\to [0,\infty]$:
\begin{equation} \label{coarea form}
\int_S\theta(p) \mathbf{C}_k(d^S\pi_p)d\mathcal{H}^n(p) = \int_{\mathbb{R}^k}\big( \int_{\pi^{-1}(z)} \theta(p) d\mathcal{H}^{n-k}(p)  \big)d\mathcal{H}^k(z).
\end{equation}
Besides, for $\mathcal{H}^k$-almost every $z\in \mathbb{R}^k$, the set $\pi^{-1}(z)\cap S$ is countably $\mathcal{H}^{n-k}$-rectifiable.
The precise definition of the coarea factor $\mathbf{C}_k(d\pi_z)$ is a bit more involved than the Jacobian \cite[Definition 9.1]{AK00b}, nevertheless it is similar to what appears in the smooth finite dimensional case  and when $\pi$ is $\lambda$-Lipschitz then 
$$\mathbf{C}_k(d\pi_z)\leq \lambda^k$$ as expected.

\subsection{The slicing theorem for integral currents} \label{subsection:slicing}

We move on to the slicing theorem for integral currents, which enables to construct lower dimensional integral currents out of a given integral current and a Lipschitz map. Again, we still assume for simplicity that $E$ is an infinite-dimensional Riemannian manifold (locally modelled on a Hilbert space).
Let $S$ be a countably $\mathcal{H}^n$-rectifiable set in $E$.
Given $fd\pi_1\wedge ...\wedge d\pi_n\in \mathcal{D}^n(E)$, then at $\mathcal{H}^n$-almost every $p\in S$, an approximate tangent $n$-plane $Q$ exists, the tangential differentials $d^S (\pi_j)_p$ along $Q$ exist and the $n$-covector $fd^S\pi_1\wedge ...\wedge d^S\pi_n$ is well-defined.
There is a notion of orientations on $S$ \cite[Section 9]{AK00}. Roughly speaking, an orientation $\tau$ on $S$ endows each approximate tangent $n$-plane $Q$ of $S$ with an $n$-vector $\mathbf{e}_1\wedge...\wedge \mathbf{e}_n$ where $(\mathbf{e}_1,...,\mathbf{e}_n)$ is an orthonormal basis of $Q$ (recall that here $E$ is a manifold).

Let $T$ be an $n$-dimensional integral current in $E$, with multiplicity function $\theta_T$. 
There is an intrinsic description of $T$, based on the notion of orientation \cite[Theorem 9.1]{AK00}. In fact there is an orientation $\tau_T$ on $\set(T)$ such that for all $fd\pi_1\wedge ...\wedge d\pi_n\in \mathcal{D}^n(E)$, 
$$T(fd\pi_1\wedge ...\wedge d\pi_n) = \int_{\set(T)} f(p) \theta_T(p) \langle d^{\set(T)}\pi_1\wedge ...\wedge d^{\set(T)}\pi_n, \tau_T \rangle  d\mathcal{H}^n(p).$$
Conversely if $S$ is a countably $\mathcal{H}^n$-rectifiable set in $E$, if we have $\theta:E\to \mathbb{N}$ such that  $\int_S\theta d\mathcal{H}^n<\infty$, and an orientation $\tau$ on $S$ then the current 
$$\llbracket S, \theta, \tau \rrbracket$$ defined by the analogue of the formula above is an $n$-dimensional integral current.

Now consider a Lipschitz map $\pi : \spt(T) \to \mathbb{R}^k$ where $k\leq n$. Then for each $x\in \mathbb{R}^k$, there is an integral current $\langle T,\pi,x\rangle$ called \emph{sliced current} \cite[Theorems 5.6 and 5.7]{AK00}, characterized by the fact that for all $fd\pi_1\wedge ...\wedge d\pi_{n-k} \in \mathcal{D}^{n-k}(E)$ and $\psi \in C_c(\mathbb{R}^k)$,
\begin{equation} \label{slicing thm}
\big[\int_{\mathbb{R}^k} \langle T,\pi,x\rangle \psi(x) dx \big](fd\pi_1\wedge ...\wedge d\pi_{n-k}) = T(f.(\psi \circ \pi) d\pi \wedge d\pi_1...\wedge d\pi_{n-k})
\end{equation}
where if $\pi =(u_1,...,u_k)$ and $u_j$ are the coordinate functions, then $d\pi := du_1\wedge...\wedge du_k$. Besides, with the notation $T = \llbracket \set(T), \theta_T, \tau_T\rrbracket $ defined above,  for almost every $x\in \mathbb{R}^k$ we have an orientation $\tau_x$ of $\set(T) \cap \pi^{-1}(x)$ such that
\begin{equation} \label{slicing thm bis}
\langle T,\pi,x\rangle = \llbracket \set(T)\cap \pi^{-1}(x), \theta_T, \tau_x \rrbracket,
\end{equation} 
see \cite[Theorem 9.7]{AK00}.
The fact that
$$\spt(\partial \langle T,\pi,x\rangle ) \subset \spt(\partial T),$$ follows from \cite[Theorem 5.6 (i)]{AK00} applied to the boundary $\partial T$, or alternatively by iterating \cite[Theorem 5.6 (iii), Lemma 5.3]{AK00}.

\subsection{Integral current spaces and intrinsic flat topology}

We end this section with the definitions of integral current spaces, the intrinsic flat topology of Sormani-Wenger, and Wenger's compactness theorem \cite{SW11}.

\begin{definition}
An \emph{integral current space} of dimension $n$ is a triple 
$$C:=(X,d,T)$$ where $(X,d)$ is a metric space with completion called $\overline{X}$, and $T$ is an $n$-dimensional integral current  in $\overline{X}$, such that $\set(T)=X$. The mass $\mathbf{M}(C)$ is by definition $\mathbf{M}(T)$. 
\end{definition}

A simple example of integral current space is given by a complete, oriented Riemannian $n$-manifold $(M,g)$ with compact boundary and finite volume: the metric space is $M$ endowed with the geodesic distance $\dist_g$ induced by $g$, and the integral current structure $\llbracket 1_M \rrbracket$  is the natural integral current induced by integration on $M$. Recall that we allow the metric to assume $\infty$ as value, and so we allow $M$ to be non-connected.
In general, an integral current in a metric space $E$ determines uniquely an integral current space. The boundary $\partial C$ is the integral current space induced by $\partial T$. 

If $C:=(X,d,T)$, $C':=(X',d',T')$ are two integral current spaces such that there is an isometry $\varphi:\overline{X} \to \overline{X'}$ with $\varphi_\sharp T = T'$ (such a $\varphi$ is called a current preserving isometry \cite[Definition 3.26]{SW11}), then we say that $C$ and $C'$ are \emph{isomorphic}.

In the same way that Gromov-Hausdorff convergence generalizes Hausdorff convergence by allowing arbitrary isometric embeddings in a complete metric space, the notion of \emph{intrinsic flat convergence} generalizes flat convergence as follows \cite[Theorem 4.2]{SW11}.
\begin{definition}
 A sequence of integral current spaces $\{(X_m,d_m,S_m)\}_{m\geq 0}$ converges in the \emph{intrinsic flat topology} to an integral current space $(X_\infty,d_\infty,S_\infty)$ when there is a complete metric  space $\mathbf{Z}$ and there are isometric embeddings 
$$j_m : \overline{X}_m \to \mathbf{Z}, \quad j_\infty:\overline{X}_\infty \to \mathbf{Z}$$ such that $(j_m)_\sharp(S_m)$ converges in the flat topology to $(j_\infty)_\sharp(S_\infty)$ inside $\mathbf{Z}$.
\end{definition}

The complete metric space $\mathbf{Z}$ can be taken to be a Banach space if the metrics $d_m$, $d_\infty$ only assume finite values, by the standard Kuratowski embedding. Actually the above notion of convergence is induced by a metric called the \emph{intrinsic flat distance} $\mathbf{d}_\mathcal{F}$ and defined in \cite[Definition 1.1]{SW11}: given two integral current spaces of dimension $n$, $C:=(X,d,S)$ and $C':=(X',d',S')$, set
$$\mathbf{d}_\mathcal{F}(C,C') := \inf\{\mathbf{M}(U)+\mathbf{M}(V)\}$$
where the infimum is taken over all complete metric spaces $(Z,d)$ and all integral currents $U$, $V$ in $Z$ such that there are isometric embeddings
$\varphi :(\overline{X},d) \to Z$, $\varphi' :(\overline{X'},d') \to Z$ with
$$\varphi_\sharp(S) - (\varphi')_\sharp(S') = U+\partial V.$$
By \cite[Theorem 3.27]{SW11}, $\mathbf{d}_\mathcal{F}(C,C') =0$ if and only if $C$ and $C'$ are isomorphic.
 
One of the fundamental properties of integral current spaces is the following compactness theorem \cite{Wenger11}\cite[Theorem 4.19]{SW11}: 
\begin{theo} [\cite{Wenger11,SW11}]
If for some constant $c$, $\{(X_m,d_m,S_m)\}_{m\geq 0}$ is a sequence of $n$-dimensional integral current spaces with 
$$\mathbf{M}(S_m) +\mathbf{M}(\partial S_m) \leq c <\infty,$$
$$\diam(\spt(S_m)) \leq c,$$
then there is a subsequence $\{(X_{m_k},d_{m_k},S_{m_k})\}_{k\geq 0}$ converging in the intrinsic flat topology to an $n$-dimensional integral current space $(X_\infty,d_\infty,S_\infty)$. 
\end{theo}
The above compactness result will be necessary when defining spherical Plateau solutions in Subsection \ref{subsection:setup}.

\subsection{Approximation of integral currents by polyhedral chains} \label{poly chain approx}

We end this section with a useful approximation theorem for integral currents in spherical manifolds by polyhedral chains. This result can be simply deduced from the analogous result in finite dimensions, which in turn is completely standard.

Let $(N,g_N)$ be an infinite-dimensional Riemannian manifold which is locally isometric to the unit sphere of an infinite-dimensional Hilbert space. 
In our context, a $k$-dimensional integral current $P$ in $(N,g_N)$ is called a \emph{polyhedral chain} of dimension $k$ if there are smoothly embedded totally geodesic $k$-simplices $S_1,...,S_m\subset  N$ endowed with an orientation, and integers $a_j$ so that
$$P= \sum_{j=1}^m  a_j \llbracket 1_{S_j}\rrbracket.$$
\begin{lemme} \label{approx polyh}
For any $\epsilon>0$, and any integral current $C$ with compact support in $(N,g_N)$, there is a polyhedral chain $P$ such that 
\begin{itemize}
\item $C$ and $P$ are $\epsilon$-close in the flat topology, 
\item $\spt(C)$ and $\spt(P)$ are $\epsilon$-close in the Hausdorff topology,
\item $|\mathbf{M}(C)- \mathbf{M}(P) |\leq \epsilon$. 
\end{itemize}
\end{lemme}

Here is an outline of proof. Consider an open ball $B$ centered at the origin in the Hilbert space $\ell^2(\mathbb{N})$ where $\mathbb{N}$ is the set of natural numbers, and consider an integral current $C$ with compact support in $B$. For any integer $L\geq 1$, we can use the orthogonal projection onto $\ell^2(\{1,...,L\})$ to map $C$ to an integral current $C_1$ compactly supported  inside the finite dimensional space $B\cap \ell^2(\{1,...,L\})$. If $L$ is chosen large enough, $C_1$ will be as close to $C$ as we wish in the sense of Lemma \ref{approx polyh}. Now $C_1$ is also an integral current in the sense of Federer-Fleming (see the appendix of \cite{AK00}), so the usual approximation  results (\cite[Sections 4.1 and 4.2]{Federer69}, \cite{DePauw14})
in finite dimension can be applied and give the desired polyhedral chain approximation $P_1$ in  $B\cap \ell^2(\{1,...,L\})$. This construction clearly generalizes to small balls in $(N,g_N)$ if $N$ is separable, which can always be assumed to be true since $\spt(C)$ is compact. In the general case where the support of $C$ is not contained in a small ball of $N$, one can argue using a partition of unity and construct via an interpolation map a current $C_2$ arbitrarily close to $C$ in the sense of Lemma \ref{approx polyh}, which is locally finite dimensional in the following sense: for any $x\in \spt(C_2)$, there is a ball $B_x$ containing $x$ such that $\spt(C_2)\cap B_x$ is contained in a totally geodesic finite dimensional plane of $B_x$. The rest is standard as before.

\section{Preliminaries on the barycenter map} \label{appendix b}

In \cite{BCG95,BCG96}, Besson-Courtois-Gallot proved that the normalized volume entropy on a closed hyperbolic manifold of dimension at least $3$ is uniquely achieved at the hyperbolic metric. The proof of this striking result relies on the barycenter map. 
In this section\footnote{I would like to thank Cosmin Manea for corrections and useful discussions about this section.}, we define a variant of the barycenter map used in \cite{BCG95,BCG96} (see also \cite{Sambusetti99}), which we will need in the proofs of the uniqueness of spherical Plateau solutions.
Instead of working with $L^2$ functions on a boundary at infinity as in \cite{BCG95},
we directly work with $\ell^2$ functions on the underlying group $\Gamma$. This setup is better adapted to extensions to more general situations ($3$-manifolds, Plateau Dehn fillings). This section only treats hyperbolic manifolds, even though everything carries out more generally  in the locally symmetric rank one case.
All the results here are essentially contained  in \cite{BCG95}.

\subsection{Definition of the barycenter map} 
 
Let $(M,g_0)$ be a closed oriented hyperbolic manifold. Let $(\tilde{M},g_0)$ be its universal cover, namely the hyperbolic $n$-space. 
Let $\Gamma:=\pi_1(M)$. The latter acts properly cocompactly, freely and properly on $(\tilde{M}, g_0)$.
Let $ {S^\infty}$ be the unit sphere in the Hilbert space $\ell^2(\Gamma)$, on which $\Gamma$ acts freely and properly by isometries via the regular representation $\lambda_\Gamma: \Gamma\to \End(\ell^2(\Gamma))$, and let $ {S^\infty}/\lambda_\Gamma(\Gamma)$ be the quotient manifold endowed with the standard round metric (see Subsection \ref{subsection:setup} for more details).

It is well-known that the distance functions on $(\tilde{M},g_0)$ satisfy the following Hessian lower bound: for any fixed $w\in \tilde{M}$,  at any point different from $w$ we have
$$Dd \dist_{g_0}(w,.)
 \geq \Id - d\dist_{g_0}(w,.)\otimes d\dist_{g_0}(w,.).$$
Distance functions are not smooth so in the setting of hyperbolic manifolds, it is more convenient from a technical point of view to work with modified distance functions called $\rho_w$. Fix a smooth strictly convex  increasing function 
$$\varkappa:[0,\infty)\to [0,\infty)$$ such that $\lim_{t\to \infty} t^{-1}\varkappa(t) =1$, $\varkappa'(t)< 1$ for all $t$, and for any $w\in \tilde{M}$, the composition
$$\rho_{w}(.):= \varkappa(\dist_{g_0}(w,.))$$
is smooth everywhere and satisfies 
\begin{equation} \label{hessian lower bd''}
Dd\rho_{w} \geq \Id - d\rho_{w}\otimes d\rho_{w}.
\end{equation}
Such a function exists, for example if we set 
$$\varkappa(t)= \frac{1}{c}\log(\cosh(ct))$$ 
where $c$ is a positive constant, it is an exercise to check that (\ref{hessian lower bd''}) holds whenever $c$ is large enough.

\begin{definition} \label{s+}
Fix a basepoint $o\in \tilde{M}$. Let $\mathbb{S}^+$ be the set of functions in $ {S^\infty}$ with finite support.
For $f\in \mathbb{S}^+$, consider the functional
\begin{equation} \label{bf}
\begin{split}
\mathcal{B}_f & : \tilde{M}\to [0,\infty]\\
\mathcal{B}_f(x) & := \sum_{\gamma\in \Gamma} |f(\gamma)|^2 \rho_{\gamma.o}(x).
\end{split}
\end{equation}
The \emph{barycenter map} is then defined as
$$\mathrm{Bar} : \mathbb{S}^+\to \tilde{M}$$
$$\mathrm{Bar}(f) := \text{ the unique point minimizing $\mathcal{B}_f$}.$$
\end{definition} 
The barycenter map is well-defined: the modified distance functions $\rho_{\gamma.o}$ are strictly convex,  
moreover $\mathcal{B}_f$ tends to infinity uniformly as $x\to \infty$, so that the point where $\mathcal{B}_f$ attains its minimum exists and is unique. 
The subset $\mathbb{S}^+\subset  {S^\infty}$ is invariant by $\Gamma$, and $\mathrm{Bar}$ is $\Gamma$-equivariant. 
The quotient map 
$$\mathbb{S}^+/\lambda_\Gamma(\Gamma) \to M$$ is also denoted by $\mathrm{Bar}$.

\subsection{Notations and a priori Lipschitz bounds}

The regularity of the barycenter map 
$$\mathrm{Bar}:\mathbb{S}^+/\lambda_\Gamma(\Gamma) \to M$$
is not completely immediate. To avoid discussing such issues, we will only consider the barycenter map restricted to the support of polyhedral chains in $\mathbb{S}^+/\lambda_\Gamma(\Gamma)$. Recall that by definition (see Subsection \ref{poly chain approx}), a $k$-dimensional polyhedral chain $P$ in $\mathbb{S}^+/\lambda_\Gamma(\Gamma)$ is a $k$-dimensional integral current that can be written as
$$P= \sum_{j=1}^m  a_j \llbracket 1_{S_j}\rrbracket$$
where $a_j$ are integers and 
$$S_1,...,S_m\subset\mathbb{S}^+/\lambda_\Gamma(\Gamma)\subset  {S^\infty}/\lambda_\Gamma(\Gamma)$$ are finitely many smoothly embedded totally geodesic $k$-simplices endowed with an orientation. In particular the support of a polyhedral chain is by definition a finite union of totally geodesic simplices. Given a polyhedral chain $P$ as above, each simplex $S_j$ lifts to a simplex $\tilde{S}_j$ in $\mathbb{S}^+ \subset \ell^2(\Gamma)$. 
We claim that there is a finite set $\mathbf{S}_j \subset \Gamma$ depending only on $\tilde{S}_j$ such that any element of  $\tilde{S}_j$ is a function with support in $\mathbf{S}_j$.
Indeed, the $k$-simplex $\tilde{S}_j$ is the convex hull of its extremal points $f_0,...,f_k \in \mathbb{S}^+$. If $\mathbf{S}_j$ denotes the union of the finite supports of $f_0,...,f_k$, then any linear combination of those functions has support contained in $\mathbf{S}_j$, and that proves the claim.
Now given a polyhedral chain $P$ in $\mathbb{S}^+/\lambda_\Gamma(\Gamma)$, one can check without difficulty that the restriction
$$\mathrm{Bar}: \spt(P) \to M$$
is continuous (and smooth on each simplex by the discussion below).

Consider $f\in \mathbb{S}^+$. For $v\in T_x\tilde{M}$, set 
  \begin{equation}\label{def de h}
  H_f(v,v) := \sum_\gamma  f^2(\gamma)  |d_x\rho_{\gamma.o}(v)|^2.
  \end{equation}
 The endomorphism $H_f$ is symmetric, it satisfies
 $$\tr H_f < 1$$
due to the fact that $|\nabla \rho_{\gamma.o}|<1$, and has eigenvalues 
$$0\leq \mu_1(f)\leq ...\leq \mu_n(f) < 1.$$
In particular $\Id-H_f$ has strictly positive determinant.
For $v\in T_x\tilde{M}$, set 
 $$K_f(v,v) := \sum_\gamma  f^2(\gamma) Dd_x\rho_{\gamma.o}(v,v) .$$
By strict convexity of the modified distance functions,
\begin{equation} \label{k>0}
 \forall  v\in T_x\tilde{M} \setminus \{0\},\quad K_f(v,v) >0.
\end{equation}
The barycenter $x$ of $f$ is characterized by the equation
\begin{equation}\label{implicit f}
\sum_{\gamma\in \Gamma} f^2(\gamma)  d_x\rho_{\gamma.o}(.)=0\in T_x^*\tilde{M}.
\end{equation}

For $1\leq k\leq n$, let $\xi$ be a totally geodesic $k$-simplex contained in  $\mathbb{S}^+$, embedded in $ {S^\infty}$, passing through $f\in \mathbb{S}^+$. Then the restriction of 
$\mathrm{Bar}$ to $\xi$ is smoothly differentiable around $f$ 
because of the implicit function theorem  and (\ref{k>0}), (\ref{implicit f}). 
Let $Q$ be the tangent $n$-plane of $\xi$ at $f$.  
The differential of $\mathrm{Bar}$ along $Q$  is denoted by 
$d\mathrm{Bar}\big|_Q: Q\to T_{\mathrm{Bar}(p)} \tilde{M}$. 
Sometimes we will drop the subscript $|_Q$ when the choice of tangent $k$-plane is clear.
By differentiating (\ref{implicit f}) with respect to $f$ we obtain for all  $\dot{f}\in Q$:
$$\sum_{\gamma\in \Gamma} 2f(\gamma)\dot{f}(\gamma) d_x\rho_{\gamma.o}(.) + \sum_{\gamma\in \Gamma} f^2(\gamma) Dd_x\rho_{\gamma.o}(d \mathrm{Bar}\big|_Q(\dot{f}),.)=0.$$
After applying Cauchy-Schwarz,
we get the following: for all $v\in T_x\tilde{M}$ and $\dot{f}\in Q$, with $\|\dot{f}\|_{\ell^2}=1$,
  \begin{equation} \label{ja}
 K_f(d \mathrm{Bar}\big|_Q(\dot{f}), v) \leq 2[H_f(v,v)]^{1/2}
 \end{equation}
 and
 \begin{equation} \label{jacoco}
  |\Jac \mathrm{Bar}\big|_Q| \leq 2^n\frac{(\det H_f)^{1/2}}{\det K_f} \leq 2^n\frac{(\det H_f)^{1/2}}{\det(\Id - H_f)}.
  \end{equation}
 Going from (\ref{ja}) to the first inequality in (\ref{jacoco}) is an application of the Gram-Schmidt orthonormalization process for matrices, see the proof of \cite[Lemma 5.4]{BCG96}. The second inequality in (\ref{jacoco}) is a consequence of the inequality
 \begin{equation} \label{kfgeq}
 K_f  \geq  \Id-H_f 
\end{equation}
which in turn follows from (\ref{hessian lower bd''}).

For any $f\in \mathbb{S}^+$, let $\mu_1(f)\leq ...\leq \mu_n(f)$ be the eigenvalues of the endomorphism $H_f$ defined in (\ref{def de h}). 
The following is an a priori Lipschitz bound corresponding to \cite[Lemma 7.5.a]{BCG95}.
\begin{lemme} \label{lipschitz barycenter map}

Let $\kappa>0$ and let $\alpha \subset \mathbb{S}^+$ be a connected continuous piecewise geodesic curve.
Suppose that for all $f\in \alpha$, $\mu_n(f)\leq 1-\kappa/2$. Then 
$$\length_{g_0}(\mathrm{Bar}(\alpha)) \leq K_1\length(\alpha)$$
for a constant $K_1$ depending only on $\kappa$.

\end{lemme}
\begin{proof}

Given $f\in \alpha$, let $V$ be the tangent $1$-plane of $\alpha$ at $f$, let $\dot f \in V$ with $\|\dot f\|_{\ell^2}=1$, and suppose that $d_f\mathrm{Bar}(\dot f) \neq 0$. Set $v:= \frac{d_f\mathrm{Bar}(\dot f)}{|d_f\mathrm{Bar}(\dot f)|}\in T_x\tilde{M}$.

By (\ref{ja}) and (\ref{kfgeq}), 
\begin{equation}\label{13:31}
|d_f\mathrm{Bar}(\dot{f})| \leq 2 \frac{(H_f(v,v))^{1/2}}{1-H_f(v,v)}.
\end{equation}
The lemma readily follows from integrating (\ref{13:31}) along $\alpha$.


\end{proof}

Here is another a priori bound corresponding to \cite[Lemma 7.5.b]{BCG95}.
\begin{lemme} \label{fact 2}
Let $f,f'\in \mathbb{S}^+$ and let $\beta$ be the geodesic segment joining $\mathrm{Bar}(f)$ and $\mathrm{Bar}(f')$. 
Let $P$ be the parallel transport from $\mathrm{Bar}(f)$ to $\mathrm{Bar}(f')$ along  $\beta$.
Then 
$$\|H_{f'}\circ P - H_f\|\leq K_2(\length_{g_0}(\beta) +\|f-f'\|_{\ell^2})$$
for a constant $K_2$.
\end{lemme}
\begin{proof}
Let $Y_0$ be a unit norm tangent vector of $M$ based at $\mathrm{Bar}(f)$ and let $Y_2$ be its parallel transport at $\mathrm{Bar}(f')$ along $\beta$. Then, writing $x:=\mathrm{Bar}(f), x':=\mathrm{Bar}(f')$,
\begin{equation} \label{hf'}
\begin{split}
& |H_{f'}(Y_2,Y_2) - H_f(Y_0,Y_0) |  \\
= & |\sum_\gamma (f')^2(\gamma) |d_{x'}\rho_{\gamma.o}(Y_2)|^2 - \sum_\gamma  f^2(\gamma)  |d_x\rho_{\gamma.o}(Y_0)|^2  |\\
 \leq &  |\sum_\gamma f^2(\gamma)   \big( |d_{x'}\rho_{\gamma.o}(Y_2)|^2 - |d_x\rho_{\gamma.o}(Y_0)|^2\big) | \\ &  + |\sum_\gamma  \big((f')^2(\gamma) -f^2(\gamma)\big)  |d_{x'}\rho_{\gamma.o}(Y_2)|^2 |.
\end{split}
\end{equation}
The Hessian of the smooth modified distance functions $\rho_{\gamma.o}$ is uniformly bounded from above. This bound controls uniformly the terms $\big( |d_{x'}\rho_{w}(Y_2)|^2 - |d_x\rho_{w}(Y_0)|^2\big)$ by integration along $\beta$, and we conclude the proof with Cauchy-Schwarz.

\end{proof}

\subsection{The Jacobian bound}

We are now ready to state the main estimates for the Jacobian of the barycenter map, which is sharp and is the key technical point in \cite{BCG95,BCG96}.

 \begin{lemme} \cite{BCG95} \label{astuce}
 Suppose that $n\geq 3$. Let $f\in \mathbb{S}^+$
 and let $Q$ be the tangent $n$-plane at $f$ of a totally geodesic $n$-simplex in $ \mathbb{S}^+$  passing through $f$. Then 
 \begin{equation} \label{jacbar<}
 |\Jac \mathrm{Bar}\big|_Q| \leq \big(\frac{{4n}}{(n-1)^2}\big)^{n/2}.
 \end{equation}
 
Moreover for any $\eta>0$ small enough,
there exists $c_{\eta}>0$ with $\lim_{\eta \to 0} c_{\eta} = 0$, such that the following holds. 
If 
 $$  |\Jac \mathrm{Bar}\big|_Q| \geq \big(\frac{{4n}}{(n-1)^2}\big)^{n/2} -\eta,$$
then for any norm $1$ tangent vector $\Vec{u}\in Q$,
 \begin{equation} \label{dbar>}
 |d \mathrm{Bar}\big|_Q(\Vec{u})| \geq  \big(\frac{4n}{(n-1)^2}\big)^{1/2} - c_{\eta}
 \end{equation}
 and
 for any
connected continuous piecewise geodesic curve $\alpha \subset \mathbb{S}^+$ of length less than $\eta$ starting at $f$, 
 we have 
  \begin{equation} \label{length<}
 \length_{g_0}(\mathrm{Bar}(\alpha)) \leq   (\big(\frac{4n}{(n-1)^2}\big)^{1/2}+  c_{\eta}) \length(\alpha).
  \end{equation}
 \end{lemme}
 \begin{proof}
The upper bound for the Jacobian is proved by combining (\ref{jacoco}) and \cite[Proposition B.1]{BCG95}. 
 
Let us move on to the second part of the lemma.  
As before, denote by $0\leq \mu_1(f)\leq ...\leq \mu_n(f)< 1$ the eigenvalues of $H_f$.  
Let $\theta>1$ be such that $\tr (\theta H_f)=1$.
By the key bound in \cite[Proposition B.5]{BCG95}, there is a universal constant $A>0$ such that
$$2^n \frac{(\det H_f)^{1/2}}{\det \big(\Id-H_f \big) }\leq \big(\frac{4n}{(n-1)^2}\big)^{n/2} \big(1 - A\sum_{j=1}^n (\theta \mu_j(f) -\frac{1}{n})^2\big).$$
Hence for any $\eta'>0$ if $  |\Jac \mathrm{Bar}\big|_Q| \geq \big(\frac{{4n}}{(n-1)^2}\big)^{n/2} -\eta$ for a small $\eta$, then 
\begin{equation} \label{fait hf0}
\mu_n(f)\leq  \frac{1}{n}+ \eta'.
\end{equation}
and so by (\ref{ja}) we have 
for all norm $1$ tangent vector $\Vec{u}\in Q$:
\begin{equation} \label{c''eta}
|d \mathrm{Bar}\big|_Q(\Vec{u})|  \leq \big(\frac{4n}{(n-1)^2}\big)^{1/2} +  c''_{\eta}
\end{equation}
where $\lim_{\eta\to 0}c''_{\eta}=0$. Therefore if  $  |\Jac \mathrm{Bar}\big|_Q| \geq \big(\frac{{4n}}{(n-1)^2}\big)^{n/2} -\eta$, then (\ref{c''eta}) forces the following to hold: for all norm $1$ tangent vector $\Vec{u}\in Q$,
 $$|d \mathrm{Bar}\big|_Q(\Vec{u})| \geq  \big(\frac{4n}{(n-1)^2}\big)^{n/2} - c_{\eta}$$
where $\lim_{\eta\to 0}c_{\eta}=0$.

We want to propagate the estimate (\ref{c''eta}) on $\|d\mathrm{Bar}\|$ to a whole ``neighborhood'' of $f$ inside $\mathbb{S}^+$.
Suppose that a continuous piecewise geodesic curve $\alpha \subset \mathbb{S}^+$ joins $f$ to $f'$ and has length less than a number $\eta$. 
Let us check, as an intermediate step, that for $\kappa>0$,
\begin{equation} \label{fait hf}
\text{if  $\mu_n(f)\leq 1- \kappa$, then $\mu_n(f')\leq 1- \kappa + c'_{\eta}$} 
\end{equation}
where the constant $c'_{\eta}$ depends only on $\eta$ and satisfies $\lim_{\eta \to 0} c'_{\eta} = 0$.  Indeed, we can proceed as in \cite[Lemma 7.5]{BCG95}. Let $K_1,K_2$ as in Lemmas \ref{lipschitz barycenter map} and \ref{fact 2}. 
Suppose that $\mu_n(f)\leq 1- \kappa$, and that $\eta$ is small so that there is $K_3$ with $K_2(K_1+1)+1 <  K_3 < \frac{\kappa}{2\eta}$. To argue towards a contradiction, assume that (\ref{fait hf}) is not true and that there is a point $f_1\in \mathbb{S}^+$ joined to $f$ by a continuous piecewise geodesic curve $\alpha \subset \mathbb{S}^+$ of length less than $\eta$, such that 
$$\mu_n(f_1) \geq 1- \kappa + K_3 \eta.$$
By truncating the curve, we can also assume that  $f_1$ is the only point on $\alpha$ which satisfies the above condition, so that $\mu_n(f_0) \leq 1-\kappa/2$ for every $f_0\in \sigma$. We have $\mu_n(f_1) \geq 1- \kappa + K_3 \eta$, but by Lemmas \ref{lipschitz barycenter map} and \ref{fact 2}, we would also have 
$$\mu_n(f_1) \leq \mu_n(f)  + K_2(K_1+1)\eta   \leq 1-\kappa + K_2(K_1+1)\eta < 1-\kappa +  K_3\eta.$$
This is a contradiction, thus (\ref{fait hf}) is checked.


We now conclude using (\ref{fait hf}).
Suppose that a continuous piecewise geodesic curve $\alpha \subset \mathbb{S}^+$ joins $f$ to $f'\in \mathbb{S}^+$ and has length less than a small number $\eta$. Let $\Vec{u}$ be a unit tangent vector of $\alpha$ at a point of $\alpha$. 
By (\ref{ja}), (\ref{kfgeq}), (\ref{fait hf0}) and (\ref{fait hf}),
$$
| d \mathrm{Bar}(\Vec{u})| \leq (\frac{4n}{(n-1)^2}\big)^{1/2} +  c_{\eta}$$
where $c_{\eta}$ is a constant depending only on $\eta>0$ such that $\lim_{\eta\to 0} c_{\eta}=0$. We choose $\eta$ small enough and we integrate the previous inequality from $f$ to $f'$
along the points of $\alpha$ where the tangent vector exists and the differential of $\mathrm{Bar}$ is well-defined.
We readily obtain the desired conclusion:
 $$\length_{g_0}(\mathrm{Bar}(\alpha)) \leq   (\big(\frac{4n}{(n-1)^2}\big)^{1/2}+  c_{\eta}) \length(\alpha).$$

 \end{proof}

 \section{The spherical Plateau problem} \label{definition spp}

\subsection{Definitions for the spherical volume and spherical Plateau solutions} \label{definition of spherical plateau problem} \label{subsection:setup}

Let $\Gamma$ be a countable group.
Let $\ell^2(\Gamma)$ be the space of $\ell^2$ real functions on $\Gamma$. Set
$$ {S^\infty}:= \{f:\Gamma\to \mathbb{R};\quad \| f \|_{\ell^2} =1\}.$$
Note that $S^\infty$ may be finite dimensional, and that $S^\infty$ implicitly depends on $\Gamma$.
The $\ell^2$-norm induces a Riemannian metric $\mathbf{g}_{\mathrm{Hil}}$ on the unit sphere $ {S^\infty}$. The group $\Gamma$ acts linearly isometrically on $\ell^2(\Gamma)$, and thus on $ {S^\infty}$, by the (left) regular representation
$$\lambda_\Gamma:\Gamma \to \End(\ell^2(\Gamma))$$
defined as follows: for all $\gamma\in \Gamma, x\in \Gamma, f\in  {S^\infty}$,
$$(\lambda_\Gamma(\gamma).f)(x) := f(\gamma^{-1}x).$$
Denote by 
$$( {S^\infty}/\lambda_\Gamma(\Gamma),\mathbf{g}_{\mathrm{Hil}})$$
the corresponding spherical quotient with the quotient metric. The action of $\Gamma$ on $ {S^\infty}$ is not free exactly when $\Gamma$ has torsion elements. When $\Gamma$ is torsion-free and non-trivial, then it is not hard to check that $  {S^\infty}$ is an infinite-dimensional contractible sphere, $\Gamma$ acts properly freely on $ {S^\infty}$ by the regular representation and the quotient space $ {S^\infty}/\lambda_\Gamma(\Gamma)$ is in fact a classifying space for $\Gamma$ (namely a $K(\Gamma,1)$ space).

Let $n\geq 0$ be an integer. Consider the following homology groups defined using integral currents:
\begin{align*}
\mathcal{Z}_n( {S^\infty}/\lambda_\Gamma(\Gamma))  := \{T; \quad \text{$T$ is an integral $n$-current with} \\
 \text{compact support in $ {S^\infty}/\lambda_\Gamma(\Gamma)$}\}, 
 \end{align*}
\begin{align*}
\mathcal{B}_n( {S^\infty}/\lambda_\Gamma(\Gamma))  := \{\partial D; \quad \text{$D$ is an integral $(n+1)$-current with}\\
  \text{compact support in $ {S^\infty}/\lambda_\Gamma(\Gamma)$}\},
 \end{align*}
 $$
\mathbf{H}_n( {S^\infty}/\lambda_\Gamma(\Gamma))  := \mathcal{Z}_n( {S^\infty}/\lambda_\Gamma(\Gamma))/\mathcal{B}_n( {S^\infty}/\lambda_\Gamma(\Gamma)).
$$
There is a natural morphism 
$$
\hat{\pi}: H_*(\Gamma;\mathbb{Z}) \to \mathbf{H}_*( {S^\infty}/\lambda_\Gamma(\Gamma))
$$
where $H_*(\Gamma;\mathbb{Z})$ are the singular homology groups of the group $\Gamma$ with coefficients in $\mathbb{Z}$.
Given a group homology class $h\in H_n(\Gamma;\mathbb{Z})$, consider the space 
$$\mathscr{C}(h)$$ of boundaryless $n$-dimensional integral currents with compact supports inside $ {S^\infty}/\lambda_\Gamma(\Gamma)$ which represent the homology class $\hat{\pi}(h)\in \mathbf{H}_n( {S^\infty}/\lambda_\Gamma(\Gamma))$; a more careful definition of $\hat{\pi}$ and $\mathscr{C}(h)$ is given in Subsection \ref{def of C}. For simplicity, we will sometimes call these currents ``cycles representing $h$''. Recall that the notion of mass $\mathbf{M}$ for an integral current is reviewed in Section \ref{appendix a}.
We define the spherical volume of a group homology class as follows:
\begin{definition} [Spherical volume] \label{seconde def}
Let $h\in H_n(\Gamma;\mathbb{Z})$. The \emph{spherical volume} of $h$ is defined as
$$\spherevol(h) = \inf\{\mathbf{M}(C); \quad C\in \mathscr{C}(h)\}.$$
\end{definition} 
This is a homological generalization of the spherical volume first introduced by Besson-Courtois-Gallot in the Riemannian setting \cite[Section 3,I]{BCG91} (see also Subsection \ref{original def}).  The reader can learn in the survey of Kotschick \cite[Section 2]{Kot11}  that this invariant is closely related to a plethora of other fundamental invariants like the simplicial volume, minimal volume, minimal volume entropy...
Besides, this invariant can be computed in many special cases, as will be  explained in Sections \ref{section:uniqueness} and \ref{section:uniqueness2}.

We can now define  spherical Plateau solutions:
\begin{definition}[Spherical Plateau solution]
We call \emph{spherical Plateau solution} for $h$ any $n$-dimensional integral current space $C_\infty$ which is the limit in the intrinsic flat topology of a sequence $\{C_i\}\subset \mathscr{C}(h)$ such that
$$\lim_{i \to \infty} \mathbf{M}(C_i) = \spherevol(h).$$
\end{definition}
The \emph{spherical Plateau problem} consists of studying spherical Plateau solutions and their relation to the original pair $(\Gamma,h)$. 

Note that spherical Plateau solutions $C_\infty$ for a group homology class $h$ always exist by the compactness theorem of Wenger \cite{Wenger11}\cite[Theorem 4.19]{SW11} which serves as a replacement for the compactness theorem of Federer-Fleming in finite dimensions (it suffices to check that given $C_i$ as in the above definition, their masses as well as their diameters are uniformly bounded). Any spherical Plateau solution $C_\infty$ is an integral current space without boundary, with uniformly bounded diameter. The mass of $C_\infty$ satisfies
$$ \mathbf{M}(C_\infty) \leq \spherevol(h)$$
 by lower semicontinuity of the mass under instrinsic flat convergence \cite[Theorem 4.6]{SW11}. 
 However the converse is unclear:
 \begin{question}[Volume convergence]
 Given a group homology class $h$ and a spherical Plateau solution $C_\infty$ for $h$, is it always true that $$ \mathbf{M}(C_\infty) = \spherevol(h)?$$ 
 \end{question}

An $n$-dimensional integral current without boundary $S$ in a complete metric space $(E,d)$ is called mass-minimizing if for any $(n+1)$-dimensional integral current $D$ supported in $(E,d)$, $\mathbf{M}(S)\leq \mathbf{M}(S+\partial D)$. 
In striking contrast with the finite dimensional compact case, there is no non-trivial mass-minimizing integral current without boundary inside the manifold $ {S^\infty}/\lambda_\Gamma(\Gamma)$ when $\Gamma$ is torsion-free. In particular, a spherical Plateau solution $C_\infty$ for $h$ is in general not isometrically embedded inside $ {S^\infty}/\lambda_\Gamma(\Gamma)$ as a cycle in $\mathscr{C}(h)$. That fact follows from the existence of a distance decreasing flow on the subset of nonnegative functions of $ {S^\infty}/\lambda_\Gamma(\Gamma)$, as explained in Remark \ref{not achieved}. This raises the following:
 \begin{question}[Variational structure]
Does any spherical Plateau solution isometrically embed as a mass-minimizing integral current in a quotient of a Hilbert  unit sphere?
 \end{question}

Instead of focusing on a fixed spherical Plateau solution, for a given dimension, one can instead look at the set of all spherical volumes of group homology classes, and the space of all spherical Plateau solutions. In Section \ref{section:uniqueness3}, we will establish the existence of accumulation phenomena for spherical Plateau solutions in all dimensions at least $4$.

\begin{remarque}
All the above definitions for the spherical Plateau problem can be extended or modified to study general orthogonal representations different from the regular representation.
\end{remarque}




\subsection{More details on the set of cycles $\mathscr{C}(h)$.} \label{def of C}

Consider a countable group $\Gamma$. Let us say more about the natural map 
\begin{equation}\label{morphism}
\hat{\pi}: H_*(\Gamma;\mathbb{Z}) \to \mathbf{H}_*( {S^\infty}/\lambda_\Gamma(\Gamma)).
\end{equation}
If $\Gamma$ is finite, there is a point in the finite dimensional sphere $ {S^\infty}$ which is fixed by the whole group $\Gamma$ so it is natural to define $\hat{\pi}$ to be the trivial map sending everything to $\{0\}$.
Assume that $\Gamma$ is infinite. In that case, set
 $$   {S^{\infty,*}} :=\{ x\in  {S^\infty};\quad \text{there is no $g\neq 1$ such that $\lambda_\Gamma(g)x=x$} \}.$$
Then one can check that the restriction of the action of $\Gamma$ on $   {S^{\infty,*}}$ is proper free, $   {S^{\infty,*}}$ is still weakly contractible (i.e. all the higher homotopy groups are trivial) and $   {S^{\infty,*}}/\lambda_\Gamma(\Gamma)$ is a non-complete, infinite-dimensional, Hilbert Riemannian manifold and a classifying space for $\Gamma$.
Since $   {S^{\infty,*}}/\lambda_\Gamma(\Gamma)$ is an open Riemannian manifold, it is well-known that $\mathbf{H}_*(  {S^{\infty,*}}/\lambda_\Gamma(\Gamma))$ is isomorphic to the singular homology groups $H_*(  {S^{\infty,*}}/\lambda_\Gamma(\Gamma);\mathbb{Z})$ \cite{RS09}.
Recall that for any classifying space $B\Gamma$, by definition $$H_*(\Gamma;\mathbb{Z}) = H_*(B\Gamma;\mathbb{Z}).$$
Then we set 
$$\hat{\pi} : H_*(\Gamma;\mathbb{Z}) = H_*(  {S^{\infty,*}}/\lambda_\Gamma(\Gamma);\mathbb{Z}) =\mathbf{H}_*(  {S^{\infty,*}}/\lambda_\Gamma(\Gamma)) \to \mathbf{H}_*( {S^\infty}/\lambda_\Gamma(\Gamma))$$
where the last arrow is induced by the inclusion map $  {S^{\infty,*}}/\lambda_\Gamma(\Gamma) \subset  {S^\infty}/\lambda_\Gamma(\Gamma)$.
When $\Gamma$ is torsion-free infinite, $\hat{\pi}$ is an isomorphism because $  {S^{\infty,*}}/\lambda_\Gamma(\Gamma) =  {S^\infty}/\lambda_\Gamma(\Gamma)$.

Let $h\in H_n(\Gamma;\mathbb{Z})$. Our definition of the family of cycles $\mathscr{C}(h)$ is then the following: 
\begin{align*}
\mathscr{C}(h) := \{& \text{integral currents with compact support in $ {S^\infty}/\lambda_\Gamma(\Gamma)$}\\
&\text{representing the class $\hat{\pi}(h)\in \mathbf{H}_n( {S^\infty}/\lambda_\Gamma(\Gamma))$}\}.
\end{align*}

\subsection{Basic examples}

\subsubsection{Manifolds}
To any smooth closed oriented $n$-manifold $M$ corresponds a spherical Plateau problem.
Indeed, let $\Gamma:=\pi_1(M)$.
By elementary topology, there is a canonical homotopy class of continuous maps from $M$ to a classifying space $B\Gamma$ for $\Gamma$ which induces an isomorphism on fundamental groups. This homotopy class determines a unique homology class 
$$h_M\in H_n(\Gamma;\mathbb{Z}) =  H_n(B\Gamma;\mathbb{Z})$$
called the induced class. Often we will make use of the following notation
$$\spherevol(M):=\spherevol(h_M),$$
and the spherical Plateau solutions for $h_M$ will alternatively be called spherical Plateau solutions for $M$.

One could hope that the area-minimization process in the spherical Plateau problem ``simplifies'' the topology of $M$.
\begin{question}[Complexity]
How does the topological complexity of a closed oriented manifold $M$ compares with that of its spherical Plateau solutions?
 \end{question}

\begin{question}[Stabilization]
Consider a closed oriented manifold $M$ and its induced homology class $h_M\in H_n(\Gamma;\mathbb{Z})$. 
How do $\spherevol(m h_M)$ and its corresponding spherical Plateau solutions behave with respect to the integer $m$? What are the properties of $\lim_{m\to \infty} \frac{1}{m} \spherevol(m h_M)$? 
 \end{question}
 
For related results about multiples of homology classes in finite-dimensional Riemannian manifolds, see \cite{Federer74} \cite{Lawson75} \cite{Morgan84} \cite{White84} \cite{Liu23}. For progress on similar questions for the volume entropy, see \cite{BS23} and references therein.

\subsubsection{Amenable groups}
Suppose $\Gamma$ is finite and let $h\in H_n(\Gamma;\mathbb{Z})$.
Then by definition of $\mathscr{C}(h)$ (see Subsection \ref{def of C}), the zero integral current in $ {S^\infty}/\lambda_\Gamma(\Gamma)$ is in $\mathscr{C}(h)$, so $\spherevol(h)=0$.

If $\Gamma=\mathbb{Z}$ and $h\in H_1(\Gamma;\mathbb{Z})$ (for instance if $M=S^1$ and $h=h_M$), then $\spherevol(h)=0$ and the only spherical Plateau solution is the zero integral current space. To see this, one can represent $h$ by a disjoint union of embedded oriented loops $c_1,...,c_L \subset   {S^{\infty}}/\lambda_\Gamma(\Gamma)$,
where each $c_i$ is the projection of some segment in $   {S^{\infty}}$ joining an element $f$ to $\gamma_i .f$ where $\gamma_i \in \Gamma$ depends on $c_i$.  Now one can separately move each $c_i$ homotopically to a curve of arbitrarily small length, since for each $i$ there is $f_i$ such that $\|f_i - \gamma_i.f_i\|_{\ell^2}$ is arbitrarily small. 

In fact, these arguments can be generalized to any amenable groups thanks to the so-called Dixmier condition: a finitely generated group $\Gamma$ is \emph{amenable} if and only if for any $\epsilon>0$ and any finite subset $S\subset \Gamma$, there is $u\in  {S^\infty}$ such that $\|s.u-u\|_{\ell^2}\leq \epsilon$ for all $s\in S$.

 In particular it is possible to show that if $\Gamma$ is amenable, then $\spherevol(h)=0$ for all $h\in H_*(\Gamma;\mathbb{Z})$.
With a bit more work, one can work out a vanishing theorem analogous to the vanishing theorem for the simplicial volume due to Gromov  \cite[Section 3.1]{Gromov82}.
In that sense the spherical Plateau problem, like many geometric invariants (simplicial volume, volume entropy etc.), is only sensitive to ``large'' groups.
\begin{question}[Vanishing of spherical volume]
What are useful characterizations of group homology classes with vanishing spherical volume? \end{question}

\subsection{Some distance non-increasing maps} \label{some distance non increasing maps}
In this subsection, we collect some useful distance non-increasing maps.

\subsubsection{Group homomorphisms}

Let $F,G$ be countable groups with regular representations $\lambda_F,\lambda_G$, and a homomorphism $\theta: F\to G$.
The homomorphism $\theta$ induces a map sending $u\in \ell^2(F)$ to $\Theta(u)\in \ell^2(G)$ defined by
\begin{equation} \label{homomdecrease}
\Theta(u)(y) := 
\left\{ \begin{array}{rcl}
\big( \sum_{x\in \theta^{-1}(y)} |u(x)|^2 \big)^{1/2} & \mbox{if} & y\in \theta(F) \\ 
0 & \mbox{if} & y\notin \theta(F)
\end{array}\right..
\end{equation}
By Cauchy-Schwarz, this map is distance non-increasing.
One checks directly that this map is also $F$-equivariant, which implies that there is an induced map 
$$\Theta :  {S^\infty}/\lambda_F(F) \to  {S^\infty}/\lambda_G(G)$$ which is still distance non-increasing.

Now let $h\in H_n(F;\mathbb{Z})$ and let $\theta_*(h) \in H_n(G;\mathbb{Z})$ be the class equal to the push-forward of $h$ by $\theta$. 
From the above, one can show that:
\begin{equation}  \label{homomorphism}
\spherevol(h) \geq  \spherevol(\theta_*(h)).
\end{equation}

\subsubsection{Absolute value} \label{subsub absolute}

Let $\Gamma$ be countable, $h\in H_n(\Gamma;\mathbb{Z})$. 
By a similar argument using Cauchy-Schwarz, note that there is a distance non-increasing equivariant map 
$$\mathcal{A} :  {S^\infty}\to  {S^\infty}$$
$$u\mapsto |u|,$$
which descends to a distance non-increasing map 
\begin{equation}\label{overlinepsi}
\mathcal{A} : {S^\infty}/\lambda_\Gamma(\Gamma)\to  {S^\infty}/\lambda_\Gamma(\Gamma).
\end{equation}

\subsubsection{Spherical convolution}

Suppose here that $\Gamma$ is countably infinite and has no torsion.
Let  $\tilde{\eta}:\Gamma\to [0,1]$  be a function with $\sum_\gamma \tilde{\eta}(\gamma) =1$,
and suppose also that $\tilde{\eta}$ is strictly positive everywhere.  The ``spherical convolution'' of $f_0$ by $\tilde{\eta}$ is defined as
$$\tilde{\eta}\star f_0(\gamma) := \big[\sum_{\gamma'} |f_0(\gamma')|^2 \tilde{\eta}(\gamma'^{-1}\gamma)\big]^{1/2}.$$
Whenever $f_0\not\equiv 0$, $\tilde{\eta}\star f_0$ has support the whole group $\Gamma$. 
By Cauchy-Schwarz,
\begin{equation} \label{spherical convol}
\star_{\tilde{\eta}} : f_0\mapsto \tilde{\eta}\star f_0
\end{equation}
is a $\Gamma$-equivariant distance non-increasing map which preserves the $\ell^2$ norm of functions (essentially this is the discrete version of \cite[Remarque 2.7]{BCG95}).
Hence $\star_{\tilde{\eta}}$ induces a well-defined distance non-increasing map from $ {S^\infty}/\lambda_\Gamma(\Gamma)$ to itself, homotopic to the identity. Importantly, one checks that $\star_{\tilde{\eta}} $ is differentiable on $ {S^\infty}$, and its differential strictly decreases the norm of any tangent vector at any point of $ {S^\infty}$. In particular, $\star_{\tilde{\eta}}$  is strictly distance decreasing.

\begin{remarque}\label{not achieved}
\emph{Why is the intrinsic flat topology needed?}
When defining spherical Plateau solutions, one might hope that they are embedded in the spherical quotient $ {S^\infty}/\lambda_\Gamma(\Gamma)$, but this is not possible for the following reason. 
Consider any non-zero integral current $C\in \mathscr{C}(h)$ in the spherical quotient $ {S^\infty}/\lambda_\Gamma(\Gamma)$, where $\Gamma$ is infinite  torsion-free and $h\in H_n(\Gamma;\mathbb{Z})$. By 
applying the spherical convolution $\star_{\tilde{\eta}} $ with $\tilde{\eta}$ as above, we get a current $(\star_{\tilde{\eta}})_\sharp(C)$ which still belongs to $ \mathscr{C}(h)$ but with strictly smaller mass by the area formula reviewed in Subsection \ref{subsection:area}. As a consequence, whenever $h$ is nonzero, no element of $\mathscr{C}(h)$ can actually achieve the equality $\mathbf{M}(C) = \spherevol(h)$. 
More generally, this argument shows that there is no non-trivial mass-minimizing integral current in $ {S^\infty}/\lambda_\Gamma(\Gamma)$!
This explains why notions such as ``intrinsic flat convergence'', ``integral current spaces'' \cite{SW11} or ``ultralimit''  are necessary in formulating the spherical Plateau problem.
\end{remarque}

\subsection{Besson-Courtois-Gallot's definition of the spherical volume} \label{original def}

There are several natural ways to define the setup for the spherical Plateau problem. Conjecturally, many of those setups lead to \emph{equivalent} notions of spherical volume and spherical Plateau solutions, whenever they make sense.
In this subsection, we recall the original setup of Besson-Courtois-Gallot in the case of smooth manifolds, which inspired the definition of the spherical Plateau problem.

Let $(M,g)$ be a closed, oriented, smooth Riemannian $n$-manifold with fundamental group $\Gamma:= \pi_1(M)$, let $(\tilde{M},g)$ be its universal cover.
Let  $D_M\subset \tilde{M}$ be a Borel fundamental domain and let $S_2(\tilde{M},g)$ be the unit sphere of $L^2(\tilde{M},g)$, endowed with the standard metric $\mathbf{g}_{L^2(\tilde{M},g)}$. 
There is a natural action $\lambda_{(\tilde{M},g)}$ of $\Gamma$ by isometries on $S_2(\tilde{M},g)$, and changing the metric $g$ yields $\Gamma$-equivariantly isometric spaces, see \cite[Section 3, I]{BCG91}. 
Besson-Courtois-Gallot  \cite[Section 3, I]{BCG91} defined the spherical volume of $M$ as
\begin{align*}
&\spherevol_{BCG}(M)  \\ &:=  \inf\{ \Vol(D_M,\phi^*\mathbf{g}_{L^2(\tilde{M},g)}); \quad \text{ $\phi:\tilde{M}\to S_2(\tilde{M},g)$ is a $\Gamma$-equivariant immersion}\}.
\end{align*} 
In fact, we could have used $S^\infty\subset \ell^2(\Gamma)$,  the regular representation $\lambda_{\Gamma}$, and $\Gamma$-equivariant smooth maps, instead of respectively $S_2(\tilde{M},g) \subset L^2(\tilde{M},g)$, $\lambda_{(\tilde{M},g)}$, and $\Gamma$-equivariant immersions, without changing the value of $\spherevol_{BCG}(M)$.
 
By \cite{BCG91}, the simplicial volume \cite{Gromov82} of a closed oriented $n$-manifold $M$ is related to the spherical volume of Besson-Courtois-Gallot by
$$\|M\| \leq C_n \spherevol_{BCG}(M).$$

It is not hard to see that 
$$\spherevol_{BCG}(M) \geq \spherevol(M).$$
If the fundamental group $\Gamma$ is torsion-free,
it is possible to show that \emph{equality} holds:
this  non-trivial result is essentially contained in \cite[Lemma 3.10]{Babenko06}\cite[Section 2]{Brunnbauer08} when the dimension of $M$ is at least $3$. 
\begin{question}[Equivalence of definitions]
Do we always have 
$$\spherevol_{BCG}(M) = \spherevol(M)?$$
\end{question}

\subsection{Intrinsic isomorphism} \label{Plateau topology}

It is sometimes natural and helpful to consider spherical Plateau solutions up to ``intrinsic isomorphism'', and discard some of the non-infinitesimal information. Indeed under such an equivalence relation, uniqueness and rigidity properties emerge naturally, as we will see in Sections \ref{section:uniqueness}, \ref{section:uniqueness2} and \ref{section:uniqueness3}. 
Given a metric space $(X,d)$, the intrinsic metric on $X$ induced by $d$ is denoted by $L_{d}$ \cite[Chapter 2, Section 2.3]{BBI22}. By convention, the $L_d$-distance between points of different path connected components of $(X,d)$ is $\infty$.   
Note that the identity map  
$$\id: (X,L_d)\to (X,d)$$ is always $1$-Lipschitz. 

Consider an integral current space $C=(X,d,T)$ and an oriented complete finite volume Riemannian manifold $(N,g_N)$ which is not necessarily connected, which induces the integral current space $(N,\dist_{g_N},\llbracket 1_N\rrbracket)$.
\begin{definition} \label{definition:intrinsic isomorphism}
We say that $C=(X,d,T)$ is \emph{intrinsically isomorphic} to $(N,g_N)$.   
if there is an isometry 
$$\varphi: (N,\dist_{g_N}) \to (X,L_d)$$ 
such that 
$$(\id\circ \varphi)_\sharp \llbracket 1_N\rrbracket = T.$$
\end{definition}
One could also try to formulate more general definitions of the type ``Two integral current spaces are intrinsically  isomorphic if...''.
For clarity, we emphasize that ``intrinsically isomorphic'' does not imply ``at intrinsic flat distance $0$ from each other''.


 \section{Spherical Plateau solutions for hyperbolic manifolds} \label{section:uniqueness}

In this section, we review how Besson-Courtois-Gallot compute the spherical volume of hyperbolic manifolds and how it is used in their proof of the volume entropy inequality. 
Then we outline a proof of our first main theorem, the uniqueness of spherical Plateau solutions for hyperbolic manifolds, up to intrinsic isomorphism. This result in turn leads to an area rigidity property for the regular representation of fundamental groups of hyperbolic manifolds.
All those discussions  can be adapted to the general rank one locally symmetric case.
 
\subsection{The spherical volume of hyperbolic manifolds} 

The spherical volume of closed oriented hyperbolic manifolds was computed by Besson-Courtois-Gallot.
For completeness, we outline their proof in our setting when the dimension is at least $3$.
\begin{theo} \cite{BCG95,BCG96} \label{theorem:bcg}
Let $(M,g_0)$ be a closed oriented hyperbolic manifold of dimension at least $3$. 
Then 
\begin{equation} \label{bcg}
\spherevol(M) = \Vol(M,\frac{(n-1)^2}{{4n}}g_0).
\end{equation}
\end{theo}

\begin{proof}
Let us sketch the proof, which is due to Besson-Courtois-Gallot \cite[Sections 5 and 6]{BCG95}. Suppose that the dimension of $M$ is at least $3$, let $\Gamma:=\pi_1(M)$ and let $h_M\in H_n(\Gamma;\mathbb{Z})$ be the induced homology class.

Let $C\in \mathscr{C}(h_M)$ be a cycle in $ {S^\infty}/\lambda_\Gamma(\Gamma)$ representing $h_M$.
It is convenient to consider cycles which are polyhedral chains. Recall from Lemma \ref{approx polyh} that $C$ can be nicely approximated by a polyhedral chain  $P\in \mathscr{C}(h_M)$. Moreover, by a further perturbation if necessary, we get another polyhedral chain $P'\in \mathscr{C}(h_M)$ so that $\spt(P') \subset \mathbb{S}^+/\lambda_\Gamma(\Gamma)$ where $\mathbb{S}^+$ is the set of functions with finite support (see Section \ref{appendix b}). Afterwards, we will assume without loss of generality that $C$ is such a polyhedral chain.

Fix a point $o\in \tilde{M}$ in the universal cover of $M$, and let $\mathbf{p}\in M$ be its projection in $M$. Let 
$$\mathrm{Bar}: \mathbb{S}^+/\lambda_\Gamma(\Gamma) \to M$$
be the barycenter map whose definition is given in Section \ref{appendix b}. 
Recall that the restriction of $\mathrm{Bar}$ to $C$ is a Lipschitz map.
By $\Gamma$-equivariance of $\mathrm{Bar}: \mathbb{S}^+ \to \tilde{M}$, 
\begin{equation} \label{1_M}
\mathrm{Bar}_\sharp(C) = \llbracket 1_M \rrbracket.
\end{equation}

For almost every point $q\in \spt(C)$, $C$ admits a tangent $n$-plane at $q$.
The $n$-dimensional Jacobian of ${\mathrm{Bar}}$ along the tangent $n$-plane is well-defined and is bounded from above by $\big(\frac{4n}{(n-1)^2}\big)^{n/2}$ thanks to  (\ref{jacbar<}) in Lemma \ref{astuce}. This implies by (\ref{1_M}) and the area formula in Subsection \ref{subsection:area} that
$$\mathbf{M}(C) \geq    \big(\frac{(n-1)^2}{{4n}}\big)^{n/2} \Vol(M,g_0).$$
Since $C\in \mathscr{C}(h_M)$ has mass arbitrary close to $\spherevol(M)$, we conclude that
$$\spherevol(M) \geq  \Vol(M,\frac{(n-1)^2}{{4n}}g_0).$$

The inverse inequality 
$$\spherevol(M) \leq  \Vol(M,\frac{(n-1)^2}{{4n}}g_0)$$
directly follows from a general inequality between the spherical volume and the volume entropy of a closed Riemannian manifold, see \cite[Corollary 3.13]{BCG91}. It states that if $(M,g)$ is a closed oriented Riemannian $n$-manifold, then 
\begin{equation}\label{bcgse}
\spherevol(M) \leq \Vol(M,\frac{h(g)^2}{4n}g)
\end{equation}
where $h(g)$ is the volume entropy of the Riemannian metric $g$ (the definition is recalled in (\ref{def entropy})). Note that in \cite[Section 3, I]{BCG91}, $\spherevol(M)$ is denoted $T(M)$. 
The proof of (\ref{bcgse}) is based on the following maps.
Denote by $D_M$ a Borel fundamental domain in the universal cover $\tilde{M}$ for the action of $\Gamma$ and let $\gamma.D_M$ be its image by an element $\gamma\in \Gamma$.  For $c>h(g)$, defined the equivariant map
\begin{align}\label{poissonc}
\begin{split}
\mathcal{P}_c &:  \tilde{M} \to   {S^\infty}\subset \ell^2(\Gamma) \\
 & x\mapsto \{\gamma \mapsto \frac{1}{\|e^{-\frac{c}{2}\dist_{g}(x,.)}\|_{L^2(\tilde{M},g)}}   \big[\int_{\gamma. D_M}  e^{-c\dist_{g}(x,u)} \dvol_g(u)\big]^{1/2}\}.
 \end{split}
 \end{align}
Properties of $\mathcal{P}_c$  \cite[Proof of Lemma 3.1]{BCG91} \cite[Lemma 3.1]{Antoine23b} imply that 
$$ \liminf_{c\to h(g)} \mathbf{M}((\mathcal{P}_c)_\sharp \llbracket 1_M\rrbracket) \leq \frac{h(g)^n}{2^n n^{n/2}} \Vol(M,g).$$
Since by equivariance, $(\mathcal{P}_c)_\sharp \llbracket 1_M\rrbracket $ belongs to $\mathscr{C}(h_M)$, the above inequality shows (\ref{bcgse}).


\end{proof}

The analogue of Theorem \ref{theorem:bcg} for closed oriented surfaces was shown in \cite[Proposition 3.9]{BCG91} by methods specific to the $2$-dimensional case.
Theorem \ref{theorem:bcg} also extends directly to rank one (i.e. negatively curved) locally symmetric manifolds \cite{BCG95,BCG96,Ruan22}. 
As for higher rank locally symmetric manifolds, following the logic of Besson-Courtois-Gallot \cite{BCG95} (see Subsection \ref{and back again}), a big step towards the entropy rigidity conjecture in higher rank  \cite[Question (5)]{BCG96}\cite{CF02} would be the computation of their spherical volumes:
\begin{question}[Spherical volume in higher rank]
What is the spherical volume of a closed oriented locally symmetric manifold of higher rank?
\end{question}

\subsection{From spherical volume to volume entropy and back again} \label{and back again}
Given  a closed manifold $M$, if $g$ is a Riemannian metric on $M$, let $h(g)$ denote its volume entropy:
\begin{equation}\label{def entropy}
h(g):=\lim_{R\to \infty} \frac{\log \Vol(\tilde{B}_g(o,R))}{R}
\end{equation}
where $(\tilde{B}_g(o,R))$ denotes the geodesic $R$-ball centered at some point $o$ in the universal cover $(\tilde{M},g)$ of $(M,g)$. 
The fundamental volume entropy inequality for hyperbolic manifolds \cite{BCG95,BCG96} states that if $(M,g_0)$ is a hyperbolic manifold of dimension $n\geq 2$, and if $g$ is any other metric on $M$, then
$$h(g)^n\Vol(M,g)\geq h(g_0)^n\Vol(M,g_0).$$ 
Equivalently, if $g$ is normalized to have volume entropy $h(g_0)$, then
\begin{equation}\label{volvol}
\Vol(M,g)\geq \Vol(M,g_0).
\end{equation}
Moreover when $n\geq 3$, equality holds exactly when $g$ is isometric to $g_0$. A slight extension of this theorem implies the classical Mostow's rigidity theorem \cite{BCG95}.

Nowadays, the proof of Besson-Courtois-Gallot \cite{BCG95,BCG96} in the case $n\geq3$ is primarily remembered for their barycenter map. Yet, a pivotal insight in the original paper \cite{BCG95} is that determining the minimal volume entropy of hyperbolic manifolds can be reduced to the computation of their spherical volume.
This aspect of their work is rooted in the theory of minimal surfaces and calibrations, and should be recalled.  We can summarize their strategy as follows. Let $M$ be as before and let $\Gamma:=\pi_1(M)$. If $(M,g)$ is normalized so that $h(g)=h(g_0)=n-1$, then by using properties of the maps $\mathcal{P}_c$ defined in (\ref{poissonc}) for $c>h(g_0)$  \cite[Proof of Lemma 3.1]{BCG91} \cite[Lemma 3.1]{Antoine23b}, the image $\mathcal{P}_c(M)$ inside $ {S^\infty}/\lambda_\Gamma(\Gamma)$ has volume at most $\frac{c^n}{2^nn^{n/2}} \Vol(M,g)$. But since $\mathcal{P}_c(M)$ determines a cycle in $\mathscr{C}(h_M)$, by Theorem \ref{theorem:bcg}, the volume of $\mathcal{P}_c(M)$ is at least the spherical volume $\frac{(n-1)^n}{2^nn^{n/2}} \Vol(M,g_0)$. As $c$ can be taken arbitrarily close to $n-1$, we find that (\ref{volvol}) is true. The rigidity part is shown as follows: if equality holds, then as $c\to n-1$, the composition $\mathrm{Bar}\circ \mathcal{P}_c$ is almost a Riemannian isometry and converges to a Riemannian isometry, so in the limit we conclude that $g$ is isometric to $g_0$.

Actually, the summary above deviates a bit from the original presentation of Besson-Courtois-Gallot: in \cite{BCG95}, the authors use a spherical quotient different from the quotient $ {S^\infty}/\lambda_\Gamma(\Gamma)$ that we choose to work with, and they use an explicit minimal isometric embedding of $(M,\frac{(n-1)^2}{4n}g_0)$ into that spherical quotient. Let us describe those geometric objects. 
Consider the universal cover $(\tilde{M},g_0)$ with the hyperbolic metric. Fix a basepoint $\mathbf{o}\in \tilde{M}$, let $\partial \tilde{M}$ be the boundary at infinity of $\tilde{M}$ with the standard probability measure determined by $\mathbf{o}$, and let $S_2(\partial \tilde{M})$ be the unit sphere in $L^2(\partial \tilde{M})$.  For any $\theta\in \partial \tilde{M}$, the corresponding Busemann function is defined for any $x\in \tilde{M}$ as
$$B_\theta(x) := \lim_{t\to \infty} (\dist_{g_0}(y,c(t)) -t)$$  
where $c:[0,\infty)$ is the half-geodesic starting at $\mathbf{o}$, and converging to $\theta$. 
The group $\Gamma$ acts naturally on $\tilde{M}$ and $\partial \tilde{M}$. There is also a natural proper, free, isometric action $\rho_B$ of $\Gamma$ on $S_2(\partial \tilde{M})$ given, for all $\gamma\in \Gamma$, $f\in S_2(\partial \tilde{M})$, by
\begin{equation}\label{boundary repa}
\rho_B(\gamma). f (\theta) = f(\gamma^{-1}(\theta)) e^{-\frac{n-1}{2}B_\theta(\gamma(\mathbf{o}))},
\end{equation}
see \cite[Lemme 2.2]{BCG95}. 
This action $\rho_B$ is also called a boundary representation of $\Gamma$. It is possible to show that the spherical quotient $S_2(\partial \tilde{M})/\rho_B(\Gamma)$ is not isometric to our spherical quotient $ {S^\infty}/\lambda_\Gamma(\Gamma)$ defined with the regular representation of $\Gamma$. Nevertheless, these two spaces are closely related, as $S_2(\partial \tilde{M})/\rho_B(\Gamma)$ is contained in the ``ultralimit of $ {S^\infty}/\lambda_\Gamma(\Gamma)$''. 

Consider the following embedding
\begin{equation}\label{special emb p}
\begin{split}
\mathscr{P} : (\tilde{M},\frac{(n-1)^2}{4n}g_0) \to S_2(\partial \tilde{M})\\
\mathscr{P}(x) := \{\theta \mapsto e^{-\frac{n-1}{2}B_\theta(x)}\}.
\end{split}
\end{equation}
It is not too hard to check that $\mathscr{P} $ is an isometric and minimal embedding.
As explained in \cite[Section 2]{BCG95}, this map is equivariant, and so after taking the quotient by $\Gamma$ and rescaling the metric $g_0$, we get a map
$$\mathscr{P} : (M,\frac{(n-1)^2}{4n}g_0) \to S_2(\partial \tilde{M})/\rho_B(\Gamma).$$
Again, this $\mathscr{P} $ is an isometric and minimal embedding. Strikingly, Besson-Courtois-Gallot discovered \cite[Proposition 5.7, Section 6]{BCG95} that  $\mathscr{P}(M)$ is an area-minimizing $n$-submanifold of  the Hilbert Riemannian manifold $S_2(\partial \tilde{M})/\rho_B(\Gamma)$, by arguing that $\mathscr{P}(M)$ is in fact a calibrated submanifold. 
When $n\geq 3$, the calibration is essentially constructed as the pull-back of the volume form on $(M,g_0)$ by a barycenter map   $$\mathrm{Bar}:S_2(\partial \tilde{M})/\rho_B(\Gamma)\to (M,g_0).$$

Now, here is the relevance of $\mathscr{P}(M)$ and our discussion about  the spherical Plateau problem. If $g=g_0$ then the image $\mathcal{P}_c(M)$ in $ {S^\infty}/\lambda_\Gamma(\Gamma)$ has volume converging to the spherical volume $\frac{(n-1)^n}{2^nn^{n/2}} \Vol(M,g_0)$ as $c\to n-1$. It can be shown that, as integral currents, $\mathcal{P}_c(M)$ converges in the intrinsic flat topology to $\mathscr{P}(M)$. With the vocabulary of Section \ref{definition spp}, this means that there is an explicit minimizing sequence of cycles in $\mathscr{C}(h_M)$ inside $ {S^\infty}/\lambda_\Gamma(\Gamma)$ converging to a spherical Plateau solution which is given by $\mathscr{P}(M)$. 
Is $\mathscr{P}(M)$ the only spherical Plateau solution up to isomorphism when $n\geq3$ (see Question \ref{question:unique})? We will show in Theorem \ref{uniqueness1} that it is the unique one at least up to instrinsic isomorphism.

\subsection{Intrinsic uniqueness and rigidity of spherical Plateau solutions}

We now come to our first new result, which states that spherical Plateau solutions are unique for hyperbolic manifolds, up to intrinsic isomorphism (see Definition \ref{definition:intrinsic isomorphism}):
\begin{theo} \label{uniqueness1}
Let $(M,g_0)$ be a closed oriented hyperbolic manifold of dimension $n\geq 3$. 
 Then any spherical Plateau solution for $M$ is intrinsically isomorphic to $(M,\frac{(n-1)^2}{4n}g_0)$.
\end{theo}

\begin{proof}[Outline of proof]
Let $\Gamma:=\pi_1(M)$, and let $h_M\in H_n(\Gamma;\mathbb{Z})$ be the fundamental homology class, where $n\geq 3$ is the dimension of $M$.
Let $C_i\in \mathscr{C}(h_M)$ be a minimizing sequence, namely
\begin{equation} \label{minim}
\lim\mathbf{M}(C_i) = \spherevol(M) = \Vol(M,\frac{(n-1)^2}{4n}g_0).
\end{equation}
The second equality above is Theorem \ref{bcg}.
By Lemma \ref{approx polyh}, as in the proof of Theorem \ref{bcg}, we can assume $C_i$ to be polyhedral chains, with support in $\mathbb{S}^+/\lambda_\Gamma(\Gamma)$, where $\mathbb{S}^+$ is the set of functions with finite support.

We assume that $C_i$ converges to a spherical Plateau solution 
$$C_\infty=(X_\infty,d_\infty,S_\infty).$$
We use the notation
$$g':=\frac{(n-1)^2}{4n}g_0.$$
Jacobians, lengths and distances will be computed with respect to $g'$. Fix $o\in \tilde{M}$ a point in the universal cover.
Let 
$$\mathrm{Bar} : \spt(C_i) \subset \mathbb{S}^+/\lambda_\Gamma(\Gamma)\to M$$
be the barycenter map defined in Section \ref{appendix b}. 

The Jacobian bound (\ref{jacbar<}) on the barycenter map in Lemma \ref{astuce}, (\ref{1_M}) and (\ref{minim}) imply that, as $i\to \infty$, on a larger and larger region of the polyhedral chain $\spt(C_i)$, the Jacobian bound is almost tight. In particular there is a ``good region'' $\Omega_i\subset \spt(C_i)$ such that the mass of $C_i\llcorner \Omega_i$ converges to $\spherevol(M)$, the Jacobian of $\mathrm{Bar}$ converges to $1$ on $\Omega_i$ (with respect to the metric $g'$), and $\mathrm{Bar}$ is injective on $\Omega_i$.

The barycenter map $\mathrm{Bar} : \spt(C_i) \to M$ is not uniformly Lipschitz as $i\to \infty$. Nevertheless, by Lemma \ref{astuce}, it is indeed uniformly Lipschitz in small neighborhoods of $\Omega_i$. Besides, it can be shown using Lemma \ref{astuce} that the barycenter map is almost $1$-Lispchitz in small balls near $\Omega_i$.

Take $D_i$ the restriction of $C_i$ to an $\tilde{r}$-neighborhood of the good region $\Omega_i\subset \spt(C_i)$, for some small fixed $\tilde{r}>0$. By construction, $D_i$ still converges in the intrinsic flat topology to $C_\infty$. The upshot is that the mass of the boundary $\partial D_i$ converges to $0$, the barycenter map restricted to $\spt(D_i)$ has a uniform Lipschitz bound and is almost $1$-Lispchitz in small balls near $\Omega_i$.
Those properties will be important in the next steps.

By the previous paragraph, one can embed the sequence of integral currents $D_i$ in a Banach space where they converge in the flat topology to $C_\infty=(X_\infty,d_\infty,S_\infty)$ viewed as an integral current in that Banach space. The barycenter map $\mathrm{Bar}$ is well-defined on each $\spt(D_i)$ so by an Arzel\'{a}-Ascoli type argument and the uniform Lipschitz control \cite[Theorem 6.1]{Sor18}, there is a natural Lipschitz ``limit barycenter map'' 
$$\mathrm{Bar}_\infty :\spt S_\infty \to M$$ where the support $\spt S_\infty$ coincides with the completion of $(X_\infty,d_\infty)$. Moreover, by slightly extending  \cite[Lemma 7.3]{GCS23}, it can be shown that 
\begin{equation}\label{coherent}
(\mathrm{Bar}_\infty)_\sharp S_\infty= \llbracket 1_M\rrbracket.
\end{equation}

By Lemma \ref{astuce}, when the Jacobian bound for the barycenter map is almost saturated, the differential of $\mathrm{Bar}$ is almost a linear isometry. Since the Jacobian bound for $\mathrm{Bar}$ indeed becomes arbitrarily close to the sharp upper bound on $\Omega_i\subset \spt(D_i)$ as $i\to \infty$, $\mathrm{Bar}$ is almost a Riemannian isometry for large $i$.
So intuitively, we expect that the differential of the limit barycenter map $\mathrm{Bar}_\infty$ is exactly a linear isometry at any point of $\spt S_\infty$, namely that $\mathrm{Bar}_\infty$ is a Riemannian isometry. This would essentially finish the proof. That sketch would work if the convergence of $C_i$ to $C_\infty$ was known to be smooth (at least outside of a small singular set) \cite{BBCG12}, or if the limit map was known to be $1$-Lipschitz (see for instance for such Lipschitz-volume rigidity results \cite[Proposition C.1]{BCG95}, \cite[Sections 3, 4, 5]{BBCG12}, \cite[Theorem 1.1]{DNP23}, \cite[Theorem 1.1]{GCS23} and \cite[Theorem 1.2]{Zus23}). However in our situation $C_\infty=(X_\infty,d_\infty,S_\infty)$ is a priori just an integral current to which $C_i$ converges weakly, and the limit map is never $1$-Lipschitz in our case (even though it will be shown a posteriori to be $1$-Lipschitz with respect to the intrinsic metrics).

In order to show that 
$$\mathrm{Bar}_\infty : \spt S_\infty \to (M,g')$$
is an isometry for the respective intrinsic metrics, we work with the maps $\mathrm{Bar} : \spt(D_i) \to (M,g')$ instead of directly with the limit map. We prove a general result \cite[Proposition 1.4]{Antoine23b} which roughly says that limits of maps $\varphi_i$ which are uniformly Lipschitz, almost Riemannian isometries, and almost $1$-Lipschitz in small balls, are indeed Riemannian isometries.  This general result can be directly applied to the sequence $\mathrm{Bar} : \spt(D_i) \to (M,g')$. The point of this result is that while the limit map $\mathrm{Bar}_\infty$ is constructed using the extrinsic metrics on $\spt(D_i)$, the intrinsic information about  $\mathrm{Bar}$ (namely the fact that it is almost a Riemannian isometry) still passes to the limit. 

Let us make some comments on the proof of  \cite[Proposition 1.4]{Antoine23b}  in our specific case. Since $\mathrm{Bar}$ is almost $1$-Lipschitz in small balls near $\Omega_i$, it is simple to check that $\mathrm{Bar}_\infty$ is $1$-Lipschitz for the intrinsic metric $L_{d_\infty}$ on $\spt S_\infty$. Note that we cannot apply Lipschitz-volume rigidity results like \cite[Theorem 1.2]{Zus23}, because we do not know if a priori there is an integral current $T$ on the completion of $(\spt S_\infty,L_\infty)$ such that $(\mathrm{Bar}_\infty)_\sharp T = \llbracket 1_M\rrbracket$, and if volumes for the new metric are preserved. Instead, we argue directly as follows.
We want to show that conversely $\mathrm{Bar}_\infty$ does not decrease distances for the intrinsic metrics: given $x,y\in \spt S_\infty$, and a geodesic segment $\sigma$ between $\mathrm{Bar}_\infty(x)$ and $\mathrm{Bar}_\infty(y)$ in $(M,g')$, we want to lift $\sigma$ to a segment of  same length in $\spt S_\infty$. Technically, the main tool is the coarea formula 
reviewed in Subsections \ref{subsection:area} and 
Sard's lemma. As an application, for each $i$, we can perturb a bit $\sigma$ to a nearby curve $\sigma_i$ such that the preimage
$$\varkappa_i :=\mathrm{Bar}^{-1}(\sigma_i) \cap \spt(D_i)$$
is a rectifiable curve contained inside $\spt(D_i)$ 
of length converging to $\length_{g'}(\sigma)$ as $i\to \infty$. Heuristically, this is because the coarea formula ensures that most of $\varkappa_i $ is contained in the good region $\Omega_i$, where the differential of $\mathrm{Bar}$ is almost a linear isometry.
Moreover $\varkappa_i$ is the union of a segment with two endpoints and some closed curves of small lengths, and the segment component converges as $i\to \infty$ to a rectifiable segment inside the support $\spt S_\infty$ with endpoints $x$ and $y$, and length equal to $\length_{g'}(\sigma)$. 

After proving that $\mathrm{Bar}_\infty : \spt S_\infty \to (M,g')$ is an isometry for the respective intrinsic metrics, we can conclude the proof using (\ref{coherent}) and the inverse map $\varphi:=(\mathrm{Bar}_\infty)^{-1}$ from $(M,\dist_{g'})$ to $\spt S_\infty$ in Definition \ref{definition:intrinsic isomorphism}.



\end{proof}

From Theorem \ref{uniqueness1}, we deduce the following \emph{area rigidity} result. Let $(M,g_0)$ be a closed oriented hyperbolic manifold of dimension $n\geq 3$ and set $\Gamma:=\pi_1(M)$. 
A fundamental family of representations of $\Gamma$ is the set of orthogonal representations $\rho:\Gamma\to \End(H)$ which are \emph{weakly equivalent} to the regular representation $\lambda_\Gamma : \Gamma \to \End(\ell^2(\Gamma))$, see \cite[Definition F.1.1]{BDLHV08}.
We say\footnote{Usually, one considers complex Hilbert spaces and unitary representations but in this paper we will only consider real Hilbert spaces and orthogonal representations.} that an orthogonal representation $\rho:\Gamma\to \End(H)$  is weakly equivalent to $\lambda_\Gamma$ if the following holds:  for every $\xi \in H$, every finite subset $Q$ of $\Gamma$, and every $\epsilon>0$, there exist $\eta_1,...,\eta_k$ in $\ell^2(\Gamma)$ such that for all $g\in Q$,
 $$\big| \langle \rho(g)\xi,\xi \rangle - \sum_{j=1}^k \langle \lambda_\Gamma(g)\eta_j,\eta_j\rangle \big| <\epsilon,$$
and vice versa for every $\eta \in \ell^2(\Gamma)$, every finite subset $Q$ of $\Gamma$, and every $\epsilon>0$, there exist $\xi_1,...,\xi_l$ in $H$ such that for all $g\in Q$,
 $$\big| \langle \lambda_\Gamma(g)\eta,\eta \rangle - \sum_{j=1}^l \langle \rho(g)\xi_j,\xi_j\rangle \big| <\epsilon,$$

Now let $\tilde{M}$ be the universal cover of $M$, on which $\Gamma$ acts properly freely, let $g_0$ be the lifted hyperbolic metric on $\tilde{M}$, and let $D_M$ be a Borel fundamental domain in $\tilde{M}$.

\begin{coro}\label{rigidity}
Let $S_H$ be the unit sphere in a Hilbert space $(H,\mathbf{g}_H)$ and let  $\rho:\Gamma\to \End(H)$ be an orthogonal representation weakly equivalent to $\lambda_\Gamma$. Consider a smooth $\Gamma$-equivariant immersion
$$f: \tilde{M}\to S_H.$$
Then 
$$\Vol(D_M,f^*\mathbf{g}_H) \geq \spherevol(M),$$
with equality if and only if
$(f(\tilde{M}),\mathbf{g}_H)$ is an embedded $n$-plane in $S_H$ and is Riemannian isometric to $(\tilde{M},\frac{(n-1)^2}{4n} g_0)$.
\end{coro}

\begin{proof}[Outline of proof]
Consider the infinite direct sum of the regular representation, denoted by
$$\bigoplus^\infty \lambda_\Gamma :\Gamma \to \End(\bigoplus^\infty \ell^2(\Gamma)).$$
Let $(S',\mathbf{g}_{S'})$ be the unit sphere of $\bigoplus^\infty \ell^2(\Gamma)$ with its standard metric.
The natural map  
$$
\mathcal{A}': (a_1,a_2,...) \in S' \mapsto [\sum_{i\geq 1} a_i^2]^{1/2} \in S^\infty\subset \ell^2(\Gamma)
$$
is $\Gamma$-equivariant and  $1$-Lipschitz by the argument of Subsection \ref{subsub absolute}.

The fact that $\rho$ is weakly equivalent to $\lambda_\Gamma$ means that the representation $\rho$ can be arbitrarily well ``approximated'' by $\bigoplus^\infty \lambda_\Gamma$. Hence for any $\epsilon>0$, there exists a smooth $\Gamma$-equivariant map 
$$f_\epsilon : \tilde{M} \to S'$$
which is $\epsilon$-close to $f$ in the $C^1$-topology, in the sense that 
\begin{equation}\label{oplus map}
\|f_\epsilon^* \mathbf{g}_{S'} - f^*\mathbf{g}_H\|_{C^0(D_M)} \leq \epsilon.
\end{equation}

Consider the Lipschitz $\Gamma$-equivariant map 
$$\mathcal{A}' \circ f_\epsilon : \tilde{M} \to S^\infty \subset \ell^2(\Gamma)$$
which after taking the quotient by $\lambda_\Gamma(\Gamma)$, gives a map 
$$M\to S^\infty/\lambda_\Gamma(\Gamma).$$
The image of this map can be identified with a cycle $C\in \mathscr{C}(h_M)$ whose mass satisfies, by (\ref{oplus map}) and the $1$-Lipschitzness of $\mathcal{A}'$:
\begin{equation}\label{mass c compare}
\mathbf{M}(C) \leq \Vol(D_M,f^*\mathbf{g}_H) + c_\epsilon
\end{equation}
for some $c_\epsilon$ converging to $0$ as $\epsilon \to 0$. This establishes the inequality of the statement by definition of $\spherevol(M)$.

As $\epsilon\to \infty$, the cycle $C$ above converges by construction of $f_\epsilon$ to the quotient $f(\tilde{M})/\rho(\Gamma)$ (viewed as an integral current space).
Suppose that equality holds in the statement, namely 
$$\Vol(D_M,f^*\mathbf{g}_H) = \spherevol(M).$$ Then by (\ref{mass c compare}),  $f(\tilde{M})/\rho(\Gamma)$ is  a spherical Plateau solution, so by the uniqueness result, Theorem \ref{uniqueness1}, we conclude that $(f(\tilde{M}),\mathbf{g}_H)$ is an embedded $n$-plane in $S_H$, Riemannian isometric to $(\tilde{M},\frac{(n-1)^2}{4n} g_0)$ as desired.
\end{proof}

Let us emphasize that, by Subsection \ref{and back again}, Corollary \ref{rigidity} formally implies the entropy inequality and rigidity theorem of Besson-Courtois-Gallot \cite{BCG95}.
We may in fact hope for a much stronger \emph{``extrinsic area rigidity''} for $\rho$ in the equality case of Corollary \ref{rigidity}: $\rho$ should contain as a subrepresentation the boundary representation $\rho_B$ defined in (\ref{boundary repa}), and $f(\tilde{M})$ should be equal to the special minimal $n$-plane $\mathscr{P}(\tilde{M})$ defined in (\ref{special emb p}) up to an isometry of the ambient unit sphere. An analogous extrinsic area rigidity property should hold for hyperbolic surfaces, by considering the energy of harmonic maps, instead of the area of $n$-dimensional minimal surfaces. See Questions \ref{question:unique} and \ref{question:unique2} below for a formulation of those conjectures in terms of spherical Plateau solutions.

Details for the proof of Theorem \ref{uniqueness1} appear in \cite{Antoine23b}, where for simplicity we use a slightly different but equivalent definition of spherical Plateau solutions.
All the arguments above extend to the locally symmetric case of rank one, by \cite{BCG95,BCG96,Ruan22}, and so we obtain:
\begin{theo} \label{locsymneg}
If $(M,g_0)$ is a closed oriented locally symmetric manifold of dimension at least $3$, with negative curvature between $-4$ and $-1$, then any spherical Plateau solution for $M$ is intrinsically isomorphic to $(M,\frac{h(g_0)^2}{4n} g_0)$.
\end{theo}

The spherical Plateau solutions in Theorem \ref{locsymneg} are probably unique, i.e. up to isomorphism and not just up to intrinsic isomorphism. The following \emph{Rigidity Conjecture} is perhaps the main question of this paper. Consider $(M,g_0)$,  a closed oriented locally symmetric manifold of rank one of dimension at least $3$ and with fundamental group $\Gamma$.
\begin{question} [Uniqueness] \label{question:unique}
Does $M$ have a unique spherical Plateau solution up to isomorphism? Does extrinsic area rigidity hold for representations $\rho$ weakly equivalent to the regular representation $\lambda_{\Gamma}$?
\end{question}
 
For closed oriented surfaces $\Sigma$ of genus at least $2$, spherical Plateau solutions are non-unique. Indeed each hyperbolic metric on $\Sigma$, after being rescaled by $\frac{1}{8}$, is intrinsically isomorphic to a spherical Plateau solution. Conjecturally, this is essentially the only source of non-uniqueness. 
While the barycenter map is not useful for surfaces, the calibration constructed in \cite[Section 6]{BCG95} hints at a general classification. Let $\Sigma$ be a closed oriented surface of genus at least $2$, with fundamental group $\Gamma$.
 \begin{question}[Classification for surfaces]\label{question:unique2}
Can one classify the spherical Plateau solutions for $\Sigma$?
Does extrinsic area rigidity hold for representations $\rho$ weakly equivalent to the regular representation $\lambda_{\Gamma}$?
\end{question}

  \section{Spherical Plateau solutions for 3-dimensional manifolds} \label{section:uniqueness2}

\subsection{The spherical volume of $3$-manifolds}

By the Geometrization theorem \cite{KL08,MT14}, any closed oriented $3$-manifold $M$ is decomposable into a connected sum of irreducible $3$-manifolds $Y_1,...,Y_m$, each of which is divided into a hyperbolic part and a graph manifold (each part could be empty). The disjoint union of the hyperbolic pieces is called the hyperbolic part of $M$ and denoted by $M_\mathrm{hyp}$. Its complete finite volume hyperbolic metric is called $g_{\mathrm{hyp}}$. The hyperbolic part $(M_\mathrm{hyp}, g_{\mathrm{hyp}})$ is up to isometry canonically determined by $M$. We start with  the analogue of Theorem \ref{bcg}:

\begin{theo} \label{dim3} 
Let $M$ be a closed oriented $3$-manifold and let $(M_{\mathrm{hyp}},g_{\mathrm{hyp}})$ be its hyperbolic part with its hyperbolic metric.
Then 
$$\spherevol(M) = \Vol(M_{\mathrm{hyp}}, \frac{1}{3}g_{\mathrm{hyp}}).$$
\end{theo}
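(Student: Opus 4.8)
The plan is to establish the equality $\spherevol(h_M) = \Vol(M_\mathrm{hyp}, \tfrac13 g_\mathrm{hyp})$ by proving two inequalities, following the template of Theorem \ref{theorem:bcg} but now relying on the Geometrization decomposition and on the works of Souto and Pieroni on minimal volume entropy in dimension $3$. For the lower bound, I would first reduce to the case of an irreducible manifold using the distance non-increasing map associated to a group homomorphism. Indeed, if $M = Y_1 \sharp \cdots \sharp Y_m$, then $\pi_1(M)$ is the free product $\pi_1(Y_1) * \cdots * \pi_1(Y_m)$ and $h_M$ maps to the sum of the $h_{Y_j}$ under the natural maps; using inequality (\ref{homomorphism}) together with a superadditivity argument (each $\pi_1(Y_j)$ is a retract of the free product, so one gets $\spherevol(h_M) \geq \sum_j \spherevol(h_{Y_j})$ — this should be a separate lemma, or follow from a Mayer--Vietoris / connected sum argument for spherical volume), it suffices to treat each irreducible piece. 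For an irreducible $Y$, by Geometrization $Y$ is glued from hyperbolic pieces and Seifert/graph-manifold pieces along incompressible tori. The contribution of the graph-manifold part must vanish: graph manifolds have amenable (virtually solvable, hence amenable) fundamental groups on each Seifert piece, and the vanishing results (Theorem \ref{vanishing} and the amenability heuristics from \cite{BCG91,Gromov82}) should force the spherical volume of those pieces to be zero. This is essentially the content of Souto's Theorem 5 in \cite{Souto01} and Pieroni's extension \cite{Pieroni19}, transcribed into the spherical-volume language via the identity $\spherevol_{BCG} = \spherevol$ from Remark \ref{bcg=} and the inequality (\ref{bcgse}) relating spherical volume and volume entropy.

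Concretely, for the lower bound $\spherevol(h_M) \geq \Vol(M_\mathrm{hyp}, \tfrac13 g_\mathrm{hyp})$, I would run the barycenter map argument of Besson--Courtois--Gallot adapted as in the proof of Theorem \ref{theorem:bcg}, but with a twist: instead of mapping to a closed hyperbolic manifold, one uses that the hyperbolic part $(M_\mathrm{hyp}, g_\mathrm{hyp})$ is a complete finite-volume hyperbolic $3$-manifold, so $n = 3$ and the normalization giving curvature $-1$ yields $h(g_\mathrm{hyp}) = n-1 = 2$, hence $\tfrac{h(g_\mathrm{hyp})^2}{4n} = \tfrac{4}{12} = \tfrac13$. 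One takes a degree-one map $M \to M_\mathrm{hyp}$ collapsing the non-hyperbolic pieces (this exists by the JSJ/Geometrization structure), pulls back a spherical cycle, composes with the barycenter map $\mathrm{bar} \circ \mathcal{A}$ into $(M_\mathrm{hyp}, \tfrac13 g_\mathrm{hyp})$, and invokes the Jacobian bound of Lemma \ref{astuce} (here one must use the finite-volume noncompact version of the barycenter estimate, which is available in \cite{BCG95} and \cite{BCG96} for cusped hyperbolic manifolds) together with the area formula to conclude $\Vol(K, \psi^*\mathbf{g}_\mathrm{Hil}) \geq \Vol(M_\mathrm{hyp}, \tfrac13 g_\mathrm{hyp})$. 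The subtlety is that the target is noncompact, so one needs the degree-one map to be proper in an appropriate sense and the Jacobian bound to hold uniformly away from a negligible set; this is exactly what Souto and Pieroni arrange.

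For the upper bound $\spherevol(h_M) \leq \Vol(M_\mathrm{hyp}, \tfrac13 g_\mathrm{hyp})$, I would construct explicit competitors. Put any Riemannian metric $g_\delta$ on $M$ which on the hyperbolic part agrees with $g_\mathrm{hyp}$ outside small cusp neighborhoods and which collapses the graph-manifold pieces and the cusps to have arbitrarily small volume entropy contribution (this is the standard "pinch the tori" construction used to compute minimal volume entropy in dimension $3$; the graph-manifold and cusp regions carry $F$-structures or are Seifert-fibered, so they admit collapsing sequences with $h(g_\delta)^3 \Vol(\text{collapsing part}) \to 0$). Then by the Besson--Courtois--Gallot inequality (\ref{bcgse}) together with (\ref{bcgse2}), $\spherevol(h_M) \leq \tfrac{h(g_\delta)^3}{8 \cdot 3^{3/2}} \Vol(M, g_\delta)$, and as $\delta \to 0$ the right-hand side converges to $\tfrac{2^3}{8\cdot 3^{3/2}} \Vol(M_\mathrm{hyp}, g_\mathrm{hyp}) = \tfrac{1}{3^{3/2}}\Vol(M_\mathrm{hyp}, g_\mathrm{hyp}) = \Vol(M_\mathrm{hyp}, \tfrac13 g_\mathrm{hyp})$, where the last equality is the scaling $\Vol(M_\mathrm{hyp}, \lambda g_\mathrm{hyp}) = \lambda^{3/2}\Vol(M_\mathrm{hyp}, g_\mathrm{hyp})$ with $\lambda = \tfrac13$. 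The main obstacle, and the place where the bulk of the work lies, is the lower bound: one must carefully handle the noncompact cusped target in the barycenter/calibration argument and verify that the non-hyperbolic pieces genuinely contribute nothing — this requires combining the amenable vanishing phenomenon with the degree-one map technology in the style of \cite{Souto01,Pieroni19}, and checking that all the estimates survive the passage from smooth metrics to general spherical cycles (finite simplicial complexes mapped into $\mathcal{S}^*(\Gamma)/\Gamma$) as in the proof of Theorem \ref{uniqueness1}.
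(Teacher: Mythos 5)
Your upper-bound argument is essentially the paper's: Pieroni's metrics $\mathbf{g}_{\hat\delta}$ on $M$ with $\Vol(M,\mathbf{g}_{\hat\delta})\to\Vol(M_\mathrm{hyp},g_\mathrm{hyp})$ and $h(\mathbf{g}_{\hat\delta})\to 2$, fed into (\ref{bcgse})--(\ref{bcgse2}), give exactly $\Vol(M_\mathrm{hyp},\tfrac13 g_\mathrm{hyp})$; your normalization and scaling computations are correct.

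Your lower-bound strategy, however, has a genuine gap at the very first step. You propose to reduce to irreducible pieces by the superadditivity $\spherevol(h_M)\geq \sum_j\spherevol(h_{Y_j})$ under connected sum. Inequality (\ref{homomorphism}) applied to the retraction $\pi_1(M)\to\pi_1(Y_j)$ only yields $\spherevol(h_M)\geq\spherevol(h_{Y_j})$ for each $j$, i.e.\ $\geq\max_j$, not $\geq\sum_j$. No argument in the paper gives the sum; in fact the superadditivity of spherical volume under connected sum is explicitly posed as an open question in Subsection \ref{Plateau topology}. The paper sidesteps this entirely: rather than splitting $M$ into pieces, it targets a \emph{single} barycenter map into the bouquet space $(X,\mathbf{d}_{\hat\delta})$ obtained by wedging the $Y_1,\dots,Y_k$ (the summands with nonempty hyperbolic JSJ part) at one point and collapsing the remaining summands. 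Here $\Gamma=\pi_1(X)$, there is one homomorphism $\theta:\Gamma_M\to\Gamma$, one distance-nonincreasing map $\Theta$, and one barycenter map whose Jacobian bound (Lemma \ref{astuce2}, set up for bouquets with hyperbolic pieces and singular base point) simultaneously extracts the volume of \emph{all} the regions $\mathcal R_j$. The sum $\sum_j\Vol(H_j,g_{j,\mathrm{hyp}})$ comes out of the area formula applied once, not from combining separate inequalities.

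A second, more technical issue: you suggest running the barycenter map directly into the noncompact finite-volume target $(M_\mathrm{hyp},g_\mathrm{hyp})$ via a proper degree-one collapse. But the barycenter construction in Appendix B requires a proper $\Gamma$-space with a reference measure satisfying (\ref{choice of mu}) and a Jacobian estimate available on a region of almost full volume (Lemma \ref{astuce2}, Lemma \ref{astuce4}). For a cusped hyperbolic manifold the natural lattice is not cocompact and the hyperbolic metric does not extend over $M$; Souto and Pieroni resolve this by building closed nonpositively-curved approximating metrics $g_{j,\hat\delta}$ that are genuinely hyperbolic on a large region $\mathcal R_j$ with $\Vol(\mathcal R_j,g_{j,\hat\delta})\geq\Vol(H_j,g_{j,\mathrm{hyp}})-\hat\delta$, and the paper composes this with the barycenter-to-bouquet construction. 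Without that closed model, the estimate does not have a clean formulation. Your instinct that the graph-manifold and cusp regions ``contribute nothing by amenability'' is morally right, but it is encoded in the collapsing part of Pieroni's metrics, not in the vanishing theorem for amenable covers (Theorem \ref{vanishing}), which you cannot apply piecewise here.
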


\begin{proof}[Outline of proof]
The proof results from a combination of ideas due to Besson-Courtois-Gallot, Souto and Pieroni \cite{BCG91,BCG95,Souto01, Pieroni19}. Let $M$ be a closed oriented $3$-manifold. By the Geometrization theorem, it is known that $M$ is the connected sum of irreducible manifolds $Y_1,...,Y_m$ such that each $Y_j$ is decomposed, after cutting along a disjoint collection of essential tori, into  hyperbolic pieces and  Seifert pieces (such pieces could be empty). Let $Y_1,...,Y_k$ ($k\in \{1,...,m\}$) be the summands with a non-empty union of hyperbolic JSJ components called $H_j\subset Y_j$. The manifold $H_j$ is not necessarily connected and if non-empty, $Y_j\setminus H_j$ is a graph manifold with boundary.  Let $g_{j,\mathrm{hyp}}$ be the complete finite volume metric on $H_j$. The disjoint union of $H_j$ is the hyperbolic part of $M$ and is denoted by $M_\mathrm{hyp}$.

In \cite[Theorem 3.1, Theorem 3.13]{Pieroni19}, for any small $\hat{\delta}>0$, a nonpositively curved metric $g_{j,\hat{\delta}}$ approximating the hyperbolic metric $g_{\mathrm{hyp}}$ is constructed on $Y_j$ for $j\in \{1,...,k\}$:  there is in particular a region $\mathcal{R}_j \subset Y_j$ such that $g_{j,\hat{\delta}}$ is a hyperbolic metric on the $\frac{1}{\hat{\delta}}$-neighborhood of $\mathcal{R}_j$ in $Y_j$ with respect to $g_{j,\hat{\delta}}$, and 
\begin{equation} \label{hjg}
\Vol(H_j,g_{j,\mathrm{hyp}}) - \Vol(\mathcal{R}_j, g_{j,\hat{\delta}}) \leq \hat{\delta}.
\end{equation}
Accordingly, a metric space $(X,\mathbf{d}_{\hat{\delta}})$ obtained by attaching $Y_1,...,Y_k$ at a common point $\mathbf{p}$ and collapsing the other summands $Y_{k+1},...,Y_m $ to $\mathbf{p}$ is defined in \cite[Section 6]{Pieroni19}. By a slight abuse of notations, we consider $Y_1,...,Y_k$ as subsets of $X$. Set
$$\Gamma:=\pi_1(X).$$
Let $\tilde{X}$ be the universal cover, endowed with the induced distance denoted by $\tilde{\mathbf{d}}_{\hat{\delta}}$ and let $o\in \tilde{X}$ be a reference point projecting to $\mathbf{p}\in X$. Except at countably many points of $\tilde{X}$ corresponding to the lifts of $\mathbf{p}$, $\tilde{\mathbf{d}}_{\hat{\delta}}$ is in fact given by the path metric of a smooth Riemannian metric, and moreover $(\tilde{X},\tilde{\mathbf{d}}_{\hat{\delta}})$ is a $\mathrm{CAT}(0)$ space.

 There is a variant of the barycenter map 
corresponding to $\Gamma$, $(\tilde{X}, \tilde{\mathbf{d}}_{\hat{\delta}})$ with properties completely similar to those stated in Section \ref{appendix b}:
$$\mathrm{Bar} : \mathbb{S}^+/\lambda_\Gamma(\Gamma) \to ({X},{\mathbf{d}}_{\hat{\delta}}).$$
In particular this map enjoys a  Jacobian bound of the type 
$$|\Jac \mathrm{Bar}| \leq 1 +\text{error}(\hat{\delta})$$ 
with respect to the metric $\frac{1}{3} \mathbf{d}_{\hat{\delta}}$, along any totally geodesic $3$-simplex $S$ in $\mathbb{S}^+/\lambda_\Gamma(\Gamma)$, and at any
point in $S$ which is sent inside $\bigcup_j \mathcal{R}_j \setminus \{\mathbf{p}\}$.
The error converges to $0$ as $\hat{\delta}$ goes to $0$.

Let $\Gamma_M:=\pi_1(M)$ and $h_M\in H_3(\Gamma_M;\mathbb{Z})$ the induced class. The natural homomorphism $\theta:\Gamma_M\to \Gamma$ induces a $1$-Lipschitz map 
$$\Theta :  {S^\infty}/\lambda_{\Gamma_M}(\Gamma_M) \to  {S^\infty}/\lambda_\Gamma(\Gamma),$$
see Subsection \ref{some distance non increasing maps}.

Let $C\in \mathscr{C}(h_M)$. As before, by Lemma \ref{approx polyh}, one can assume that $C$ is a polyhedral chain and each point of its support lifts to an $\ell^2$ function with finite support in $\Gamma_M$. Then $\Theta(\spt(C)) \subset \mathbb{S}^+/\lambda_\Gamma(\Gamma)$. An arbitrarily small perturbation of  $\Theta$, still denoted by $\Theta$ for simplicity, makes $\Theta_\sharp(C)$ a polyhedral chain in $\mathbb{S}^+/\lambda_\Gamma(\Gamma)$.
Set
$$\tilde{\mathrm{Bar}} := \mathrm{Bar}\circ \Theta.$$
By $\Gamma$-equivariance, 
$\tilde{\mathrm{Bar}}_\sharp(C) = \llbracket 1_M\rrbracket$. Note that $\tilde{\mathrm{Bar}}$ depends on $C$ because of the perturbation of $\Theta$ but this dependence will not play a role later.


As in the proof of Theorem \ref{bcg}, the Jacobian bound for the barycenter map  implies by the area formula of Subsection \ref{subsection:area} that for all $\epsilon>0$, whenever $\hat{\delta}$ is small enough,
\begin{align*}
\mathbf{M}(C) &\geq (\frac{1}{3})^{3/2} \sum_{j=1}^k \Vol(\mathcal{R}_j,g_{j,\hat{\delta}}) -\epsilon \\
& \geq  (\frac{1}{3})^{3/2}  \sum_{j=1}^k \Vol(H_j,g_{j,\mathrm{hyp}}) - 2\epsilon.
\end{align*}
Since $C\in \mathscr{C}(h_M)$ was arbitrary, by sending $\hat{\delta} \to 0$ we get 
$$\spherevol(M) \geq \sum_{j=1}^k \Vol(H_j,\frac{1}{3}g_{j,\mathrm{hyp}})) = \Vol(M_{\mathrm{hyp}}, \frac{1}{3}g_{\mathrm{hyp}}).$$

As for the reverse inequality
$$\spherevol(M) \leq  \Vol(M_{\mathrm{hyp}}, \frac{1}{3}g_{\mathrm{hyp}}),$$
 in view of \cite[Corollary 3.13]{BCG91} (see (\ref{bcgse}) in the proof of Theorem \ref{theorem:bcg}), one just needs to exhibit a metric on $M$ with volume entropy close to $2$ and volume close to that of the hyperbolic part. 
In \cite[Theorem 4.1]{Pieroni19}\footnote{Theorem 4.3 in \cite{Pieroni19} is not quite correct as stated,
but this is not a serious issue. The statement can be replaced by the following: for any $\epsilon>0$ and any two closed Riemannian manifolds $(Y',g')$, $(Y'',g'')$, there is a metric $g$ on the connected sum such that the volume entropies satisfy $h(g) \leq h(g') + h(g'') +\epsilon$ and $|\Vol(Y,g) - \Vol(Y',g')-\Vol(Y'',g'')| \leq \epsilon$.} the author constructs a metric $\mathbf{g}_{\hat{\delta}}$ on $M$ depending on $\hat{\delta}$, 
with the property that, as $\hat{\delta}\to 0$, 
\begin{itemize}
\item $\Vol(M,\mathbf{g}_{\hat{\delta}})$ converges to $\sum_{j=1}^k\Vol(H_j,g_{j,\mathrm{hyp}}) = \Vol(M_{\mathrm{hyp}}, g_{\mathrm{hyp}}) ,$
\item the volume entropy $h(\mathbf{g}_{\hat{\delta}})$ converges to $2$.
\end{itemize}
This finishes the outline.

\end{proof}

\subsection{Intrinsic uniqueness of spherical Plateau solutions}

In the main theorem of this section, we interpret the hyperbolic part of a closed oriented $3$-manifold as its unique spherical Plateau solution, up to intrinsic isomorphism. 

\begin{theo} \label{uniqueness2}
 Let $M$ be a closed oriented $3$-manifold, whose hyperbolic part is denoted by $(M_{\mathrm{hyp}}, g_{\mathrm{hyp}})$. Then any spherical Plateau solution $C_\infty$ for $M$ is intrinsically isomorphic to $(M_{\mathrm{hyp}},\frac{1}{{3}}g_{\mathrm{hyp}})$. 
\end{theo}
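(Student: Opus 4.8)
The plan is to mirror the proof of Theorem \ref{uniqueness1}, replacing the locally symmetric model $(M,g')$ by the (possibly disconnected, finite-volume, noncompact) hyperbolic part $(M_{\mathrm{hyp}},\frac13 g_{\mathrm{hyp}})$, and carrying along the extra approximation parameter $\hat\delta$ from Pieroni's construction that appears in the proof of Theorem \ref{dim3}. First I would fix a minimizing sequence $\{C_i\}\subset\mathscr{C}(h_M)$ with $\mathbf{M}(C_i)\to\spherevol(h_M)=\Vol(M_{\mathrm{hyp}},\frac13 g_{\mathrm{hyp}})$ converging in the intrinsic flat topology to $C_\infty=(X_\infty,d_\infty,S_\infty)$, and by Remark \ref{what simi} assume each $C_i$ is a polyhedral chain $\psi_\#(\llbracket 1_K\rrbracket)$ with $\Theta\circ\psi(K)\subset\mathbb{S}_{\mathrm{finite}}/\Gamma_M$. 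For each small $\hat\delta>0$ one has the barycenter composition $\tilde{\mathrm{bar}}_{\hat\delta}:\spt(C_i)\to(X,\mathbf{d}_{\hat\delta})$ of degree one onto Pieroni's space $X$, and the Jacobian bound of Lemma \ref{astuce2} says that on the region mapped into $\bigcup_j\mathcal{R}_j\setminus\{\mathbf{p}\}$ the $n$-Jacobian is $\le 3^{3/2}+\eta$ for $\hat\delta$ small; since the masses converge to $(\frac13)^{3/2}\sum_j\Vol(H_j,g_{j,\mathrm{hyp}})$ up to errors controlled by \eqref{hjg}, the Jacobian must be close to its maximum on a set $\Omega_{i,\hat\delta}\subset\spt(C_i)$ exhausting almost all of $\spt(C_i)$, and $\tilde{\mathrm{bar}}_{\hat\delta}$ is injective there, exactly as in \eqref{omegaici}.

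Next I would run Steps 1--3 of the proof of Theorem \ref{uniqueness1} verbatim but with two nested exhaustions instead of one: the neighborhoods $N_{d_k}$ of the basepoint $\mathbf{p}$ as before, AND, since $M_{\mathrm{hyp}}$ is noncompact with cusps, a compact exhaustion of each hyperbolic piece $H_j$ by the sets $\mathcal{R}_j$ and their sublevels; working on $\mathcal{R}_j\setminus N_{d_k}$, which is compact and genuinely hyperbolic for $\hat\delta$ small, one gets the local Lipschitz bound from inequality \eqref{length<} of Lemma \ref{astuce}, slices out integral currents $D_{k,i}^{(\hat\delta)}$, passes to intrinsic flat limits $D_k^{(\hat\delta)}$, builds the limit map $\tilde{\mathrm{bar}}_{k,\infty}$ into $\ell^\infty$ via injectivity of $\ell^\infty$ and a diagonal/compactness argument, checks it preserves the current structure (pushing $D_k^{(\hat\delta)}$ to $\llbracket 1_{\mathcal{R}_j\setminus N_{d_k}}\rrbracket$) and is locally $1$-Lipschitz, and finally proves the $\eta$-chain curve-lifting property using the slicing theorem, the coarea formula, the constancy theorem and the fiberwise-injectivity in \eqref{omegaici}, concluding that $\tilde{\mathrm{bar}}_{k,\infty}$ is a bijective isometry for the intrinsic metrics onto $(\mathcal{R}_j\setminus N_{d_k},\frac13 g_{j,\mathrm{hyp}})$ (using that $g_{j,\hat\delta}$ coincides with the hyperbolic metric on a large neighborhood of $\mathcal{R}_j$, so the constant $3^{3/2}$ is genuinely optimal there).

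The gluing step then needs to be done carefully in two directions. First, for fixed $\hat\delta$, glue over $k$ and over the pieces $\mathcal{R}_j$ to realize a subset of $(X_\infty,L_{d_\infty})$ of full mass as isometric to $\bigsqcup_j(\mathcal{R}_j,\frac13 g_{j,\hat\delta})$, with the current structure matching $\bigsqcup_j\llbracket 1_{\mathcal{R}_j}\rrbracket$; by \eqref{hjg} the volume error is $O(\hat\delta)$. Second, a diagonal argument letting $\hat\delta\to 0$ together with the intrinsic flat compactness of Plateau solutions (\cite[Theorem 4.19]{SW11}) and the fact that $\Vol(\mathcal{R}_j,g_{j,\hat\delta})\to\Vol(H_j,g_{j,\mathrm{hyp}})$ forces $C_\infty$ to be intrinsically equivalent to $(M_{\mathrm{hyp}},\frac13 g_{\mathrm{hyp}})$. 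Here one uses $\Vol(M_{\mathrm{hyp}},\frac13 g_{\mathrm{hyp}})$-mass saturation of $S_\infty$ together with \cite[Lemma 2.42]{SW11} to rule out any extra pieces, and the same argument as at the end of Step 4 of Theorem \ref{uniqueness1} to extend the isometry across the removed cusps and basepoint without creating new volume.

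The main obstacle I expect is handling the noncompactness of $M_{\mathrm{hyp}}$ simultaneously with the double limit in $i$ and in $\hat\delta$: unlike the closed locally symmetric case, here the model metric itself is only approximated (by $g_{j,\hat\delta}$), the domain is exhausted by the compact but $\hat\delta$-dependent sets $\mathcal{R}_j$, and one must ensure that the pieces of $X_\infty$ produced at different values of $\hat\delta$ are compatible and that no mass escapes into the cusps as $\hat\delta\to 0$. Making the diagonal extraction and the nested exhaustions interact cleanly — so that the limit isometry is globally defined on all of $M_{\mathrm{hyp}}$ and not just on an increasing family of compact cores — is the delicate bookkeeping; the geometric input (optimal Jacobian $3^{3/2}$, its stability, and injectivity of the barycenter map on the good set) is already supplied by Lemmas \ref{astuce}, \ref{astuce2} and the structure of Pieroni's metrics. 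A secondary subtlety is that $M_{\mathrm{hyp}}$ may be disconnected, so the constancy theorem argument in the $\eta$-chain property must be applied componentwise and one must check that $\tilde{\mathrm{bar}}_{\hat\delta}$ does not mix mass between different hyperbolic pieces, which follows from degree-one-ness and the disjointness of the $\mathcal{R}_j$.
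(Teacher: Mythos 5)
Your proposal correctly mirrors the overall strategy of the paper's proof: couple Pieroni's approximation parameter $\hat\delta$ to the index $i$ of the minimizing sequence, replace $N_d$ by a neighborhood that also excises the thin part near the cusps, and rerun the three steps of Theorem \ref{uniqueness1} to get a region $Z\subset X_\infty$ whose intrinsic metric agrees with $(Y\setminus\{\mathbf{p}\},\frac13 L_{\mathrm{hyp}})$, where $Y$ is the bouquet of the $H_j$'s attached at $\mathbf{p}$. (The paper actually makes the coupling explicit by writing $\hat\delta_i\to 0$ and using a modified thin set $N^i_d$ defined via $\dist(\cdot,\mathbf{p})$ and $\injrad$, but this is close enough to your ``two nested exhaustions''.)

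The genuine gap is at the very end. Having built a bijective local isometry $\tilde{\mathrm{bar}}_\infty: Z\to Y\setminus\{\mathbf{p}\}$, you still need to determine whether the $L_{d_\infty}$-completion of $Z$ inside $X_\infty$ is $\bigsqcup_j(H_j,\frac13 g_{j,\mathrm{hyp}})$ or the pinched bouquet $(Y,\frac13\mathbf{d}_{\mathrm{hyp}})$ where the basepoints $y_1,\dots,y_k$ are identified. Unlike Theorem \ref{uniqueness1}, this is not automatic: both candidates have the same volume and the same local structure away from $\mathbf{p}$, so ``no extra volume is created'' does not decide the question, and neither does applying the constancy theorem componentwise. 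Your invocation of ``the same argument as at the end of Step 4 of Theorem \ref{uniqueness1}'' does not address this, because there $M\setminus\{\mathbf{p}\}$ has a unique completion. The paper resolves it with a specific trick that your proposal misses: it reruns the whole construction with a second choice of basepoints $\{y_1',\dots,y_k'\}\subset\bigsqcup_j H_j$, obtaining a second isometry $\tilde{\mathrm{bar}}_\infty'$ onto $Y'\setminus\{\mathbf{p}'\}$; comparing the two produces a Riemannian isometry $\overline\Phi$ of $\bigsqcup_j(H_j,\frac13 g_{j,\mathrm{hyp}})$, and because the isometry group of a finite union of finite-volume hyperbolic $3$-manifolds is finite one can choose $\{y_j'\}$ so that $\overline\Phi(\{y_j\})\cap\{y_j'\}=\varnothing$. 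This forces the points $\xi(y_{j_1})$, $\xi(y_{j_2})$ (the boundary points of $Z$ that would have to coincide if $X_\infty$ were pinched) to sit at strictly positive $d_\infty$-distance, whence the completion is the disjoint union and not the bouquet. Without some argument of this kind, your proof does not rule out a Plateau solution whose support is the bouquet $Y$ rather than $M_{\mathrm{hyp}}$.
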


\begin{proof}[Outline of proof]
The proof is similar to that of Theorem \ref{uniqueness1}.
Let us give a rough idea of how it works.

Let $\{C_i \}\subset \mathscr{C}(h_M)$ be a sequence satisfying
\begin{equation}\label{limminimizing}
\lim_{i\to \infty} \mathbf{M}({C}_i) = \spherevol(M) = \Vol(M_\mathrm{hyp},\frac{1}{3}g_\mathrm{hyp}),
\end{equation}
where the second equality follows from Theorem \ref{dim3}.
Suppose that ${C}_i$ converges in the intrinsic flat topology to a spherical Plateau solution $C_\infty=(X_\infty,d_\infty,S_\infty)$. 

Let $(Y,\mathbf{d}_{\mathrm{hyp}})$ be the space obtained by taking $\bigsqcup_{j=1}^k (H_j,g_{j,{\mathrm{hyp}}})$ and identifying $k$ points $\{y_1,...,y_k\}$ which belong respectively to $H_1,...,H_k$. Denote by $\mathbf{p}$ this unique singular point of $Y$ (in general $Y$ is not path connected).

We will use the notations of the proof of Theorem \ref{dim3}. 
Take a sequence $\{\hat{\delta}_i\}$ converging to $0$ as $i\to \infty$. 
Consider the metric space $(X,\mathbf{d}_{\hat{\delta}_i})$ and its fundamental group  $\Gamma:=\pi_1(X).$
Recall that $(X,\mathbf{d}_{\hat{\delta}_i})$ is obtained by attaching summands $(Y_1,...,Y_k)$ at one point. By abuse of notations, we call that unique singular point of $(X,\mathbf{d}_{\hat{\delta}_i})$  by $\mathbf{p}$ too. The pointed  sequence $(X,\mathbf{d}_{\hat{\delta}_i},\mathbf{p})$ visibly ``converges'' to $(Y,\mathbf{d}_{\mathrm{hyp}},\mathbf{p})$ as $i\to \infty$.

Consider the  barycenter maps
$$\tilde{\mathrm{Bar}} :=\mathrm{Bar} \circ \Theta : \spt(C_i) \to (X,\mathbf{d}_{\hat{\delta}_i}),$$
see notations in the proof of Theorem \ref{dim3}.
Modulo some technical details, by following the proof of Theorem \ref{uniqueness1} step by step applied to $ \tilde{\mathrm{Bar}}$, we arrive at the following partial conclusion. Recall that 
$C_\infty=(X_\infty,d_\infty,S_\infty)$ is the spherical Plateau solution, limit of $C_i$.
There is a region $Z\subset X_\infty$ which is locally path connected, and if $L_{d_\infty}$  (resp. $L_{\mathrm{hyp}}$) is the path metric induced by $d_\infty$ (resp. $\mathbf{d}_{\mathrm{hyp}}$), 
$$(Z,L_{d_\infty}) \text{ is isometric to } (Y \setminus \{\mathbf{p}\}, \frac{1}{3}L_{\mathrm{hyp}})$$ 
via a map 
$$\tilde{\mathrm{Bar}}_{\infty}: Z\to Y \setminus \{\mathbf{p}\}$$
which is a ``limit'' of the barycenter maps $\mathrm{Bar}_i$. 
Moreover, 
$$(\tilde{\mathrm{Bar}}_{\infty})_\sharp S_\infty \llcorner Z = \llbracket 1_Y \rrbracket$$ where $\llbracket 1_Y \rrbracket$ is the natural current induced by the oriented finite volume Riemannian space $Y$.
We also have $\mathbf{M}(S_\infty\llcorner Z)=\mathbf{M}(C_\infty)$.

The final step is to show that $(X_\infty,L_{d_\infty})$ is in fact isometric to the $\frac{1}{3}L_{\mathrm{hyp}}$-completion of $(Y \setminus \{\mathbf{p}\},\frac{1}{3}L_{\mathrm{hyp}})$, which is isometric to the disjoint union
$$
\bigsqcup_{j=1}^k (H_j, \frac{1}{3}g_{j,\mathrm{hyp}}) = (M_\mathrm{hyp}, \frac{1}{3}g_\mathrm{hyp}).
$$
Essentially, what we want to rule out is that $(X_\infty,d_\infty)$ is isometric to a non-smooth space made of manifolds attached at one point, for instance 
$(Y,\frac{1}{3}L_{\mathbf{d}_{\mathrm{hyp}}})$.
Actually, the apparent issue with the point $\mathbf{p}$ is only an artefact of our definition of the barycenter map, and is not related to the geometry of spherical Plateau solutions. By playing with the important property that the set of Riemannian isometries of a finite union of finite volume hyperbolic $3$-manifolds is finite, and by choosing the attachment point $\mathbf{p}$ generically enough, we conclude that the only possibility is the desired statement: $(X_\infty,L_{d_\infty})$ is isometric as a length space to $(M_\mathrm{hyp}, \frac{1}{3}g_\mathrm{hyp})$ and the proof is completed.

\end{proof}

More details for the proof of Theorem \ref{uniqueness2} appeared in the earlier preprint \cite{Antoine22}.

By Theorems \ref{dim3} and \ref{uniqueness2}, the spherical volume behaves nicely under geometric decompositions of $3$-manifolds. To what extent this holds for general group homology classes is an interesting problem (see \cite{Gromov82} for such properties in the context of the simplicial volume):
\begin{question}[Additivity under connected sum]
Given two closed oriented manifolds $M_1$, $M_2$ of same dimension at least 3, if $M_1\sharp M_2$ denotes the connected sum, do we have
$$\spherevol(M_1\sharp M_2) = \spherevol(M_1)+\spherevol(M_1)?$$
Does any spherical Plateau solution for $M_1\sharp M_2$ decompose into the union of spherical Plateau solutions for $M_1$ and $M_2$?
\end{question}

Another intriguing problem suggested by Theorems \ref{dim3} and \ref{uniqueness2} is the following:
\begin{question} [RF and MCF]
What is the interplay between the spherical Plateau problem and the Ricci flow or the mean curvature flow?
\end{question}

  \section{Plateau Dehn fillings} 
  \label{section:uniqueness3}

\subsection{Preliminaries on $\mathrm{CAT}(0)$ Dehn fillings}  \label{fm prelim}
In \cite{FM10} Fujiwara and Manning constructed certain pseudomanifolds out of higher dimensional finite volume hyperbolic manifolds, which generalize $3$-dimensional Dehn fillings from a topological and group theoretic point of view. In dimensions larger than $3$, the spherical Plateau solutions associated to those pseudomanifolds play the role of hyperbolic Dehn fillings in dimension $3$. In this section we are interested in the asymptotic behavior of those ``Plateau Dehn fillings''.

First let us review some of the definitions and results in \cite{FM10}. Let $n\geq 3$. Consider a finite volume oriented hyperbolic $n$-manifold $(M,g_{\mathrm{hyp}})$ with disjoint toral cusps $E_1,...,E_m$ and no other ends. Here a toral cusp means an end of $M$ homeomorphic to the product of a torus with $(-\infty,0]$, such that the induced Riemannian metric on $\partial E$ is flat. Write $\bar{M}:= M\setminus \bigcup_{j=1}^m \mathring{E}_j$. Inside each component $\partial E_j$ of
$\partial \bar{M}$, choose an embedded totally geodesic torus $T_j$ of dimension $k_i$ where $k_j\in \{1,...,n-1\}$. $T_j$ is a leaf of a fibration $\partial E_j\to B_j$ with base an $(n-1-k_j)$-torus $B_j$ and leaves $k_j$-tori (note that in \cite{FM10}, contrarily to here, the ambient dimension is by convention $n+1$). One can form the topological space $M(T_1,...,T_m)$ by collapsing each leaf of these fibrations $\partial E_j\to B_j$ to points (see \cite[Definition 2.5]{FM10}). This space is a pseudomanifold which is smooth outside of the so-called filling cores $V_1,...V_m$ where $V_j$ is an $(n-1-k_j)$-torus.  $M(T_1,...,T_m)$ is not a manifold except when for all $j\in \{1,...,m\}$, $k_j=1$.  There is a natural map 
$$M\to M(T_1,...,T_m)$$
which induces a natural surjection 
\begin{equation}\label{surject fund}
\pi_1(M)\to \pi_1(M(T_1,...,T_m)).
\end{equation}

In dimension $3$ when each $k_j=1$, by classical works \cite{Thurston97,BH96} it is known that when each circle $T_j$ has length larger than $2\pi$, $M(T_1,...,T_m)$ is a hyperbolic manifold: this is the so-called $2\pi$ theorem.
The main theorem in \cite{FM10} generalizes the $2\pi$ theorem to higher dimensions.  It asserts that $M(T_1,...,T_m)$ admits a locally $\mathrm{CAT}(0)$ path metric $\mathbf{d}$ whenever none of the tori $T_j$ admit a closed geodesic of length at most $2\pi$, or equivalently when the injectivity radius of each $T_j$ with its intrinsic metric is strictly larger than $\pi$. Fujiwara and Manning conjectured in \cite[Conjecture 1.8, Question 1.9]{FM11} that as $\injrad(T_j)\to \infty$, the simplicial volume of $M(T_1,...,T_m)$ should converge to the simplicial volume of $M$ and approach that limit from below.

A useful property of the locally $\mathrm{CAT}(0)$ metric $\mathbf{d}$ constructed in \cite{FM10} is that it can be chosen to approximate the hyperbolic metric on $M$ on large sets when the injectivity radii of the $T_j$ are tending to infinity:
\begin{lemme}\label{bahoui} 
Fix a point $p\in M$. For any $R>0$, there is $i_R>0$ such that if
 \begin{equation} \label{i_R}
 \injrad(T_j)> i_R \quad \text{ for all $j\in \{1,...,m\}$},
 \end{equation}
then $\dim T_j < n-1$ for all $j\in \{1,...,m\}$ and one can choose the metric $\mathbf{d}$ on $M(T_1,...,T_m)$ so that there is a closed geodesic ball $\mathcal{B}_R$ of radius $R$ inside $(M(T_1,...,T_m),\mathbf{d})$ isometric to the closed geodesic ball of radius $R$ centered at $p$ inside $M$.
 \end{lemme}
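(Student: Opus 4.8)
# Proof Proposal for Lemma \ref{bahoui}

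The plan is to trace through the Fujiwara--Manning construction in \cite{FM10} and verify that the locally $\mathrm{CAT}(0)$ metric $\mathbf{d}$ on $M(T_1,\dots,T_m)$ can be arranged to agree \emph{exactly} with $g_{\mathrm{hyp}}$ on a large ball around a fixed point $p$, provided the injectivity radii of the filling tori are taken large enough. First I would recall the structure of $M(T_1,\dots,T_m)$: outside a neighborhood of the filling cores $V_j$, the space is simply $M$ with its hyperbolic metric, and the modification is entirely supported in model neighborhoods of the cusps $E_j$. In the Fujiwara--Manning construction the metric $\mathbf{d}$ is hyperbolic on the ``deep'' part of each cusp up to a certain depth and is then interpolated to a $\mathrm{CAT}(0)$ metric near the collapsed core; crucially, the region of $M$ that is left untouched (where $\mathbf{d}=g_{\mathrm{hyp}}$) can be taken to be the complement of an arbitrarily small neighborhood of the cusps, at the cost of pushing the interpolation region deeper into the cusp, which is precisely what requiring $\injrad(T_j)$ large permits.

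Second, I would make the quantitative statement: fix $p\in M$ and $R>0$. Since $M$ has finite volume with toral cusps, the $R$-ball $B_M(p,R)$ is compact and, for $R$ fixed, it is contained in the complement of a fixed neighborhood $\mathcal{U}$ of the union of cusps $\bigcup_j E_j$ (more precisely, $B_M(p,R)$ meets the cusps only in their ``thick'' portion up to some bounded depth $D=D(p,R)$). The Fujiwara--Manning metric $\mathbf{d}$ is designed so that it coincides with $g_{\mathrm{hyp}}$ on the portion of each cusp of depth at most $\delta(\injrad(T_j))$, where $\delta(\cdot)\to\infty$ as $\injrad(T_j)\to\infty$. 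Hence there exists $i_R$ such that if $\injrad(T_j)>i_R$ for all $j$, then $\delta(\injrad(T_j))>D(p,R)$ for all $j$, so $\mathbf{d}$ equals $g_{\mathrm{hyp}}$ on an open neighborhood of $\overline{B_M(p,R)}$ inside $M(T_1,\dots,T_m)$, and the closed geodesic ball $\mathcal{B}_R$ of radius $R$ about $p$ (computed with $\mathbf{d}$) is then isometric to $\overline{B_M(p,R)}$. The condition $\dim T_j<n-1$ for all $j$ is automatic once $\injrad(T_j)>i_R\ge\pi$, since a top-dimensional totally geodesic torus in $\partial E_j$ is a full horospherical cross-section of bounded diameter and cannot have large injectivity radius; this is already implicit in \cite{FM10}.

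The main obstacle I anticipate is the bookkeeping needed to extract from \cite{FM10} the explicit dependence of the ``undisturbed region'' on the injectivity radius, i.e.\ proving that $\delta(\injrad(T_j))\to\infty$. This requires examining how the $\mathrm{CAT}(0)$ warping functions used in the Fujiwara--Manning fillings are constructed near each cusp: one must check that the interpolation from the hyperbolic metric to the filled metric can be performed within a collar whose depth is controlled \emph{below} by (a function of) $\injrad(T_j)$, so that a longer filling torus allows a longer hyperbolic collar. Once this monotone dependence is established, the rest is a compactness argument on $\overline{B_M(p,R)}$ together with the triangle inequality to guarantee that geodesics realizing the $\mathbf{d}$-distance from $p$ to points of $\mathcal{B}_R$ stay in the undisturbed hyperbolic region, hence that the $\mathbf{d}$-ball and the $g_{\mathrm{hyp}}$-ball genuinely coincide as metric spaces.
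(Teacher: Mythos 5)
Your overall intuition is correct: the mechanism is that large $\injrad(T_j)$ gives you room to push the metric modification deep into the cusp, so that a fixed ball $B(p,R)$ lives in a region where $\mathbf{d}=g_{\mathrm{hyp}}$. You also correctly dispose of the dimension claim: if $T_j=\partial E_j$ then $T_j$ is a fixed horotorus of bounded diameter, so $\injrad(T_j)$ is bounded, hence large injectivity radius forces $\dim T_j<n-1$.

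However, the route you take to make the mechanism rigorous is where your proposal stalls, and you admit this yourself. You propose to show, by examining the Fujiwara--Manning warping functions, that the original FM construction applied to the cusps $E_j$ already leaves the hyperbolic metric untouched up to depth $\delta(\injrad(T_j))$ inside each cusp, with $\delta\to\infty$. But the FM metric is built relative to a \emph{given} choice of compact core $\bar M = M\setminus\bigcup_j\mathring E_j$: Proposition~2.8 of \cite{FM10} gives agreement with $g_{\mathrm{hyp}}$ on $\bar M$, not any quantitative statement about agreement at controlled depth inside $E_j$ as a function of $\injrad(T_j)$. So the monotone dependence $\delta(\injrad(T_j))\to\infty$ that you flag as ``bookkeeping'' is not simply there to be read off; it is a real gap.

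The paper avoids this entirely by a small but essential trick you did not quite reach: rather than trying to control the modification depth inside the fixed cusps $E_j$, \emph{replace} the cusps. Choose deeper cusps $\tilde E_j\subset E_j$; the corresponding horospherical tori $\tilde T_j\subset\partial\tilde E_j$ have injectivity radius that shrinks (exponentially in the depth) as $\tilde E_j$ is pushed inward. If $\injrad(T_j)>i_R$ with $i_R$ large, you can push $\tilde E_j$ inward by an arbitrary prescribed depth while still keeping $\injrad(\tilde T_j)>\pi$, i.e.\ still satisfying the $2\pi$-condition. Since $M(\tilde T_1,\dots,\tilde T_m)$ is homeomorphic to $M(T_1,\dots,T_m)$, one simply applies \cite[Proposition~2.8]{FM10} to the smaller cusps $\tilde E_j$, obtaining a CAT(0) metric on the same filled space which agrees with $g_{\mathrm{hyp}}$ on the now-larger compact core $M\setminus\bigcup_j\tilde E_j$. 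Choosing the depth so that this core contains the closed $R$-ball around $p$ gives the lemma with no need to look inside the FM interpolation at all. So: same geometric idea as yours (buy depth with injectivity radius), but implemented by reparametrizing the input to the FM construction rather than by re-proving quantitative properties of its output; the latter is what your proposal leaves unfinished.
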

 \begin{proof}
Note that when $i_R$ is large enough depending also on $M$, then (\ref{i_R}) necessarily implies that $T_j\neq \partial E_j$ i.e. $\dim T_j < n-1$ for all $j\in \{1,...,m\}$.
For any $\epsilon>0$, if $i_R$ is large enough and satisfies (\ref{i_R}),
then we can replace the toral cusps $E_j$ by new toral cusps $\tilde{E}_j \subset E_j$ which are far away inside the ends of $M$, so that $\Vol(M\setminus \bigcup_{j=1}^m {\tilde{E}}_j,g_\mathrm{hyp}) \geq \Vol(M,g_\mathrm{hyp}) -\epsilon$ and yet 
$$ \injrad(\tilde{T}_j)> \pi,$$
where $\tilde{T}_j$ is the totally geodesic torus in $\partial \tilde{E}_j$ corresponding to $T_j\subset \partial E_j$. Then we apply \cite[Proposition 2.8]{FM10} to the fillings obtained with $\tilde{T}_j \subset \partial \tilde{E}_j$ (which is homeomorphic to $M(T_1,...,T_m)$).

 \end{proof}
 
Outside of the filling cores, $\mathbf{d}$ is locally induced by a Riemannian metric of strictly negative curvature \cite[Theorem 2.7]{FM10}. The $n$-dimensional Hausdorff measure on $M(T_1,...,T_m)$ coincides with the Lebesgue measure outside of the filling cores, which have $0$ Hausdorff measure. We denote by $ \tilde{M}(T_1,...,T_m)$ the universal cover of ${M}(T_1,...,T_m)$, and by $\Xi $ the singular set in $ \tilde{M}(T_1,...,T_m)$, namely the union of the lifts of the filling cores in ${M}(T_1,...,T_m)$. Each component of $\Xi$ is a totally geodesic embedded Euclidean space \cite[Subsection 4.6]{FM10}. On $ \tilde{M}(T_1,...,T_m)$, the lift of $\mathbf{d}$ is $\mathrm{CAT}(0)$ and the group 
$$\Gamma:=\pi_1(M(T_1,...,T_m))$$
acts freely properly cocompactly by isometries.

In the sequel we will always assume that $\min_{j=1}^m \injrad(T_j) >2\pi$, so that the path metric $\mathbf{d}$ constructed in \cite{FM10} exists. 
An important fact is that one can define a barycenter map associated with $(\tilde{M}(T_1,...,T_m),\mathbf{d})$:
$$\mathrm{Bar} : \mathbb{S}^+/\lambda_\Gamma(\Gamma) \to (\tilde{M}(T_1,...,T_m),\mathbf{d})$$
and it has similar properties to the one reviewed in Section \ref{appendix b} (here the definitions of $ \mathbb{S}^+$ and $\mathrm{Bar}$ need to be adapted). There is a technical difficulty caused by the fact that
the distance functions on $(\tilde{M}(T_1,...,T_m),\mathbf{d})$ are potentially non-smooth on large sets. For instance the fact that Hessian bounds still make sense in a weak sense and the fact that a cycle in $ {S^\infty}/\lambda_\Gamma(\Gamma)$ can be perturbed to a cycle in $\mathbb{S}^+/\lambda_\Gamma(\Gamma)$ require some arguments. We will not discuss those issues but we mention the following useful papers \cite{FM10,KL21}.

\subsection{Asymptotic rigidity of Plateau Dehn fillings}

For $n\geq 3$, let $(M,g_{\mathrm{hyp}})$ be non-compact oriented hyperbolic $n$-manifold of finite volume, with only toral cusps $E_1,...,E_m$. 
Let $T_i\subset \partial E_i$ be an embedded totally geodesic $k_i$-dimensional torus. We will always suppose that the injectivity radii of $T_i$ are larger than $\pi$. By residual finiteness, any finite volume hyperbolic manifold is finitely covered by such a hyperbolic manifold. Denote  by $g'$ the following rescaling of the hyperbolic metric on $M$:
$$g' :=\frac{(n-1)^2}{4n} g_{\mathrm{hyp}}.$$

Let $M(T_1,...,T_m)$ be a $2\pi$-filling constructed by Fujiwara-Manning in \cite{FM10} endowed with the metric $\mathbf{d}$ satisfying Lemma \ref{bahoui}, for some positive dimensional tori $T_i$, as explained in the previous subsection. Denote by $\Gamma$ (resp. $h \in H_n(\Gamma;\mathbb{Z})$) the fundamental group (resp. the fundamental class) of $M(T_1,...,T_m)$. 
We will say that $h$ is the $2\pi$-filling homology class corresponding to $T_1,...,T_m$.

Recall that  intrinsic equivalence for integral currents spaces is defined in  Definition \ref{definition:intrinsic isomorphism}.
The following theorem establishes the asymptotic rigidity of Plateau Dehn fillings as $\injrad(T_i)\to \infty$, which in particular implies the spherical volume analogue of the conjecture of Fujiwara-Manning \cite[Conjecture 1.8, Question 1.9]{FM11}. This behavior is completely analogous to what happens to hyperbolic $3$-dimensional Dehn fillings.

\begin{theo} \label{dehn fillings}
Let $(M,g_{\mathrm{hyp}})$ be a non-compact finite volume oriented hyperbolic manifold with toral cusps, then the following holds. 
\begin{enumerate}
\item
 For any $2\pi$-filling homology class $h$,
$$\spherevol(h) <  \Vol(M,g').$$

\item Consider a sequence of families of tori $T^p_1,...,T^p_m$ such that 
$$\lim_{p\to \infty} \min_{i=1}^m \injrad(T^p_i) =\infty.$$
Let $C_{p,\infty}$ be any spherical Plateau solution for the $2\pi$-filling homology class $h_p$ corresponding to $T^p_1,...,T^p_m$. Then  
$$ \lim_{p\to \infty} \mathbf{M}(C_{p,\infty}) =\lim_{p\to \infty} \spherevol(h_p) = \Vol(M,g'),$$
and $C_{p,\infty}$ subsequentially converges in the intrinsic flat topology to an integral current space which is intrinsically isomorphic to $(M,g')$.
\end{enumerate}
\end{theo}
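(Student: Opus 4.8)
Both parts are handled by the machinery already assembled. Write $N:=M(T_1,\dots,T_m)$, $\Gamma:=\pi_1(N)$, and let $\mathbf{d}$ be the locally $\mathrm{CAT}(0)$ metric on $N$ produced by \cite{FM10}. For $(1)$ the plan is to apply the Besson--Courtois--Gallot inequality $(\ref{bcgse})$--$(\ref{bcgse2})$ to $\mathbf{d}$: the filling cores, along which $\mathbf{d}$ is singular and $N$ fails to be a manifold, have zero $n$-dimensional Hausdorff measure, so the Poisson-type maps $\mathrm{Poisson}_c$ of $(\ref{Poissonc})$ are defined $\Gamma$-equivariantly on $\widetilde N$ for every $c>h(\mathbf{d})$, descend to cycles representing $h$, and the pointwise Jacobian computation underlying $(\ref{bcgse})$ (which only involves the smooth part) gives $\spherevol(h)\le \tfrac{h(\mathbf{d})^n}{2^n n^{n/2}}\Vol(N,\mathbf{d})$. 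By \cite{FM10} (see also \cite{FM11}) the metric $\mathbf{d}$ is nonpositively curved and Riemannian with strictly negative curvature off the cores, and can be chosen so that $h(\mathbf{d})^n\,\Vol(N,\mathbf{d})<(n-1)^n\,\Vol(M,g_{\mathrm{hyp}})$ — the strict inequality reflecting that $\mathbf{d}$ is not a constant-curvature metric, its universal cover carrying the flats spanned by the lifts of the positive-dimensional cores. Since $\Vol(M,g')=\bigl(\tfrac{(n-1)^2}{4n}\bigr)^{n/2}\Vol(M,g_{\mathrm{hyp}})=\tfrac{(n-1)^n}{2^n n^{n/2}}\Vol(M,g_{\mathrm{hyp}})$, assertion $(1)$ follows.

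For the lower bound in $(2)$, fix $p_0\in M$, let $R_p\to\infty$ with $\injrad(T^p_j)>i_{R_p}$, write $N_p:=M(T^p_1,\dots,T^p_m)$, $\Gamma_p:=\pi_1(N_p)$, let $\mathbf{d}_p$ be a Fujiwara--Manning metric as in Lemma \ref{bahoui}, and let $\mathbf{d}'_p$ be the rescaling of $\mathbf{d}_p$ for which the barycenter Jacobian bound of Lemma \ref{astuce} reads $\le 1$ (on the hyperbolic locus this is the distance of $g'=\tfrac{(n-1)^2}{4n}g_{\mathrm{hyp}}$). Lemma \ref{bahoui} provides a ball $\mathcal B_{R_p}\subset N_p$, disjoint from the filling cores, isometric to the hyperbolic ball $B(p_0,R_p)\subset M$. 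Given $(K,\psi)\in\mathscr P(h_p)$ with $\psi(K)\subset\mathbb S_{\mathrm{finite}}/\Gamma_p$ (Remark \ref{what simi}), Lemma \ref{star to finite bis} makes the composition $\tilde{\mathrm{bar}}_p:=\mathrm{bar}_p\circ\mathcal A$ well defined on $\psi(K)$, and $\tilde{\mathrm{bar}}_p\circ\psi:K\to(N_p,\mathbf{d}'_p)$ has degree one (Remark \ref{topo degree}). On $\tilde{\mathrm{bar}}_p^{-1}(\mathcal B_{R_p})$ the Jacobian of $\tilde{\mathrm{bar}}_p$ with respect to $\mathbf{d}'_p$ is $\le 1$ by Lemma \ref{astuce} applied to the hyperbolic metric, so the area-formula argument of the proof of Theorem \ref{dim3} gives $\Vol(K,\psi^*\mathbf{g}_{\mathrm{Hil}})\ge\Vol(\mathcal B_{R_p},\mathbf{d}'_p)=\Vol(B(p_0,R_p),g')$. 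Hence $\spherevol(h_p)\ge\Vol(B(p_0,R_p),g')\to\Vol(M,g')$, and together with $(1)$ this proves $\lim_p\spherevol(h_p)=\Vol(M,g')$; by lower semicontinuity of mass $\mathbf{M}(C_{p,\infty})\le\spherevol(h_p)$, so $\limsup_p\mathbf{M}(C_{p,\infty})\le\Vol(M,g')$.

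For the rigidity in $(2)$, pass to a subsequence so that $C_{p,\infty}$ converges in the intrinsic flat topology to some $C^*_\infty=(X^*_\infty,d^*_\infty,S^*_\infty)$, and by a diagonal argument over the minimizing sequences defining the $C_{p,\infty}$ (using Remark \ref{what simi}) choose polyhedral cycles $C_p\in\mathscr C(h_p)$ with $\spt(C_p)\subset\mathbb S_{\mathrm{finite}}/\Gamma_p$, $\mathbf{d}_{\mathcal{F}}(C_p,C_{p,\infty})\to 0$ and $\mathbf{M}(C_p)\to\Vol(M,g')$, so that $C_p\to C^*_\infty$ intrinsically flatly. Since $\lim_p\mathbf{M}(C_p)=\lim_p\Vol(\mathcal B_{R_p},\mathbf{d}'_p)$, the area formula produces, exactly as in Step~1 of the proof of Theorem \ref{uniqueness1}, measurable sets $\Omega_p\subset\spt(C_p)$ mapped by $\tilde{\mathrm{bar}}_p$ into $\mathcal B_{R_p}$, with $\lim_p\mathbf{M}(C_p\llcorner\Omega_p)=\lim_p\mathbf{M}(C_p)$, on which $\tilde{\mathrm{bar}}_p$ is almost injective and has Jacobian tending uniformly to $1$. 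Because $\mathcal B_{R_p}$ is disjoint from the cores where $\mu$ is supported, $\tilde{\mathrm{bar}}_p$ satisfies a uniform local Lipschitz bound on $\tilde{\mathrm{bar}}_p^{-1}(\mathcal B_{R_p})$, so — in contrast with Theorem \ref{uniqueness1}, where a shrinking ball $N_{d_k}$ around a basepoint had to be excised — here the nested exhaustion is simply by the balls $B(p_0,\rho_k)\subset M$ with $\rho_k\nearrow\infty$. Fixing $\rho_k$, letting $p\to\infty$ (so that eventually $R_p>\rho_k$), and setting $D_{k,p}:=C_p\llcorner\bigl(\Omega_{p,\tilde r}\cap\tilde{\mathrm{bar}}_p^{-1}(B(p_0,\rho_k))\bigr)$, the currents $D_{k,p}$ subconverge to an integral current space $D_k$, and, using the injectivity of $\ell^\infty(\overline{B(p_0,\rho_k)})$ as in Step~2 of Theorem \ref{uniqueness1}, one obtains a locally $1$-Lipschitz map $\tilde{\mathrm{bar}}_{k,\infty}:\spt(D_k)\to\overline{B(p_0,\rho_k)}\subset(M,g')$ with $(\tilde{\mathrm{bar}}_{k,\infty})_\#(D_k)=\llbracket 1_{\overline{B(p_0,\rho_k)}}\rrbracket$. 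Steps~3 and~4 of the proof of Theorem \ref{uniqueness1} then go through: the $\eta$-chain property (via slicing, coarea, and the constancy theorem) shows $\spt(D_k)$ is compact and $\tilde{\mathrm{bar}}_{k,\infty}$ is a bijective isometry for the intrinsic metrics; the $D_k$ are nested with $\bigcup_k\spt(D_k)$ of full $\|S^*_\infty\|$-measure; and gluing the maps yields a bijective local intrinsic isometry $\bigcup_k\spt(D_k)\to M$ pushing $S^*_\infty$ onto $\llbracket 1_M\rrbracket$. Hence $\mathbf{M}(S^*_\infty)=\Vol(M,g')$ and $(X^*_\infty,L_{d^*_\infty})$ is isometric to $(M,g')$ with matching current structures, i.e.\ $C^*_\infty$ is intrinsically equivalent to $(M,g')$; lower semicontinuity of mass then also gives $\liminf_p\mathbf{M}(C_{p,\infty})\ge\Vol(M,g')$, so $\lim_p\mathbf{M}(C_{p,\infty})=\Vol(M,g')$, which completes $(2)$.

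The main obstacle is the rigidity part of $(2)$: one has to re-run the long argument of Theorem \ref{uniqueness1} with the two limiting parameters — the index inside each minimizing sequence and the injectivity radii $p$ — intertwined, and with the target not a fixed closed hyperbolic manifold but the finite-volume $(M,g')$ exhausted by the expanding balls $\mathcal B_{R_p}$. Organizing the diagonal extraction so that the error terms in the $\eta$-chain and coarea estimates vanish in the correct order, and checking that the pieces $D_k$ nest and exhaust $C^*_\infty$ in measure, is where the real work lies; by comparison the input needed from \cite{FM10} for $(1)$ — that the Fujiwara--Manning metric has $h(\mathbf{d})^n\,\Vol(N,\mathbf{d})$ strictly below the hyperbolic value — is comparatively soft.
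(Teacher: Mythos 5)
Your treatment of part (2) follows the paper's route: barycenter map via Lemma \ref{star to finite bis} and Remark \ref{topo degree}, Jacobian bound on the growing hyperbolic ball $\mathcal B_R$, and then the Theorem \ref{uniqueness1} machinery with the exhaustion $M=\bigcup_k\mathcal B_{\rho_k}$ replacing the punctured-manifold exhaustion. The one slip is invoking Lemma \ref{astuce} for the Jacobian bound; that lemma concerns closed locally symmetric manifolds with everywhere-smooth universal cover, and what is actually needed here is Lemma \ref{astuce4}, the variant tailored to the singular $\mathrm{CAT}(0)$ fillings with $\mathscr O'=\tilde M(T_1,\dots,T_m)\setminus\Xi$, whose bound is $1+\eta$ (not $1$) on $\mathcal B_{R_\eta}$. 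That is cosmetic and easily fixed.

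Part (1) is where there is a genuine gap, and it is also where you depart from the paper most substantially. Your plan is to run the BCG inequality $(\ref{bcgse})$ on the Fujiwara--Manning pseudomanifold $(N,\mathbf d)$ itself, getting $\spherevol(h)\le \tfrac{h(\mathbf d)^n}{2^n n^{n/2}}\Vol(N,\mathbf d)$, and then to assert that $h(\mathbf d)^n\Vol(N,\mathbf d)<(n-1)^n\Vol(M,g_{\mathrm{hyp}})$ as a "soft" consequence of \cite{FM10,FM11}. That second step is not established. Fujiwara--Manning do not prove (and as far as I know nothing in the literature proves) a comparison of the normalized entropy volume $h(\mathbf d)^n\Vol(N,\mathbf d)$ with that of the cusped hyperbolic $M$; the heuristic "strictness reflects the presence of flats" does not amount to a proof, and the two quantities could in principle compete in either direction since the cone regions near the cores may add volume even as the flats lower entropy. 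In addition, the first step quietly uses the closed-smooth-manifold inequality $(\ref{bcgse})$ on a singular pseudomanifold, which would require justification beyond "the cores have measure zero" (one must control the Poisson maps and their Jacobians through the $\mathrm{CAT}(0)$ singularities).

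The paper avoids both of these issues by never putting a Poisson map on the filling at all. It works with the Poisson maps $\mathrm{Poisson}_c$ for the genuinely hyperbolic $(\tilde M,g_{\mathrm{hyp}})$, whose pullback metrics converge to $g'$, composes with the distance non-increasing map $\Theta:\mathcal S(\pi_1(M))/\pi_1(M)\to\mathcal S(\Gamma)/\Gamma$ induced by the filling epimorphism $\pi_1(M)\twoheadrightarrow\Gamma$, truncates the cusps and caps the resulting tori by cones of small mass. This gives $\spherevol(h)\le\Vol(M,g')$ directly. Strictness is then obtained by contradiction: if equality held, $\|\mathcal D_{c,x}(v)\|\to 1$ uniformly on compacta (using the uniform second-derivative bound on $\Theta\circ\mathrm{Poisson}_c$), so a short closed geodesic-curvature-one loop representing a nontrivial element of $\ker(\pi_1(M)\to\Gamma)$ would map to a null-homotopic loop in $\mathcal S(\Gamma)/\Gamma$ that is almost arclength-parametrized with bounded second derivative; lifting to the Hilbert sphere and taking the loop length to zero yields a contradiction. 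That argument is tied to the kernel of the filling homomorphism and bears no resemblance to an entropy-times-volume comparison; you should adopt it (or supply a complete proof of your claimed inequality, which I do not think exists yet).
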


\begin{proof}[Outline of proof]
Let us start with the strict inequality (1). 
Recall that 
$$\Gamma:=\pi_1(M(T_1,...,T_m)).$$
Let $\tilde{M}$ be the universal cover of $M$ endowed with the hyperbolic metric  $g:=g_{\mathrm{hyp}}$.  
Set
$$S_2(\tilde{M},g) := \{u\in L^2(\tilde{M},g); \quad \|u\|_{L^2} =1\}$$
and denote by $\lambda_{(\tilde{M},g)}$ the natural $\pi_1(M)$-action on this sphere.
The fundamental group $\pi_1(M)$ naturally surjects onto $\Gamma$ by (\ref{surject fund}), and as in Subsections \ref{some distance non increasing maps},
there is a natural distance non-increasing map 
$$\Theta : S_2(\tilde{M},g) /\lambda_{(\tilde{M},g)}(\pi_1(M)) \to   {S^\infty}/\lambda_{\pi_1(M)}(\pi_1(M))\to  {S^\infty}/\lambda_\Gamma(\Gamma)$$
where the first map is obtained by averaging on translates of a fundamental domain.

An explicit sequence of admissible immersions from $M$ to $S_2(\tilde{M},g) /\lambda_{(\tilde{M},g)}(\pi_1(M))$ whose image has volume converging to $\Vol(M,\frac{(n-1)^2}{4n}g)$ is given by immersions defined as follows.
Let $\tilde{\varphi}: [0,\infty) \to (0,1)$ be a nondecreasing smooth function such that the function
$$\tilde{\rho}_y(.):= \tilde{\varphi}(\dist_{g}(y,.))$$
is smooth and coincides with $\dist_{g}(y,.)$ outside of the $1$-neighborhood of $y\in \tilde{M}$.
For each $c> n-1$, set 
$$\tilde{\alpha}_c :x\mapsto \tilde{\alpha}_{c,x} := e^{-\frac{c}{2}\tilde{\rho}_x(.)} \in L^2(\tilde{M},{g}).$$ By homogeneity the norm $\|\tilde{\alpha}_{c,x}\|_{L^2}$ does not depend on $x\in \tilde{M}$.
Define the corresponding immersion by 
$$\overline{\mathcal{P}}_c : (\tilde{M},g') \to S_2(\tilde{M},g)$$
$$ x\mapsto \overline{\mathcal{P}}_{c,x} = \frac{\tilde{\alpha}_{c,x}}{\|\tilde{\alpha}_{c,x}\|_{L^2}} . $$
This map is $\pi_1(M)$-equivariant and descends to a map from $M$ to $S_2(\tilde{M},g) /\lambda_{(\tilde{M},g)}(\pi_1(M))$. 
By well-known properties of hyperbolic spaces and their compactifications \cite[Subsection 2.6]{BCG95}, as $c\to n-1$, for any unit vector $v\in T_x\tilde{M}$,
$$\lim_{c\to n-1}\|d_x\overline{\mathcal{P}}_{c,x}(v)\|^2_{L^2}=  \lim_{c\to n-1} \frac{c^2}{4} \int_{\tilde{M}}  |d_x\tilde{\rho}_y(v)|^2 \overline{\mathcal{P}}^2_{c,x}(y) dvol_{g}(y) = \frac{(n-1)^2}{4n}.$$ 
The convergence is uniform on $T\tilde{M}$.
In particular, the pull-back metric converges to 
$$g':=  \frac{(n-1)^2}{4n}g$$ as $c\to n-1$ and
\begin{equation}\label{pluss}
\lim_{c\to n-1} \Vol(M,(\Theta \circ \overline{\mathcal{P}}_c)^*\mathbf{g}_\mathrm{Hil}) = \Vol(M,g').
\end{equation} 

Let $\theta:\pi_1(M) \to \Gamma$ be the natural surjective homomorphism and fix a fundamental domain $D\subset \tilde{M}$.
Consider the immersion 
$$\Theta \circ \overline{\mathcal{P}}_c : M\to   {S^\infty}/\lambda_\Gamma(\Gamma).$$
Unwinding the definitions, we get for $x\in \tilde{M}$, $\gamma\in \Gamma$:
$$\Theta \circ \overline{\mathcal{P}}_{c,x} (\gamma) = \big[\sum_{\tau\in \theta^{-1}(\gamma)} \int_{\tau.D} \frac{\tilde{\alpha}_{c,x}^2}{\|\tilde{\alpha}_{c,x}\|_{L^2}^2}   dvol_{g}\big]^{1/2}.$$
Form this expression, one checks that $\Theta \circ \overline{\mathcal{P}}_c$ has uniformly bounded second derivatives in $x$ as $c\to n-1$.

Let $h\in H_n(\Gamma;\mathbb{Z})$ be the $2\pi$-filling homology class corresponding to $T_1,...,T_m$.
The push-forward current $(\Theta \circ \overline{\mathcal{P}}_c)_\sharp \llbracket 1_M\rrbracket $ is in general not an element of $\mathscr{C}(h)$ since $M$ is noncompact. Nevertheless given any $\eta>0$, we can consider toral cusps $\tilde{E}_1,...,\tilde{E}_j$ respectively far enough inside $E_1,...,E_j$ and ``close'' the image tori $\Theta \circ \overline{\mathcal{P}}_c(\partial \tilde{E}_j)$ without adding much volume (by the cone construction in $ {S^\infty}$), to obtain a new admissible Lipschitz map 
$$\Psi : M(T_1,...,T_m) \to  {S^\infty}/\lambda_\Gamma(\Gamma)$$
such that $\Psi_\sharp(\llbracket 1_{M(T_1,...,T_m)}\rrbracket) \in \mathscr{C}(h)$ and 
\begin{equation}\label{goofy}
\mathbf{M}(\Psi_\sharp(\llbracket 1_{M(T_1,...,T_m)}\rrbracket)) \leq \Vol(M,(\Theta \circ \overline{\mathcal{P}}_c)^*\mathbf{g}_\mathrm{Hil}) +\eta.
\end{equation}
Taking $\eta\to 0$, (\ref{pluss}) and the above inequality already imply
$$\spherevol(h) \leq  \Vol(M,g')$$

Suppose towards a contradiction that $\spherevol(h) = \Vol(M,g')$. 
Then (\ref{goofy}) implies that if $\mathcal{D}_{c,x}$ is the differential of $\Theta \circ \overline{\mathcal{P}}_c:(M,g')\to   {S^\infty}/\lambda_\Gamma(\Gamma)$ at $x\in M$, we must have for any unit norm tangent vector $v\in T_xM$
\begin{equation} \label{cn1D}
\lim_{c\to n-1}\|\mathcal{D}_{c,x}(v)\| =1
\end{equation}
and the convergence is uniform on compact sets in $ M$ since the second derivatives of $\Theta \circ \overline{\mathcal{P}}_c$ are uniformly bounded.

For any small enough $l>0$, there is a smooth closed curve $a:[0,l]\to (M,g')$ of length $l$ parametrized by arclength with $a(0)=a(1)$, such that 
\begin{itemize}
\item $a:[0,l]\to M$ to represents an element in the non-empty kernel $\ker(\theta)$ of $\theta :\pi_1(M) \to \Gamma,$
\item at any point of the closed curve $a$, the geodesic curvature is equal to $1$.
\end{itemize}
From what we said above, the image of 
$$\Theta \circ \overline{\mathcal{P}}_c \circ a : [0,1] \to  {S^\infty}/\lambda_\Gamma(\Gamma) $$
is a loop contained in $ {S^\infty}/\lambda_\Gamma(\Gamma)$ which is homotopically trivial (hence it lifts isometrically to the sphere $ {S^\infty}$) and has length at most $2l$ for all $c$ close enough to $n-1$. Moreover the norm of the second derivative of $\Theta \circ \overline{\mathcal{P}}_c \circ a $ is uniformly bounded from above. By (\ref{cn1D}), the norm of the differential of $\Theta \circ \overline{\mathcal{P}}_c \circ a$ converges to $1$ uniformly as $c\to n-1$. 
But if $l$ is small enough, it is impossible to have an almost arclength parametrized closed curve from $[0,l] $ to a Hilbert unit sphere with second derivatives bounded independently of $l$. This contradiction finishes the proof of the strict inequality.

As for (2), our proof uses the barycenter map in the same fashion as the proofs of Theorems  \ref{theorem:bcg}, \ref{dim3}, \ref{uniqueness1} and \ref{uniqueness2}.
Let $(M(T^{p}_1,...,T^{p}_m), \mathbf{d}_p)$ be a sequence of $2\pi$-fillings where 
\begin{equation}\label{tends to infini}
\lim_{p\to \infty} \min_{k=1}^m \injrad(T^p_k) = \infty.
\end{equation}
For each $p$, let 
$$\Gamma_p:=\pi_1(M(T^p_1,...,T^p_m)),$$
 let $h_p\in H_n(\Gamma_p;\mathbb{Z})$ be the corresponding homology classes and let $C_{p,\infty}$ be a spherical Plateau solution for $h_p$. By Wenger's compactness theorem \cite[4.19]{SW11}, one can assume that $C_{p,\infty}$ converges in the intrinsic flat topology to some limit integral current space
$$W=(X_W,d_W,S_W).$$ 
As a spherical Plateau solution, $C_{p,\infty}$ is the intrinsic flat limit of a minimizing sequence $\{C_{p,i}\}_{i\geq 0} \subset \mathscr{C}(h_p)$. 


From now on, we will consider
$$\mathbf{d}'_p:= \frac{(n-1)^2}{4n} \mathbf{d}_p.$$
In the sequel, Jacobians, lengths and distances will be computed with respect to $\mathbf{d}'_p$ on $M(T^p_1,...,T^p_m)$.
As we briefly explained earlier, there is a well-defined barycenter map
$$\mathrm{Bar} : \mathbb{S}^+/\lambda_{\Gamma_p}(\Gamma_p) \to  M(T^p_1,...,T^p_m)$$
 associated with 
$\Gamma_p$, $(\tilde{M}(T^p_1,...,T^p_m),\mathbf{d}'_p)$ (here $\mathbb{S}^+$ depends on $\Gamma_p$.).
As before, using Lemma \ref{approx polyh}, all the $C_{p,i}$ can be assumed to be polyhedral chains without loss of generality.
For each $p$ and each $i$, one can show that
$$({\mathrm{Bar}}_p)_\sharp(C_{p,i}) = \llbracket 1_{M(T^p_1,...,T^p_m)} \rrbracket$$
where $\llbracket 1_{M(T^p_1,...,T^p_m)} \rrbracket$ is the integral current of $M(T^p_1,...,T^p_m)$ representing the fundamental class $[M(T^p_1,...,T^p_m)]\in H_n(M(T^p_1,...,T^p_m);\mathbb{Z})$.

Given $\epsilon>0$, let $R$ be such that if
\begin{equation} \label{>iR}
\min_{k=1}^m \injrad(T^p_k)> i_R,
\end{equation} then for the ball $\mathcal{B}_R \subset M(T^p_1,...,T^p_m)$ as in Lemma \ref{bahoui}, 
\begin{equation} \label{vol/2}
\Vol(\mathcal{B}_{R/2},g') > \Vol(M,g')-\epsilon/2
\end{equation}
where $g':=  \frac{(n-1)^2}{4n}g$. 
As in Theorems \ref{theorem:bcg} and \ref{dim3}, the Jacobian bound for the barycenter maps and the area formula imply that for all $R$ large,
$$\mathbf{M}(C_{p,i}) > \Vol(M,g') - \epsilon.$$
Combined with the upper bound shown in (1), this already proves that 
\begin{equation} \label{limite hp}
\lim_{p\to \infty} \spherevol(h_p) = \Vol_n(M,g').
\end{equation}

It remains to explain why the intrinsic flat limit $W=(X_W,d_W,S_W)$ of $\{C_{p,\infty}\}$ is intrinsically isomorphic to $(M,g')$.
The proof is based on arguments similar to those  of Theorem \ref{uniqueness1} and Theorem \ref{uniqueness2}. 
By Lemma \ref{bahoui}, the pseudomanifolds $(M(T^p_1,...,T^p_m),\mathbf{d}'_p)$ contain larger and larger geodesic balls $\mathcal{B}_R$ isometrically contained in $(M,g')$ and
$$M = \bigcup_{R>0} \mathcal{B}_{R}.$$
 Repeating the arguments of Theorem \ref{uniqueness1},
we conclude that there is a region $Z\subset X_W$ which is path connected, and if $L_{d_W}$ (resp. $L_{g'}$) denotes the path metric induced by $d_W$ (resp. $g'$),
$$(Z,L_{d_W}) \text{ is isometric to } (M,g')$$
via a ``limit barycenter map'' 
$${\mathrm{Bar}}_\infty : Z\to M$$
such that $({\mathrm{Bar}}_\infty)_\sharp S_W\llcorner Z = \llbracket 1_M\rrbracket$. Moreover by lower semicontinuity of the mass, $\mathbf{M}(W) \leq \Vol(M,g')$ so by the above isometry ${\mathrm{Bar}}_\infty $, necessarily $X_W = Z$. This finishes the proof.

\end{proof}


More details for the proof of Theorem \ref{dehn fillings} appeared in the earlier preprint \cite{Antoine22}.
  
Theorem \ref{dehn fillings} provides many examples of sequences of spherical Plateau solutions $\{C_{p,\infty}\}$ which ``accumulate'' towards a limit. Note that this accumulation occurs from below in the sense that the mass of each $C_{p,\infty}$ is strictly less than the mass of the limit. The set of simplicial volumes \cite{Gromov82} of closed oriented manifolds is well-ordered in dimension $2$ and $3$, but not  in higher dimensions, see \cite{HL21}.
There is also a conjecture (due to Harold Rosenberg?) stating that the set of areas of closed minimal surfaces in the round $3$-sphere is well-ordered. 
By Theorem \ref{theorem:bcg}, Theorem \ref{dim3} and \cite[Proposition 3.9]{BCG91}, the set of spherical volumes of closed oriented manifolds is well-ordered  in dimensions $2$ and $3$.
All these elements suggest the following question:
\begin{question}[Well-ordering]
Is the set of spherical volumes of closed oriented manifolds well-ordered in any dimension?
\end{question}

In dimensions at least $4$, it is possible that the Plateau Dehn fillings we constructed have supports which are non-smooth, but are smooth on large domains, due to the $\epsilon$-regularity theorem in \cite{ADLS18}. In fact, no example of singular spherical Plateau solution is known:
\begin{question}[Singularity]
Is there a closed oriented manifold with a singular spherical Plateau solution?
\end{question}

 The situation treated in Theorem \ref{dehn fillings} is very special due to the existence of a model hyperbolic manifold and the availability of barycenter maps. 
Considering that there are group theoretic versions of Dehn fillings developed in \cite{Osin07,GM08}, it would be desirable to investigate Plateau Dehn fillings beyond the case of cusped hyperbolic manifolds.
\begin{question} [Convergence phenomenon]
Are there other instances where ``convergence'' of a sequence of pairs $(\Gamma_i,h_i)$ implies convergence of the corresponding spherical volumes and spherical Plateau solutions?
\end{question}

Theorem \ref{dehn fillings} involves a non-compact hyperbolic manifold $M$, whose fundamental group $\pi_1(M)$ surjects onto the fundamental groups of Dehn fillings $M(T_1,...,T_m)$. Note that here, $M$ does not determine a nontrivial group homology class due to its non-compactness, but we can nevertheless interpret the spherical Plateau problems for the Dehn fillings $M(T_1,...,T_m)$ as spherical Plateau problems for $M$ with respect to orthogonal representations obtained by composing of the fundamental group surjections and the regular representations. Those remarks point to the following problem:
\begin{question} [Generalization]
Can the spherical Plateau problem be extended to non-compact manifolds and general orthogonal representations?
\end{question}

\bibliographystyle{alpha}
\bibliography{biblio_22_10_11}

\end{document}